\newcommand{\cm}[1]{#1}
\def\clem#1{{#1}}
\newcommand{\Clock}[5][shift={(0,0)}]%
{\begin{scope}[#1,scale={#2/5},transform shape]
\fill[color=cyan!30] (0,0) circle (5);
\foreach \a in {1,2,...,60}
   \draw[black] ($(6*\a:4.5)$) -- ($(6*\a:5)$);
\foreach \a in {5,10,...,60} \draw[black,fill,line width={#2*.4pt}]
      ($(6*\a:3.5)$) circle (2pt) -- ($(6*\a:5)$);
\foreach \a in {1,2,...,12}
   \node[text=black,fill=blue!50]
        at ($(90-\a*30:4.3)$) {\Large \bf \a};
\draw[->,>=stealth',line width={#2*1.3pt}]
   (0,0) -- ($(450-30*#3-0.5*#4-0.0083*#5:3.25)$);
\draw[line width={#2*1.1pt},gray]
   (0,0) -- ($(450-6*#4-0.1*#5:4.2)$);
\draw[->,>=stealth',line width={#2*.7pt}]
   (0,0) -- ($(450-6*#4-0.1*#5:4.45)$);
\draw[>->,>=stealth',line width={#2*.6pt},color=gray!80!blue]
   ($(270-6*#5:1)$) -- ($(450-6*#5:4.4)$);
\draw[black,line width={#2*.2pt},fill=gray] (0,0) circle (.2);
\draw[black,fill] (0,0) circle (.1);
\draw[black,line width={#2*.6pt}] (0,0) circle (5);
\end{scope}}
\newtheorem{theorem}{Theorem}[section]
\newtheorem{lemma}[theorem]{Lemma}
\newtheorem{proposition}[theorem]{Proposition}
\newtheorem{corollary}[theorem]{Corollary}
\newtheorem{remark}[theorem]{Remark}
\newtheorem{definition}[theorem]{Definition}
\newtheorem{assumption}[theorem]{Assumption}
\definecolor{light-gray}{gray}{0.95}
\def\centerarc[#1](#2)(#3:#4:#5){\draw[#1] ($(#2)+({#5*cos(#3)},{#5*sin(#3)})$) arc (#3:#4:#5);}
\numberwithin{equation}{section}
\def\be{\begin{equation}}
\def\ee{\end{equation}}
\def\bbT{{\mathbb T}}
\def\Cc{\mathcal{C}}
\def\p{\partial}
\def\d{{\rm d}}
\def\Pr{\varpi}
\newcommand{\mc}[1]{{\mathcal #1}}
\newcommand{\bb}[1]{{\mathbb #1}}
\newcommand{\eps}{\varepsilon}
\newcommand{\R}{\mathbb R}
\newcommand{\Z}{\mathbb Z}
\newcommand{\N}{\mathbb N}
\renewcommand{\P}{\mathbb P}
\newcommand{\T}{\mathbb T}
\newcommand{\E}{\mathbb E}
\renewcommand{\hat}{\widehat}
\title[Convergence of a degenerate microscopic dynamics to the PME]{Convergence of a degenerate microscopic dynamics to the porous medium equation}
\author{Oriane Blondel}
\address{Univ Lyon, CNRS, Universit\'e Claude Bernard Lyon 1, UMR5208, Institut Camille Jordan, F-69622 Villeurbanne, France}
\email{blondel@math.univ-lyon1.fr}
\author{Cl\'ement Canc\`es}
\address{Inria, Univ. Lille, CNRS, UMR 8524 - Laboratoire Paul Painlev\'e, F-59000 Lille}
\email{clement.cances@inria.fr}
\author{Makiko Sasada}
\address{Graduate School of Mathematical Sciences, University of Tokyo, 3-8-1, Komaba, Meguro-ku, Tokyo, 153--8914, Japan}
\email{sasada@ms.u-tokyo.ac.jp}
\author{Marielle Simon}
\address{Inria, Univ. Lille, CNRS, UMR 8524 - Laboratoire Paul Painlev\'e, F-59000 Lille}
\email{marielle.simon@inria.fr}
\begin{document}

\begin{abstract}
We derive the porous medium equation from an interacting particle system which belongs to the family of exclusion processes, with nearest neighbor exchanges. The particles follow a degenerate dynamics, in the sense that the jump rates can vanish for certain configurations, and there exist blocked configurations that cannot evolve. 
In \cite{GLT} it was proved that the macroscopic density profile in the hydrodynamic limit is governed by the porous medium equation (PME), for initial densities uniformly bounded away from $0$ and $1$.
In this paper we consider the more general case where the density can take those extreme values. 
In this context, the PME solutions display a richer behavior, like moving interfaces, finite speed of propagation and breaking of regularity.
As a consequence, the standard techniques that are commonly used to prove this hydrodynamic limits cannot be straightforwardly applied to our case. We present here a way to generalize the \emph{relative entropy method}, by involving approximations of solutions to the hydrodynamic equation, instead of exact solutions. 
\end{abstract}
\maketitle

{\color{red} 
\fbox{\begin{minipage}{0.9\textwidth}
{\Large \begin{center}\bf CAREFUL: this version is uncomplete.  \end{center}

There is one step missing, which makes the proof fail. We are currently trying to fill the gap. For any question or remark, please contact one of the authors.
}
\end{minipage}}}

\section{Introduction}

The derivation of \emph{macroscopic} partial differential equations from \emph{microscopic} interacting particle systems has aroused an intense research activity in the past few decades. 
In particular, the family of conservative interacting particle systems with exclusion constraints is rich enough to provide significant results. One aims at showing that the macroscopic density profile for these models  evolves under time rescaling according to some deterministic partial differential equation (PDE). The space-time scaling limit procedure which is at play here is called \emph{hydrodynamic limit}. The simplest example in that family is the \emph{symmetric simple exclusion process} (SSEP), for which the macroscopic hydrodynamic equation is the linear heat equation \cite[Chapter 2.2]{kl}. The purpose of this article is to present a new tool for the derivation of the hydrodynamic limit, when the macroscopic PDE belongs to the class of nonlinear diffusion equations which are not parabolic.

In \cite{GLT}, Gon\c{c}alves \emph{et al.} designed an exclusion process with \emph{local kinetic constraints}, in order to obtain the \emph{porous medium equation} (PME) as the macroscopic limit equation. The class of kinetically constrained lattice gases has been introduced in the physical literature in the 1980's (we refer to \cite{bt,rs} for a review) and is used to model liquid/glass transitions. The PME is a partial differential equation which reads in dimension one as 
\begin{equation}
\label{eq:pmeintro}
\partial_t\rho = \partial_{uu}(\rho^m),
\end{equation}
\cm{and here we assume that $m$ is a positive integer which satisfies $m\geqslant 2$}. The PME belongs to the class of diffusion equations, with diffusion coefficient $D(\rho)=m\rho^{m-1}$. Since $D(\rho)$ vanishes as $\rho \to 0$, the PME is not parabolic, and its solutions can be compactly supported at each fixed time, the boundary of the positivity set $\{\rho>0\}$ moving at finite speed.  
Another important feature is that if the initial condition $\rho^{\rm ini}$ of \eqref{eq:pmeintro} is allowed to vanish, then the solution $\rho(t,u)$ can have gradient discontinuities across the interfaces which separate the positivity set $\{\rho >0\}$ from its complement. 
We refer to the monograph \cite{vaz} for an extended presentation of the mathematical properties of the PME.

\bigskip

We consider in this paper the particle system introduced in \cite{GLT}. Let us describe it in the case $m=2$ (see \eqref{eq:LN} for the general definition). The setting is one-dimensional and periodic: particles are distributed on the points of the finite torus of size $N$ denoted by $\T_N=\bb Z/N\bb Z$. We impose the exclusion restriction: no two particles can occupy the same site. 
A particle at $x$ jumps to an empty neighboring site, say $x+1$, at rate $2$ if there are particles at $x-1$ and $x+2$, at rate $1$ if there is only one particle in $\{x-1,x+2\}$, and rate $0$ else. The jump rate from $x+1$ to $x$ is given by the same rule.

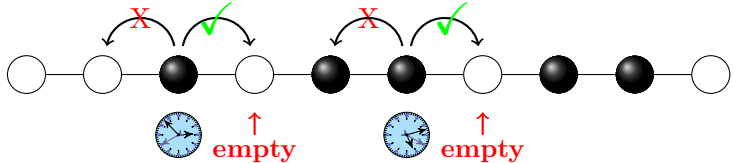
\begin{figure}
\centering
\begin{tikzpicture}
\draw[color=white] (3,-1) -- (3,2);
\centerarc[thick,<-](2.5,0.3)(10:170:0.45);
\centerarc[thick,<-](5.5,0.3)(10:170:0.45);
\centerarc[thick,->](1.5,0.3)(10:170:0.45);
\draw (1.5,0.75) node[thick,red] {X};
\draw (2.5,0.75) node[thick, green] {\LARGE\checkmark};
\draw (4.5,0.75) node[thick,red] {X};
\draw (5.6,0.75) node[thick, green] {\LARGE\checkmark};
\centerarc[thick,->](4.5,0.3)(10:170:0.45);

\draw (0,0) -- (9,0);

\shade[ball color=black](4,0) circle (0.25);
\shade[ball color=black](2,0) circle (0.25);
\shade[ball color=black](5,0) circle (0.25);
\shade[ball color=black](8,0) circle (0.25);
\shade[ball color=black](7,0) circle (0.25);

\filldraw[fill=white, draw=black]
(0,0) circle (.25)
(1,0) circle (.25)
(3,0) circle (.25)
(6,0) circle (.25)
(9,0) circle (.25);

\draw[thick,->,red] (3,-0.8) -- (3,-0.5);
\draw (3,-1.3) node[anchor=south] {\small \color{red} \bf empty};

\draw[thick,->,red] (6,-0.8) -- (6,-0.5);
\draw (6,-1.3) node[anchor=south] {\small \color{red} \bf empty};

\Clock[shift={(2,-0.8)}]{0.3}{2}{52}{40};
\Clock[shift={(5,-0.8)}]{0.3}{5}{12}{20};

\end{tikzpicture}
\caption{Allowed jumps are denoted by {\color{green}$\checkmark$}. Forbidden jumps are denoted by {\color{red}X}.}
\label{fig:jumps}
\end{figure}

As explained in \cite{GLT}, this constrained exclusion process permits to derive the PME \eqref{eq:pmeintro} with $m=2$, when the process is accelerated in the diffusive time scale $tN^2$. However, in that paper the authors need to assume that the initial profile $\rho^{\rm ini}$ is uniformly bounded away from 0 and 1, namely that it satisfies an ellipticity condition of the form $0<c_- \leqslant \rho^{\rm ini} \leqslant c_+ < 1$. With this assumption, the PME is uniformly parabolic and in particular does not display its more interesting features: finite speed of propagation and gradient discontinuities. The authors in \cite{GLT} manage to circle around the problem by perturbing the microscopic dynamics with a slowed SSEP. This way, they gain ergodicity of the Markov process and can derive the PME using the well-known \emph{entropy method} introduced in \cite{GPV}. 

In this paper, we do not assume the ellipticity condition on $\rho^{\rm ini}$ and we keep the original model described above. We believe this is the first derivation of a moving boundary problem from a conservative and degenerate microscopic dynamics (see \cite{free1,free2} for derivations in non-conservative or non-degenerate settings). One advantage of keeping the original degenerate dynamics is that one may think about studying the boundary of the positive set microscopically, even if the definition of the \emph{microscopic boundary} is absolutely not obvious and would need precaution. The microscopic behaviour of that moving interface (such that its speed, or fluctuation, for instance), as well as the relationship between the microscopic boundary and the macroscopic boundary, would be very interesting future works.

 Our choice of initial condition makes the entropy method and the \emph{relative entropy method} fail (these techniques are explained in detail in \cite{kl}). Indeed, the lack of ergodicity breaks any hope to use the entropy method and the special features of the PME are a serious obstacle to using the relative entropy method. Let us explain why and describe how we manage anyway here.

The relative entropy method was introduced for the first time by Yau \cite{yau}, and its main idea is the following: since the particle system has a family of product invariant measures indexed by the density (here, the Bernoulli product measure $\nu_\rho^N$), one can use the non-homogeneous product measures $\nu_{\rho(t,u)}^N$ with slowly varying parameter associated with the solution $\rho(t,u)$ to \eqref{eq:pmeintro}, and compare it to the state at macroscopic time $t$ of the diffusively accelerated Markov process. The latter is denoted below by $\mu_t^N$, it is a probability law on $\{0,1\}^{\T_N}$. If one expects the PME to be the correct hydrodynamic equation, these two measures should be close, and this can be seen from the investigation of the time evolution of the relative entropy $H(\mu_t^N | \nu_{\rho(t,u)}^N)$ \cm{(see \eqref{eq:hn} for the definition)}. 

In our case, two obstacles appear straight away. The first one is that $\rho(t,u)$ can take values $0$ and $1$, and therefore the above relative entropy will generally be infinite. Indeed, Yau designed this method for the linear heat equation $\partial_t \rho = \Delta \rho$ which can be wisely rewritten as $\partial_t \rho = \partial_u ( \rho(1-\rho) \partial_u (f(\rho)))$, with $f(\rho)=\log(\rho/(1-\rho))$ being  the macroscopic entropy \cite{GL}. Therefore, the application of Yau's method to the derivation of the PME is also based on the reformulation of $\partial_t \rho = \partial_{uu}(\rho^m)$ into $\partial_t \rho = \partial_{u} (m\rho^m(1-\rho) \partial_u (f(\rho)))$, with the same function $f$ which degenerates at $\rho=0$ and $\rho=1$. We note however that $f$ is \textit{not} the natural physical entropy for the PME, as explained in \cite{Otto}. The second one is that the solution $\rho(t,u)$ has poor analytic properties as soon as $\rho^{\rm ini}$ vanishes, which will complicate the control of the time evolution of the entropy. To remove these obstacles, we modify the original investigation by considering an approximation of $\rho(t,u)$, denoting ahead by $\rho_N(t,u),$ which satisfies two important properties: \begin{enumerate}[(i)]
\item it is bounded away from 0 and 1 and regular;
\item the sequence $(\rho_N)$ uniformly converges to $\rho$ on compactly supported time intervals.
\end{enumerate}
As we will see in the text, these two properties are not enough to apply straightforwardly Yau's method: we also need sharp controls on several derivatives of $\rho$. Moreover, the usual \emph{one-block estimate} (which is at the core of the relative entropy method) requires understanding the interface between the positivity set of $\rho$ and its complement, and needs very refined additional arguments. These are the main ingredients of our proof.

Let us note that the relative entropy is a tool that has been widely used in various contexts in the past forty years. 
Without being exhaustive, let us list a few applications of the relative entropy in the study of PDEs. 
It was introduced simultaneously by DiPerna \cite{Dip} and Dafermos \cite{Daf} in order to show a 
weak-strong uniqueness principle for the entropy solutions to nonlinear hyperbolic systems of conservation laws. 
It has then been used in  \cite{CarTos,Otto} to quantify the convergence 
of the solutions to the porous medium equation (set on the whole space $\R^d$) towards Barenblatt (or ZKB) self-similar profiles. 
Furthermore, it has been one of the fundamental tools in the derivation of hydrodynamic limits from Boltzmann equation 
\cite{BGL, GSR, StR}. In \cite{StR}, the author manages to extend previous results by considering an approximation 
of the solution instead of the true solution, in the same spirit as what we are doing here.
It was also used to justify rigorously reduced model obtained by asymptotic limits, like for instance in \cite{LT} 
where the relative entropy method was used to study the long-time diffusive regime of hyperbolic systems with stiff relaxation, 
or in~\cite{MN} where compressible flows in thin domains were studied. 
Finally, very recent works \cite{JR, CMS, GHMN} make use of the relative entropy in order to get error estimates for numerical approximation of PDEs.

Up to our knowledge, the present contribution is the first application of the relative entropy method to derive a hydrodynamic limit with degenerate intervals and without smoothness of the solution. To overcome that difficulty, we need to use an approximation of the solution to the hydrodynamic equation, instead of the true solution, which is the main novelty of this work.
Finally, note that the idea of plugging an approximation of the solution into the relative entropy method should certainly apply to other degenerate particle systems and allow to derive other degenerate parabolic equations. The additional work with respect to what we present here would be to derive the corresponding analytic estimates on the solution to the \cm{macroscopic equation} (see mainly Proposition~\ref{lem:pos} and the estimates in Section~\ref{ssec:properties}). The complexity of this program in higher dimensions is the reason we kept $d=1$.
 
\bigskip

Here follows an outline of the paper.  In Section \ref{sec:replacement}, we introduce and define the model with its notations, and we state our hydrodynamic limit result (Theorem \ref{theo:main}).
In Section \ref{sec:porous}, we start with recalling some specificities of the solutions to the porous medium equation, then we give a crucial property of the boundary of the positivity set. 
We also define an approximation of the solution $\rho_N$ and study its convergence. 
Finally we expose the strategy of the proof of the hydrodynamic limit through the control of $H(\mu_t^N | \nu_{\rho_N(t,u)}^N)$, which generalizes the usual relative entropy method. 
The estimates that we need about the derivatives of $\rho_N$ are proved in Section \ref{ssec:properties}. The proof of the hydrodynamic limit, and in particular the one-block estimate, is completed in Section \ref{sec:relative}.

\section{Hydrodynamics limits}\label{sec:replacement}

\subsection{Context}\label{sec:context}
Let us introduce with more details the microscopic dynamics which was first given in \cite{GLT}, and which we described in the introduction in the case $m=2$. For any $x\in\T_N$, we set $\eta(x)=1$ if $x$ is occupied, and $\eta(x)=0$ if $x$ is empty, which makes our state space $\{0,1\}^{\T_N}$.
The dynamics can be entirely encoded by the infinitesimal generator $\mathcal{L}_N$ which acts on  functions $f:\{0,1\}^{\T_N} \to \R$ as 
\begin{equation} \label{eq:LN}
\mathcal{L}_Nf(\eta):=\sum_{\substack{x,y\in\T_N\\|x-y|=1}}r_{x,y}(\eta)\eta(x)(1-\eta(y))\big(f(\eta^{x,y})-f(\eta)\big),
\end{equation}
where  \[r_{x,x+1}(\eta)=r_{x+1,x}(\eta)=\sum_{y=x-m+1}^x \prod_{\substack{z=y\\{z\notin\{ x,x+1\}}}}^{y+m}\eta(z),\]
and 
\[ \eta^{x,y}(z) = \begin{cases} \eta(y) & \text{ if } z=x, \\
\eta(x) & \text{ if } z = y, \\ 
\eta(z) & \text{ otherwise}.
\end{cases} \] 
For instance, when $m=2$, the jump rate reads 
\[ r_{x,x+1}(\eta) = \eta(x-1)+\eta(x+2),\]
and when $m=3$ it reads
\[ r_{x,x+1}(\eta)= \eta(x-2)\eta(x-1)+\eta(x-1)\eta(x+2)+\eta(x+2)\eta(x+3). 
\]
The initial configuration is random, distributed according to some initial probability measure $\mu_0^N$ on $\{0,1\}^{\T_N}$. We denote by $(\eta_t^N)_{t \geqslant 0}$ the Markov process generated by $N^2 \mathcal{L}_N$ (note that it is equivalent to accelerate time by a factor $N^2$) and starting from the initial state $\mu_0^N$. For any fixed $t\geqslant 0$, the probability law of $\{\eta_t^N(x)\; ; \; x\in\T_N\}$ on the state space $\{0,1\}^{\T_N}$ is denoted by $\mu_t^N$.

In the following we also denote by $\P_{\mu_0^N}$ the probability measure on the space of trajectories $\mathcal{D}(\R_+,\{0,1\}^{\T_N})$ induced by the initial state $\mu_0^N$ and the accelerated Markov process  $(\eta_t^N)_{t\geqslant 0}$. Its corresponding expectation is denoted by $\E_{\mu_0^N}$. 

\subsection{Product Bernoulli measures}

For any $\alpha \in [0,1]$, let $\nu_\alpha^N$ be the Bernoulli product measure on $\{0,1\}^{\T_N}$ with marginal at site $x \in\T_N$ given by
\[
\nu_\alpha^N\big\{\eta \; : \; \eta(x)=1\big\}=\alpha.
\]
In other words, we put a particle at each site $x$ with probability $\alpha$, independently of the other sites. Similarly, we define $\nu_\alpha$ as the Bernoulli product measure on $\{0,1\}^{\bb Z}$. 
We denote by $E_\alpha$ the expectation with respect to $\nu_\alpha$, and note that $E_\alpha[\eta(0)]=\alpha$. One can easily check \cm{(using the fact that $r_{x,x+1}=r_{x+1,x}$ for any $x$)} that the product measures $\{\nu_\alpha^N \; ; \; \alpha \in [0,1]\}$ are \emph{reversible} for the Markov process $(\eta_t^N)$. 

As the size $N$ of the system goes to infinity, the discrete torus $\T_N$ tends to the full lattice $\Z$. Therefore, we will need to consider functions on the space $\{0,1\}^{\Z}$. Let  $\varphi:\{0,1\}^{\Z}\to \bb R$ be a \emph{local} function, in the sense that $\varphi(\eta)$ depends on $\eta$ only through a finite number of coordinates, and therefore $\varphi$ is necessarily bounded. We then denote by $\overline \varphi(\alpha)$ its average with respect to the measure $\nu_\alpha$: \[
\overline \varphi(\alpha):=E_\alpha[\varphi(\eta)].
\]
Note that $\alpha  \mapsto \overline \varphi(\alpha)$ is continuous for every local function $\varphi$.

The one-dimensional continuous torus is denoted by $\T=\R / \Z$. Let us now define the \emph{non-homogeneous product measure} $\nu^N_{\rho(\cdot)}$ on $\{0,1\}^{\T_N}$ associated with a density profile $\rho:\T\to [0,1]$, whose marginal at site $x\in\T_N$ is given by 
\begin{equation}\label{eq:nurhoN}
\nu_{\rho(\cdot)}^N\big\{ \eta\; : \; \eta(x)=1 \big\} = 1 - \nu_{\rho(\cdot)}^N\big\{ \eta\; : \; \eta(x)=0 \big\} = \rho\big(\tfrac x N \big).  
\end{equation}
We  denote by $\E^N_{\rho(\cdot)}$ the expectation with respect to $\nu_{\rho(\cdot)}^N$. If $\rho(\cdot)$ is continuous on $\T$ and if $\varphi:\{0,1\}^\Z \to \R$ is local, then the following Riemann convergence holds:
\begin{equation}\label{eq:rieman}\frac1N \sum_{x\in\T_N} \E_{\rho(\cdot)}^N\big[ \tau_x\varphi(\eta) \big] \xrightarrow[N\to\infty]{} \int_\T E_{\rho(u)} \big[\varphi(\eta)\big] \; \d u=\int_{\T} \overline\varphi\big(\rho(u)\big)\; \d u.\end{equation}
Moreover, if a sequence of continuous profiles $\rho_N(\cdot)$ converges uniformly to $\rho(\cdot)$ on $\T$, then 
\begin{equation}\label{eq:rieman-unif}\frac1N \sum_{x\in\T_N} \E_{\rho_N(\cdot)}^N\big[ \tau_x\varphi(\eta) \big] \xrightarrow[N\to\infty]{} \int_\T E_{\rho(u)} \big[\varphi(\eta)\big] \; \d u.\end{equation}
The last convergence property will be used several times in the paper.

\subsection{Statement of the main result}

Let $\rho^{\rm ini}\in L^\infty(\T;[0,1])$ be an initial density profile. 
Our goal is to consider the hydrodynamic limit of the microscopic dynamics described in Section~\ref{sec:context}. 
As already pointed out by Gon\c{c}alves {\em et al.}~\cite{GLT}, the underlying macroscopic equation is expected to be the {\em porous medium equation} (PME)
\begin{equation}\label{eq:PME}
\begin{cases}
\p_t \rho = \p_{uu}(\rho^m) & \text{in}\; \R_+\times \T, \\
\rho_{|_{t=0}}=\rho^{\rm ini}&\text{in}\; \T. 
\end{cases}
\end{equation}
This equation is of degenerate parabolic type. 
It is well known that the notion of strong solution ---i.e., $\rho \in \mathcal{C}^{1,2}(\R_+ \times \T)$---
is not suitable to get the well-posedness of the problem~\eqref{eq:PME} 
unless $\rho^{\rm ini}$ remains bounded away from $0$. Indeed, the space derivative 
of $\rho$ may be discontinuous at the boundary of the set $\{\rho>0\}$ (see for instance~\cite{vaz}).
This motivates the introduction of the following notion of weak solutions.
\begin{definition}\label{def:weak_sol}
A function $\rho \in L^\infty(\R_+ \times \T; [0,1])$ is said to be a weak solution to~\eqref{eq:PME} 
corresponding to the initial profile $\rho^{\rm ini}$ if  $\p_u (\rho^m) \in L^2(\R_+\times\T)$ 
and
\be\label{eq:weak}
\iint_{\R_+ \times \T}\rho \; \p_t \xi\; \d u \d t + \int_\T \rho^{\rm ini} \;\xi(0,\cdot) \d u - \iint_{\R_+ \times \T} \p_u (\rho^m) \p_u \xi \; \d u \d t = 0, \quad 
\text{for all } \xi \in C^1_c(\R_+ \times \T).
\ee
\end{definition}

What we call a weak solution corresponds to what is called an energy solution in Vazquez' monograph (see \cite[Section 5.3.2]{vaz}).
The classical existence theory based on compactness arguments (see for instance \cite[Therorem 5.5]{vaz}) can be extended to our periodic 
setting without any difficulty. The uniqueness of the weak solution  and the fact that they remain bounded between $0$ and $1$ are consequences of the following 
$L^1$-contraction/comparison principle~(see \cite[Proposition 6.1]{vaz}): let $\rho^{\rm ini}$ and $\check \rho^{\rm ini}$ be two initial profiles in $L^\infty(\T;[0,1])$, and let $\rho$ 
and $\check \rho$ be corresponding weak solutions, then
\begin{equation}\label{eq:comparison}
\int_\T (\rho(t,u) - \check \rho(t,u))^+ \d u \leq\int_\T (\rho^{\rm ini}(u) - \check \rho^{\rm ini}(u))^+ \d u, \qquad \text{for any } t \ge0, 
\end{equation}
where $a^+ = \max(a,0)$ denotes the positive part of $a$. In the above relation, we have used the fact that any weak solution to~\eqref{eq:PME}
belongs to $\mc C(\R_+;L^1(\T))$ (see for instance~\cite{CG11}). 

\clem{
As it will appear in the sequel, the so-called {\em pressure}, denoted by ${\Pr}$ in what follows, plays an important role. It is related to the 
density $\rho$ by the monotone relation \[{\Pr} = {\Pr}(\rho)= \frac{m}{m-1} \rho^{m-1}.\] The equation~\eqref{eq:PME} then rewrites 
\[
\p_t \rho - \p_u \left(\rho \p_u {\Pr}\right) = 0.
\]
We denote ${\Pr}^{\text{ini}}= \frac{m}{m-1} \left(\rho^{\text{ini}}\right)^{m-1}$, and} we now state our assumption on the initial condition.
 \begin{assumption}[The initial profile] \label{ass:ini} We assume that:
\begin{itemize}
\item  The initial pressure profile ${\Pr}^{\rm ini}$ is Lipschitz continuous, namely there exists $C_{\rm Lip}>0$ such that 
\begin{equation} \big\| \partial_u {\Pr}^{\rm ini}\big\|_{\infty} \leqslant C_{\rm Lip},\label{eq:lip}\end{equation}
where $\|\cdot \|_\infty$ denotes the usual $L^\infty$-norm;
\item The initial positivity set 
\begin{equation}
\label{eq:Pini} \mathcal{P}_{0}: = \big\{u \in\T \; : \; \rho^{\rm ini}(u)>0\big\}
\end{equation} 
has a finite number of connected components.
\end{itemize}
\end{assumption}
Note that this assumption is less restrictive than the one given in \cite{GLT}, where $\rho^{\rm ini}$ was supposed to be uniformly bounded away from 0 and 1. In particular, we authorize vanishing initial profiles. Our main result reads as follows:

 \begin{theorem}\label{theo:main}
 We assume that the initial microscopic configuration $\{\eta_0(x)\; : \; x \in\T_N\}$ is distributed according to $\mu_0^N= \nu^N_{\rho^{\rm ini}(\cdot)}$, \cm{with $\rho^{\rm ini}$ satisfying Assumption \ref{ass:ini}}.
 
 Then, the following \emph{local equilibrium convergence} holds at any macroscopic time $t>0$: for any continuous function $G:\T \to \R$, any local function $\varphi:\{0,1\}^\Z \to \R$
 \begin{equation}\lim_{N\to\infty}\E_{\mu_0^N} \bigg[\bigg|\frac1N \sum_{x\in\T_N}G\Big(\frac x N\Big)\tau_x\varphi\big(\eta_t^N\big) - \int_{\T} G(u)\overline\varphi\big(\rho(t,u)\big) \d u \bigg|\bigg] =0,  \label{eq:localequil}\end{equation}
 where $\rho$ is the unique weak solution of \eqref{eq:PME} in the sense of Definition~\ref{def:weak_sol}. 

\end{theorem}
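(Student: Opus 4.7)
The plan is to establish Theorem \ref{theo:main} through a generalized version of Yau's relative entropy method. The central obstruction is that the ordinary relative entropy $H(\mu_t^N \mid \nu^N_{\rho(t,\cdot)})$ is identically infinite as soon as $\rho(t,\cdot)$ attains the extreme values $0$ or $1$, which is generic under Assumption \ref{ass:ini}. I would therefore replace $\rho$ by the regularized profile $\rho_N(t,u)$ announced in the introduction, satisfying the properties (i)--(ii) listed there: $\rho_N$ is strictly bounded between some $\eps_N$ and $1-\eps_N$, it is smooth enough in $(t,u)$ for the Taylor expansions below to make sense, and $\rho_N \to \rho$ uniformly on $[0,T]\times \T$. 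The proof then reduces to showing that
\[
H_N(t) := H\bigl(\mu_t^N \,\big|\, \nu^N_{\rho_N(t,\cdot)}\bigr) = o(N),
\]
since \eqref{eq:localequil} then follows from the entropy inequality, the large deviation bounds for Bernoulli product measures, and the Riemann-type convergence \eqref{eq:rieman-unif} applied to $\rho_N$.

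The first analytic step is to differentiate $H_N(t)$. Denoting by $\Psi_t^N$ the density of $\nu^N_{\rho_N(t,\cdot)}$ with respect to a fixed reference Bernoulli product measure, Yau's inequality gives
\[
\frac{d}{dt} H_N(t) \leq \int \left( N^2 \frac{\mathcal{L}_N \Psi_t^N}{\Psi_t^N} - \partial_t \log \Psi_t^N \right) d\mu_t^N.
\]
A discrete integration by parts, combined with a second-order Taylor expansion of $\log(\rho_N/(1-\rho_N))$ around each bond $\{x,x+1\}$, rewrites this upper bound as a sum $\sum_x V_x^N(\rho_N(t,\cdot),\tau_x\eta) + R_N(t)$, where $R_N(t)$ is controlled by the higher-order spatial derivatives of $\rho_N$ and by $\partial_t \rho_N - \partial_u(\rho_N \partial_u \Pr(\rho_N))$. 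All these quantities can be made $o(N)$ after integration, \emph{provided} we have the sharp analytic bounds on $\rho_N$ whose proof is the subject of Section \ref{ssec:properties}. The choice of $\rho_N$ must be made carefully enough so that its regularization rate $\eps_N$, its Lipschitz constant, and its failure to satisfy the exact PME are all compatible with keeping $R_N(t)/N \to 0$.

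The second and most delicate step is the one-block estimate that replaces $V_x^N$ by its local equilibrium average under $\nu_{\rho_N(t,x/N)}$. In the classical setting it relies on a uniform spectral-gap or log-Sobolev bound for the restriction of the generator to microscopic blocks; here both estimates degenerate because on blocks that are entirely empty or entirely full the dynamics is frozen. I would therefore split the torus into two regions. On a \emph{bulk region}, where $\rho_N$ is bounded away from $0$ and $1$ by a uniform (in $x$) constant, the strategy of \cite{GLT} applies and gives a quantitative one-block estimate. On the complementary \emph{boundary layer}, identified via the structural analysis of the positivity set $\mathcal P_s$ and of $\{\rho=0\}$ carried out in Section~\ref{sec:porous} (and illustrated by Figures~\ref{fig:ps1}--\ref{fig:ps3}), I would use the crude deterministic bounds $|V_x^N|\leq C$, the smallness of the Lebesgue measure of this layer, and the uniform convergence $\rho_N \to \rho$ to show that its global contribution is $o(N)$. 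The ruling out of pathological sprouting of new components of $\{\rho=0\}$ is what makes this boundary layer well-behaved, in particular ensures it is a disjoint union of a bounded number of intervals whose total length shrinks with $N$.

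Once the differential inequality $\tfrac{d}{dt}H_N(t) \leq C H_N(t) + o(N)$ is in hand, Gronwall yields $H_N(t) = o(N)$ uniformly on $[0,T]$. Feeding this into the entropy inequality applied to $f = \pm \lambda \sum_{x\in\T_N} G(x/N)\,\tau_x\varphi(\eta)$ and optimizing over $\lambda$, together with \eqref{eq:rieman-unif} for $\rho_N$ and the uniform convergence $\rho_N \to \rho$, gives \eqref{eq:localequil}. The step I expect to be most technically demanding is precisely the boundary-layer part of the one-block estimate: balancing the vanishing parabolicity of the rates against the smallness of $\eps_N$, while keeping all remainders $o(N)$, is what will dictate the admissible rate of approximation of $\rho$ by $\rho_N$ and is ultimately what makes it essential to insert an approximate profile rather than the true weak solution.
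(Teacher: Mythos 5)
Your overall plan is the same as the paper's: replace $\rho$ by a regularized profile $\rho_N$ bounded between $\eps_N$ and $1-\eps_N$, show that $H(\mu_t^N\mid\nu^N_{\rho_N(t,\cdot)})=o(N)$ via Yau's differential inequality and Gronwall, and then upgrade to local equilibrium by the entropy inequality, Riemann-type convergence, and a large-deviation bound. Two points are worth flagging, one minor and one a genuine gap.

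The minor point: you write that the remainder in the entropy production is controlled in part by $\partial_t\rho_N - \partial_u(\rho_N\partial_u\Pr(\rho_N))$, implicitly allowing $\rho_N$ to only approximately solve the PME. The paper takes $\rho_N$ to be the \emph{exact} solution of the PME with regularized initial data $\rho_N^{\rm ini}$, so this error term is identically zero; the entire burden is shifted to the initial entropy $\mc H_N(0)$ and to norm estimates on derivatives of $\rho_N$. Your more general choice is possible but would add a term to control; the paper's choice is cleaner and is what makes Lemma~\ref{l:smallentropy} the natural companion estimate.

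The genuine gap is in the one-block estimate. You split $\T_N$ into a bulk region, where $\rho_N$ is bounded below, and a complementary ``boundary layer,'' and you claim the latter has small Lebesgue measure and can be killed by the crude bound $|V_x^N|\leq C$. That claim is false: the complement of the bulk contains the whole set $\{\rho=0\}$, which under Assumption~\ref{ass:ini} is typically a union of macroscopic intervals of total length $\Theta(1)$, independent of $N$ and of $\delta$. The finiteness of the number of connected components (Lemma~\ref{lem:connect}) gives you $\mathrm{Leb}(\Gamma_t)=0$ and $\mathrm{Leb}(\Gamma_t(\delta))\to 0$ as $\delta\to 0$, but it says nothing about the size of $\{\rho=0\}$ itself. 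Consequently the crude $|V_x^N|\le C$ times the measure of the complement of the bulk does \emph{not} give $o(N)$. The paper instead performs a \emph{three-way} split of $\T_N$ into $G_t^{N,\ell}$ (good, $\rho_N\ge\delta$ on the block), $Z_t^{N,\ell}$ (almost zero, $\rho_N\le\delta_N$ on the block), and $B_t^{N,\ell}$ (bad). The crude $O(\mathrm{Leb})$ argument is applied only to $B_t^{N,\ell}$, whose measure is bounded by $\mathrm{Leb}(\Gamma_t(\delta))$ by Lemma~\ref{lem:incl}. The set $Z_t^{N,\ell}$, which you leave unaccounted for, requires a separate argument: although the dynamics is frozen there, the block is empty with overwhelming probability under $\nu^N_{\rho_N(t,\cdot)}$ because $\rho_N\to 0$ uniformly on that region, and $V_{\ell,\psi}$ vanishes on the empty block. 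This is handled by the explicit moment-generating-function computation
\[
\int\exp\Big(\gamma C\sum_{x\in Z_t^{N,\ell}(\delta_N)}\mathbf 1_{\{\exists |y|\le\ell+\ell_0\colon\eta(x+y)=1\}}\Big)\nu_{\delta_N}(\d\eta)
\le\Big(\delta_N\big(e^{\gamma C(2\ell+2\ell_0+1)}-1\big)+1\Big)^N,
\]
which is what makes the contribution $o(N)$ after dividing by $\gamma N$ and using $\delta_N\to 0$. Without this third regime your proposal cannot close, because the blocked-and-empty region is controlled neither by mobility nor by Lebesgue smallness.
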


\begin{remark}\label{Barenblatt}
The porous medium equation \eqref{eq:PME} admits fundamental solutions, which are usually called Barenblatt (or ZKB) solutions. An explicit form for the Barenblatt solution is
\[
\rho^{B}(t,x)=t^{-\frac{1}{m+1}}\left( \left( C- \frac{(m-1)x^2}{2m(m+1)t^{\frac{2}{m+1}}}\right)^+\right)^{\frac{1}{m-1}}
\]
for each $C>0$. In particular, for sufficiently small $t>0$, its support is contained in $\T$ and $\rho^{ini}(\cdot):=\rho^B(t,\cdot)$ satisfies Assumption \ref{ass:ini}.
\end{remark}

\section{Strategy of the proof}\label{sec:porous}
 
 Let us give here some properties of the solution to the PME to be used in the sequel. 
 Sometimes we prove the results only partially, and we invite the reader to check the details 
 of the proofs in the monograph  \cite{vaz}  written by J.L.Vazquez. 
 Precise references will be given for each result. 
 
 If the porous medium equation starts with an initial profile which vanishes, then the solution at any later time can have discontinuous gradients across the \emph{interfaces} at which the function becomes positive. This is a problem when one tries to prove  hydrodynamic limits.
 The best way to tackle discontinuity problems is to  slightly perturb the initial condition, by making it positive, and bounded away from 1. 

In Section \ref{ssec:pme-orig}, we state some properties of the PME starting from an initial profile which can lead to singularities at positive times. In Section \ref{ssec:pme-modif} we modify the initial condition so as to regularize the solution of the PME and gain better control estimates. In Section \ref{ssec:strategy} we expose the strategy to prove Theorem \ref{theo:main}. 

In the following we denote by  $\|\cdot\|_p$ the usual $L^p$-norm, whenever the integration spaces are clear to the reader. Otherwise, the $L^p(\Omega)$-norm will be denoted by $\|\cdot\|_{L^p(\Omega)}$.

\subsection{The porous medium equation (PME)} \label{ssec:pme-orig}

 We start with recalling some properties of the unique weak solution $\rho(t,u)$ to \eqref{eq:PME}. 
Our first statement is related to the continuity of the weak solutions to the porous medium equation. 
 Such a regularity result can be deduced from \cite[Section 7.7 and Section 15.1]{vaz}. It is also a straightforward consequence of the forthcoming Proposition \ref{prop:unif}.

 \begin{proposition}[Regularity of the solution] \label{prop:regul-ast} The unique weak solution to \eqref{eq:PME} is continuous on $\R_+\times \T$, \cm{and the corresponding pressure $\Pr=\frac{m}{m-1}\rho^{m-1}$ is Lipschitz continuous.} 

 \end{proposition}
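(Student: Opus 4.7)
The plan is to obtain both statements by propagating the Lipschitz bound on the initial pressure through a regularized approximation, and then taking a limit. The key observation is that, formally, the pressure $\Pr = \frac{m}{m-1}\rho^{m-1}$ satisfies the nonlinear equation
\[
\partial_t \Pr = (m-1)\Pr\, \partial_{uu}\Pr + (\partial_u \Pr)^2,
\]
so that $v := \partial_u \Pr$ satisfies, after differentiating once more in $u$,
\[
\partial_t v - (m-1)\Pr\, \partial_{uu} v - (m+1)\, v\, \partial_u v = 0,
\]
a quasilinear equation to which the maximum principle applies, giving $\|v(t,\cdot)\|_\infty \leq \|v(0,\cdot)\|_\infty \leq C_{\rm Lip}$ for all $t \geq 0$.

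First, I would regularize the initial datum: pick $\varepsilon > 0$ and set $\rho^{\rm ini}_\varepsilon := (1-2\varepsilon)\rho^{\rm ini} + \varepsilon$, then mollify slightly in $u$, keeping Lipschitz continuity of the pressure $\Pr^{\rm ini}_\varepsilon$ with a constant bounded by $C_{\rm Lip}$ (up to $o(1)$ as $\varepsilon\to 0$). Since $\rho^{\rm ini}_\varepsilon$ is bounded away from $0$ and $1$ and smooth, the Cauchy problem \eqref{eq:PME} with this initial datum is uniformly parabolic on a short interval, and classical parabolic theory yields a unique smooth solution $\rho_\varepsilon$. On this smooth solution, the computation above for $v_\varepsilon = \partial_u \Pr_\varepsilon$ is rigorous, and the maximum principle yields $\|\partial_u \Pr_\varepsilon\|_{L^\infty(\R_+\times\T)} \leq C_{\rm Lip}$, uniformly in $\varepsilon$. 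Since $\rho_\varepsilon$ remains bounded between $\varepsilon$ and $1-\varepsilon$ by the comparison principle applied to the constant solutions, the uniform parabolicity persists globally in time, so the bound extends to all $t \ge 0$.

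Next, I would pass to the limit $\varepsilon \to 0$. By the $L^1$-contraction estimate \eqref{eq:comparison} applied to $\rho_\varepsilon$ and the weak solution $\rho$ of \eqref{eq:PME}, we have $\rho_\varepsilon(t,\cdot) \to \rho(t,\cdot)$ in $L^1(\T)$ for every $t \geq 0$, and hence $\Pr_\varepsilon \to \Pr$ in the same sense. Combined with the uniform $L^\infty$ and spatial Lipschitz bound on $\Pr_\varepsilon$, the Arzelà–Ascoli theorem (in space, for each fixed $t$) upgrades this to uniform convergence in $u$, and the limit $\Pr(t,\cdot)$ inherits the Lipschitz bound $\|\partial_u \Pr(t,\cdot)\|_\infty \leq C_{\rm Lip}$ for all $t \geq 0$.

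It remains to handle the time variable. Using the weak formulation \eqref{eq:weak} with test functions localized in space, and the now-established spatial Lipschitz bound on $\Pr$ (which gives $\|\partial_u(\rho^m)\|_\infty \leq C$), one controls $\partial_t \rho$ in $H^{-1}$ uniformly, from which an Aubin–Lions-type argument yields equicontinuity of $t\mapsto \rho(t,\cdot)$ with values in, say, $C(\T)$. Combined with the spatial Hölder continuity of $\rho = \big(\tfrac{m-1}{m}\Pr\big)^{1/(m-1)}$ inherited from the Lipschitz character of $\Pr$, this gives joint continuity of $\rho$ on $\R_+\times \T$. Joint Lipschitz continuity of $\Pr$ then follows from the spatial bound together with the pressure equation, using that $\Pr\,\partial_{uu}\Pr$ is bounded in a distributional sense thanks to the Aronson–Bénilan inequality $\partial_{uu}\Pr \geq -C/t$. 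The main delicate point is ensuring that the Lipschitz constant for $\partial_u \Pr_\varepsilon$ does not blow up as $\varepsilon\to 0$, which is why the regularization must be chosen to respect the pressure, not just the density; beyond that, everything reduces to standard stability of \eqref{eq:PME}.
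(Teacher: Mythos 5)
Your overall strategy matches the paper's: regularize the initial datum so that the PME becomes uniformly parabolic, derive the quasilinear equation satisfied by $v = \partial_u\Pr$, invoke the maximum principle to propagate the Lipschitz bound, then pass to the limit using the $L^1$-contraction and a compactness argument. The paper carries out this program in Propositions~\ref{prop:max}, \ref{prop:unif} and \ref{lem:Lip}, and the equation
\[
\partial_t f_N - \partial_u\!\left(m(\rho_N)^{m-1}\partial_u f_N + f_N^2\right)=0
\]
in \eqref{eq:f_N} is, once expanded using $m\rho_N^{m-1}=(m-1)\Pr_N$ and $f_N=\partial_u\Pr_N$, exactly your equation $\partial_t v - (m-1)\Pr\,\partial_{uu}v - (m+1)v\,\partial_u v = 0$. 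Your compactness-in-time step (Aubin--Lions / $H^{-1}$ bound on $\partial_t\rho$) is a legitimate variant of the paper's fractional-Sobolev interpolation estimate; both deliver the needed equicontinuity.

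There is, however, a genuine gap in your regularization step. You set $\rho^{\rm ini}_\varepsilon := (1-2\varepsilon)\rho^{\rm ini}+\varepsilon$ (then mollify in $u$) and claim this keeps $\|\partial_u\Pr^{\rm ini}_\varepsilon\|_\infty \le C_{\rm Lip}+o(1)$. For $m\ge 3$ this is false: writing $r=\rho^{\rm ini}$, one has
\[
\partial_u\Pr^{\rm ini}_\varepsilon = (1-2\varepsilon)\left(\frac{(1-2\varepsilon)r+\varepsilon}{r}\right)^{m-2}\partial_u\Pr^{\rm ini},
\]
and the amplification factor $((1-2\varepsilon)r+\varepsilon)/r)^{m-2}$ blows up as $r\to 0^+$. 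Concretely, if $\rho^{\rm ini}(u)=u^{1/(m-1)}$ near $u=0$ (so $\Pr^{\rm ini}$ is affine and Lipschitz), then $\partial_u\Pr^{\rm ini}_\varepsilon(u)\to\infty$ as $u\to 0^+$ for every fixed $\varepsilon>0$. The density-level affine regularization does not commute with the nonlinear map $\rho\mapsto\Pr$. The paper avoids this by regularizing at the \emph{pressure} level: it truncates $\Pr^{\rm ini}$, convolves with a mollifier, and only then takes the $(m-1)$-th root to recover $\rho_N^{\rm ini}$ (see \eqref{eq:rho-ini}); this does preserve $\|\partial_u\Pr_N^{\rm ini}\|_\infty\le C_{\rm Lip}$, cf.~\eqref{eq:Lip-ini-reg}. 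You flag this issue in your closing remark (``the regularization must be chosen to respect the pressure, not just the density''), but the regularization actually written in the body of the proof contradicts that caveat and would invalidate the uniform estimate you need. Finally, your appeal to the Aronson--B\'enilan inequality for joint Lipschitz continuity of $\Pr$ would need more work: $\partial_{uu}\Pr\ge -C/t$ is one-sided and gives only an $L^1_u$ bound on $\partial_{uu}\Pr$ via periodicity, hence not directly a pointwise-in-$u$ bound on $\partial_t\Pr$ through the pressure equation; fortunately, only the \emph{spatial} Lipschitz continuity of $\Pr$ is used downstream in the paper (proof of Lemma~\ref{lem:connectbis}), and that part of your argument is fine once the regularization is fixed.
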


\def\Pp{\mathcal{P}}

Let us denote by $\mathring{A}$ the interior of the subset $A\subset \mathbb{T}$ and by $\overline{A}$ its closure. For all $t\geqslant 0$, we denote by 
$$
\mathcal{P}_t := \big\{u \in \bbT\; : \;\rho(t,u) >0 \big\}
$$
the positivity set of  $\rho(t,\cdot)$, which is an open subset of $\bbT$ since $\rho(t,\cdot)$ is continuous.
Finally we denote by 
\begin{equation}\label{eq:gamma}
\Gamma_t := \partial\mathcal{P}_t = \overline{\mathcal{P}_t} \setminus \mathcal{P}_t
\end{equation}
the {\em interface} between the  positivity set $\mathcal{P}_t$ of $\rho(t,\cdot)$ and the complementary 
\begin{equation}\label{eq:Z}\mathcal{Z}_t: = \overbrace{\left\{u \in \bbT\; : \; \rho(t,u) =0 \right\}}^{\circ} = \bbT \setminus \overline{\mathcal{P}_t}\end{equation}
of its support. 
Note that $\Gamma_t$ is closed, and is a nowhere dense set, but it can \emph{a priori} have positive Lebesgue measure. Actually, we will prove in Lemma \ref{lem:connect} below that from our assumption \eqref{eq:Pini} on $\mc P_{0}$, this does not happen and that the Lebesgue measure of $\Gamma_t$ vanishes for any $t>0$.
 \begin{remark} 
Let us underline that the derivatives of the pressure $\Pr$ can have jump discontinuities on the so-called \emph{free boundary}
$$\bigcup_{t\in(0,T]} \{t\}\times \Gamma_t,$$
but both $\Pr$ and $\rho$ are smooth outside of this set at positive times.  We refer the reader to \cite[Chapter 14]{vaz} for the general theory and also \cite[Chapter 4]{vaz} for several examples.
 \end{remark}
 
In what follows, the notation $\mathrm{Leb}$ stands for the usual Lebesgue measure restricted on $\mathbb{T}$, and  
$|B|$ denotes the cardinality of the discrete subset $B\subset \T_N$.

\begin{proposition}[Positivity intervals]\label{lem:pos}
For any $\delta >0$ and $t \in [0,T]$ we set
\begin{equation}\label{eq:gammadelta}
\Gamma_t(\delta) = \overline{\Big\{ u\in\T \; :\; 0 < \rho(t,u) <\delta\Big\}}.
\end{equation}
We have
\be\label{eq:interf}
\int_0^T \mathrm{Leb} \left(\Gamma_t(\delta)\right) \d t \xrightarrow[\delta\to 0]{}0.
\ee
 \end{proposition}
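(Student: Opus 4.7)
The plan is to reduce the time-integrated statement to a pointwise (in $t$) monotone convergence of $\mathrm{Leb}(\Gamma_t(\delta))$ and to conclude by dominated convergence.

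First, I would observe that the family $(\Gamma_t(\delta))_{\delta > 0}$ is non-decreasing in $\delta$: if $\delta_1 < \delta_2$ then $\{0 < \rho(t,\cdot) < \delta_1\} \subset \{0 < \rho(t,\cdot) < \delta_2\}$, so their closures are nested. The core identity to establish is
$$\bigcap_{\delta > 0} \Gamma_t(\delta) = \Gamma_t, \qquad t \in [0,T].$$
For the inclusion $\Gamma_t \subset \bigcap_\delta \Gamma_t(\delta)$, take $u \in \Gamma_t = \overline{\mathcal{P}_t}\setminus \mathcal{P}_t$: then $\rho(t,u)=0$ and there is a sequence $u_n \to u$ with $u_n \in \mathcal{P}_t$, i.e. $\rho(t,u_n)>0$. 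By the continuity of $\rho$ (Proposition~\ref{prop:regul-ast}), $\rho(t,u_n) \to 0$, hence for every $\delta>0$ the tail of $(u_n)$ lies in $\{0<\rho(t,\cdot)<\delta\}$, so $u\in \Gamma_t(\delta)$. Conversely, given $u \in \bigcap_{\delta>0}\Gamma_t(\delta)$, a diagonal extraction provides a sequence $u_n \to u$ with $0<\rho(t,u_n)\to 0$; continuity of $\rho$ forces $\rho(t,u)=0$, and the approximating sequence lies in $\mathcal{P}_t$, so $u \in \overline{\mathcal{P}_t}\setminus \mathcal{P}_t = \Gamma_t$.

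Since $\mathrm{Leb}(\Gamma_t(\delta)) \leqslant \mathrm{Leb}(\T) = 1 < \infty$, continuity of the Lebesgue measure from above gives the pointwise convergence
$$\mathrm{Leb}(\Gamma_t(\delta)) \xrightarrow[\delta \to 0]{} \mathrm{Leb}(\Gamma_t), \qquad t \in [0,T].$$
I would then invoke the forthcoming Lemma~\ref{lem:connect} (whose statement is advertised right before the proposition), which, under Assumption~\ref{ass:ini}, ensures $\mathrm{Leb}(\Gamma_t) = 0$ for every $t>0$: in one space dimension, $\mathcal{P}_t$ is then a finite union of open arcs of $\T$, so its boundary $\Gamma_t$ consists of finitely many points.

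Finally, since $0 \leqslant \mathrm{Leb}(\Gamma_t(\delta)) \leqslant 1$ uniformly in $t \in [0,T]$ and $\delta>0$, the dominated convergence theorem applied on $[0,T]$ yields
$$\int_0^T \mathrm{Leb}(\Gamma_t(\delta))\,\d t \xrightarrow[\delta \to 0]{} \int_0^T \mathrm{Leb}(\Gamma_t)\,\d t = 0,$$
which is the desired conclusion \eqref{eq:interf}. The main conceptual obstacle is the structural input provided by Lemma~\ref{lem:connect}: without it, the a priori possibility that the free boundary $\Gamma_t$ carries positive one-dimensional measure (which Vazquez' theory allows for very rough initial data) would prevent the pointwise limit from vanishing. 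The remaining steps are soft measure-theoretic arguments relying only on continuity of $\rho$.
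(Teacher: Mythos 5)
Your proof is correct and follows essentially the same route as the paper: invoke Lemma~\ref{lem:connect} to conclude $\mathrm{Leb}(\Gamma_t)=0$, use continuity of the Lebesgue measure from above along the identity $\Gamma_t=\bigcap_{\delta>0}\Gamma_t(\delta)$, and finish by dominated convergence with the trivial bound $\mathrm{Leb}(\Gamma_t(\delta))\leqslant 1$. The only difference is that you supply the (short, correct) verification of the set-theoretic identity via continuity of $\rho$ and a diagonal extraction, which the paper simply asserts without proof.
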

 
 \begin{proof}[Proof of Proposition \ref{lem:pos}] The proof follows from the following technical lemma, which we will prove ahead in Appendix \ref{app:profile}.
 \begin{lemma}[Connected components of the positivity set]\label{lem:connect}
For any $t>0$, $\mc P_t$ has a finite number of connected components.
\end{lemma}
From last lemma, since $\mc P_t$ has a finite number of connected components for any $t>0$, we know that $\Gamma_t$ is a finite union of points,
and therefore
 ${\rm Leb}(\Gamma_t) = 0$.
Since 
\[
\Gamma_t = \bigcap_{\delta >0}\overline{\Big\{ u\in\T \; :\; 0 < \rho(t,u) <\delta\Big\}} = \bigcap_{\delta >0} \Gamma_t(\delta), 
\]
it follows from the monotonicity of the Lebesgue measure that 
\[
0 = {\rm Leb} (\Gamma_t) = \lim_{\delta \to 0} {\rm Leb}(\Gamma_t(\delta)), \qquad \text{for any } t \in [0,T].
\]
Moreover, since $\Gamma_t(\delta) \subset \bbT$, we get that ${\rm Leb}(\Gamma_t(\delta)) \leq 1$ for all $t \in [0,T]$.
Hence \eqref{eq:interf} follows from Lebesgue's dominated convergence Theorem.
\end{proof}

\subsection{The regularized initial condition} \label{ssec:pme-modif}
In order to prove Theorem \ref{theo:main}, we need to introduce a \emph{regularized approximate solution} to the PME. This is the goal of this section.
 
Let $(\varepsilon_N)_{N\in\bb N}$ be a vanishing sequence such that $ \varepsilon_N \in (0,\frac12)$. The \clem{rate} at which $\varepsilon_N \to 0$ will be made more precise later on. 
Let $h\in \mc C^\infty(\R)$ be such that $h \geqslant 0$ and 
\begin{align}
(i) &\quad \mathrm{Supp}(h) \subset (-1,1), \qquad \int_\R h(y)\d y=1, \label{eq:h1}\\
(ii) &\quad h(y)=h(-y), \quad \text{ for any } y \in \R \vphantom{\int} \label{eq:h2}\\
(iii) &\quad \partial_yh(y) \leqslant 0, \quad \text{ if } y \geqslant 0. \label{eq:h3}
\end{align}
Denote $C_h:=\|h\|_\infty$. It follows from \eqref{eq:h2} and \eqref{eq:h3} that $\|\partial_y h\|_1 = 2 C_h$. Let us define
the \emph{regularizing approximation of the unit}:
\[h_N(y) = \varepsilon_N^{-1}\; h(\varepsilon_N^{-1}y), \qquad y \in \R\] which satisfies ${\rm Supp}(h_N)\subset(-\varepsilon_N,\varepsilon_N)$, and also
\begin{equation} 
\big\|h_N\big\|_1 = 1, \qquad \big\|h_N\big\|_\infty = \frac{C_h}{\varepsilon_N}, \qquad \big\|\partial_yh_N\big\|_1 = \frac{2C_h}{\varepsilon_N}.\label{eq:h_n}
\end{equation}
From here several steps are necessary to define the approximate initial data $\rho_N^\text{ini}$. First, we introduce the truncated 
initial density and pressure defined by
 \begin{align*}
 \widetilde \rho_N^{\rm ini} =& \max\big\{\varepsilon_N, \; \min \big(1-\varepsilon_N,\; \rho^{\rm ini}\big)\big\}, 
\\
 \widetilde{{\Pr}}_N^{\rm ini} = &  \frac{m}{m-1} \left(\widetilde \rho_N^{\rm ini}\right)^{m-1} 
 =  \max\bigg\{\frac{m}{m-1}\varepsilon_N^{m-1}, \; \min \Big(\frac{m}{m-1}(1-\varepsilon_N)^{m-1},\; {\Pr}^{\rm ini}\Big)\bigg\}.
 \end{align*}
The \emph{truncated and regularized initial data} are then defined by
\begin{equation}\label{eq:rho-ini}
{\Pr}_N^{\rm ini} = \widetilde{{\Pr}}_N^{\rm ini} \star h_N, \qquad 
\rho_N^{\rm ini} = \left(\frac{m-1}m {\Pr}_N^{\rm ini}\right)^{\frac1{m-1}},
\end{equation}
where $\star$ is the usual convolution product on $\T$. 
This approximation procedure is designed so that the following properties hold:
\begin{enumerate}
\item \textit{Regularity}: $\rho_N^{\rm ini}$ and ${\Pr}_N^{\rm ini}$ are smooth on $\T$;
\item \textit{Boundedness away from $0$ and $1$}: 
 \begin{equation}\label{eq:ini-eps}
 \varepsilon_N \leqslant \rho_N^{\rm ini} \leqslant 1-\varepsilon_N, 
\end{equation}
\item \textit{Lipschitz regularity of the regularized pressure}:
\begin{equation}\label{eq:Lip-ini-reg}
\left\|\p_u {\Pr}_N^{\rm ini}\right\|_\infty \leq C_{\rm Lip},
\end{equation}
where $C_{\rm Lip}$ has been introduced in \eqref{eq:lip}.
\item \textit{Uniform convergence towards the initial profiles}:
\begin{align}
 \left\| {\Pr}_N^{\rm ini} - {\Pr}^{\rm ini} \right\|_\infty &\leq (m+C_{\rm Lip}) \eps_N \xrightarrow[N\to\infty]{} 0
\label{eq:conv_pini}
\\
\big\|\rho_N^{\rm ini} - \rho^{\rm ini} \big\|_\infty &\leqslant C_{\rm ini} \left(\varepsilon_N\right)^{\frac1{m-1}} \xrightarrow[N\to\infty]{} 0. \label{eq:conv}\end{align}
with $C_{\rm ini} = \left( \frac{(m-1)}{m}(m+C_{\rm Lip})\right)^{\frac{1}{m-1}}$. 
\end{enumerate}
Note that \eqref{eq:Lip-ini-reg} and the definition \eqref{eq:rho-ini} imply: 
\begin{equation} \label{eq:Lip-rho}
\left\|\partial_u \rho_N^{\rm ini} \right\|_\infty \leqslant \frac{C_{\rm Lip}}{m} (\varepsilon_N)^{2-m},
\end{equation} therefore $\rho_N^{\rm ini}$ is uniformly Lipschitz only in the case $m=2$. If $m \geqslant 3$ the right hand side above goes to infinity as $N \to \infty$.

\medskip

Let us now define the \emph{regularized solution} $\rho_N$ on $\bb R_+ \times \T$ as the solution to 
\begin{equation}
\label{eq:porous-smooth}
  \begin{cases} \partial_t \rho_N  = \partial_{uu} \big( (\rho_N)^m\big) & \text{in } \R_+\times \T, \vphantom{\bigg(} \\ 
(\rho_N)_{|_{t=0}} = \rho_N^{\rm ini} & \text{in } \T.\end{cases}
\end{equation}
This solution will play a central role in the proof of Theorem \ref{theo:main}, \clem{as well as the corresponding regularized pressure: 
\begin{equation}\label{eq:pressure-reg}
{\Pr}_N = \frac{m}{m-1} (\rho_N)^{m-1}.
\end{equation}} Let start here with two major properties of $\rho_N$.

\begin{proposition}\label{prop:max} Fix a time horizon line $T>0$.
Problem \eqref{eq:porous-smooth} admits a unique strong solution $\rho_N \in \mc C^\infty([0,T] \times \T)$ which satisfies
\be\label{eq:max-eps}
\varepsilon_N \leqslant \rho_N \leqslant 1-\varepsilon_N.
\ee
\end{proposition}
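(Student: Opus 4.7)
The strategy is to reduce~\eqref{eq:porous-smooth} to a uniformly parabolic quasilinear equation with smooth coefficients, for which classical theory applies directly, and then recover the bound~\eqref{eq:max-eps} from a parabolic comparison principle. First I would fix a smooth cut-off $\Phi_N\in\mc C^\infty(\R;[\varepsilon_N/2,1])$ such that $\Phi_N(r)=r$ for $r\in[\varepsilon_N,1-\varepsilon_N]$ and $\Phi_N$ is constant outside a slightly larger interval, and I would consider the truncated equation
\[
\p_t v = \p_u\!\left( m\, \Phi_N(v)^{m-1}\, \p_u v\right), \qquad v_{|_{t=0}}=\rho_N^{\rm ini}.
\]
Since $\Phi_N$ is bounded below by $\varepsilon_N/2$ and above by $1$, the diffusion coefficient $D_N(v):=m\Phi_N(v)^{m-1}$ is a smooth function of $v$ bounded between $m(\varepsilon_N/2)^{m-1}$ and $m$, so the modified equation is strictly (uniformly) parabolic on $[0,T]\times\T$.

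The second step is existence, uniqueness and smoothness for the truncated problem. Given that $\rho_N^{\rm ini}$ is smooth on $\T$ by property~(1) of Section~\ref{ssec:pme-modif}, classical results for quasilinear uniformly parabolic equations in divergence form on a closed manifold, see for instance Ladyzenskaja--Solonnikov--Uralceva, provide a unique solution $v\in\mc C^{2+\alpha,1+\alpha/2}([0,T]\times\T)$ for some $\alpha\in(0,1)$. Differentiating the equation and bootstrapping on the regularity of coefficients and of the initial datum, one gets $v\in\mc C^\infty([0,T]\times\T)$ by a standard Schauder argument (no compatibility condition is required since there is no boundary).

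Third, I would show that $v$ in fact stays in $[\varepsilon_N,1-\varepsilon_N]$, which forces $\Phi_N(v)=v$ and hence turns $v$ into a classical solution of~\eqref{eq:porous-smooth}. This follows from the parabolic maximum principle: the constants $\varepsilon_N$ and $1-\varepsilon_N$ are (stationary) classical solutions of the truncated equation, and thanks to~\eqref{eq:ini-eps} the initial datum satisfies $\varepsilon_N\le v(0,\cdot)\le 1-\varepsilon_N$, so by comparison $\varepsilon_N \le v(t,u)\le 1-\varepsilon_N$ on $[0,T]\times\T$. Setting $\rho_N=v$ then gives a smooth solution of~\eqref{eq:porous-smooth} satisfying~\eqref{eq:max-eps}. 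Uniqueness at the level of weak solutions is a direct consequence of the $L^1$-contraction estimate~\eqref{eq:comparison}, and in particular $\rho_N$ is the unique strong solution.

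The only genuinely delicate point is the maximum principle for the truncated quasilinear equation: since the coefficient depends on $v$, one cannot directly quote the linear parabolic maximum principle. The cleanest way is to write $w=(\varepsilon_N-v)^+$ (or $(v-(1-\varepsilon_N))^+$), multiply the equation satisfied by $v-\varepsilon_N$ by $-w$, integrate by parts using the periodicity and the nonnegativity of $D_N(v)$, and deduce that $\tfrac{\d}{\d t}\|w(t)\|_2^2\le 0$; since $w(0)\equiv 0$ by~\eqref{eq:ini-eps}, this gives $w\equiv 0$ and thus $v\ge\varepsilon_N$. The upper bound is handled symmetrically.
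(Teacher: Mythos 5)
Your proof is correct, and the essential ingredients are the same as the paper's: a comparison with the constant barriers $\varepsilon_N$ and $1-\varepsilon_N$ to get the bound~\eqref{eq:max-eps}, followed by classical non-degenerate parabolic regularity to get smoothness. The difference is in how you reach a solution in the first place. The paper simply invokes the existence theory for the (degenerate) porous medium equation and the associated $L^1$-contraction/comparison principle (citing \cite{CT80}), obtains the bound, observes that the equation is then uniformly parabolic, and cites \cite{LSU} and \cite{vaz} for smoothness. You instead build the solution from scratch via a standard truncation device: replace the nonlinearity by a cut-off $\Phi_N$ so the equation is uniformly parabolic from the start, get existence/uniqueness/smoothness for the truncated problem directly from \cite{LSU}, then verify via a Stampacchia-type energy argument (testing with $(\varepsilon_N - v)^+$ and $(v-(1-\varepsilon_N))^+$) that the truncation is never activated. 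Your route is more self-contained and elementary: it avoids relying on the degenerate PME theory and replaces the comparison principle for a degenerate equation by a maximum principle for the non-degenerate truncated one, which is easier to justify. The paper's version is shorter because it offloads the heavy lifting to standard references. Two minor observations: you should state explicitly that $\Phi_N$ is bounded above (say by $1$) so that $D_N$ is bounded, which you do implicitly via the range $[\varepsilon_N/2,1]$; and in the final uniqueness step, the $L^1$-contraction~\eqref{eq:comparison} indeed gives uniqueness of the weak solution of~\eqref{eq:porous-smooth}, so the smooth solution you constructed is the only one.
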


\begin{proof}[Proof of Proposition \ref{prop:max}]
The uniqueness of the weak (then strong) solution follows from the monotonicity of the porous medium equation, which yields 
$L^1$-contraction and a comparison principle (see for instance~\cite{CT80}). It follows from this comparison 
principle that $\varepsilon_N \leqslant \rho_{N} \leqslant 1-\varepsilon_N$ a.e.~in $[0,T]\times\T$. Therefore, the solution remains 
bounded away from the degeneracy $\rho=0$ of the PME \eqref{eq:PME}. The problem~\eqref{eq:porous-smooth} 
is then uniformly parabolic. It follows from the classical regularity theory for parabolic equations 
(see for instance~\cite{LSU}) that $\rho_N$ is smooth. See also \cite[Theorem 3.1, Proposition 12.13]{vaz}. 
\end{proof}
 
\begin{proposition}[Uniform convergence]\label{prop:unif}
The sequence $(\rho_N)_{N\in\N}$ converges uniformly in $[0,T]\times \T$ towards the unique weak solution to \eqref{eq:PME}. 
\end{proposition}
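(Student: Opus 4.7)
I would combine two ingredients: (i) the $L^1$-contraction \eqref{eq:comparison} giving convergence of $\rho_N$ to $\rho$ in $L^1$, and (ii) a uniform equicontinuity of the family $(\rho_N)_N$ on $[0,T]\times\T$, which upgrades the $L^1$-convergence to uniform convergence via an Arzelà-Ascoli argument.

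\emph{Step 1: $L^1$-convergence.} Applying \eqref{eq:comparison} both with $(\rho_N^{\rm ini},\rho^{\rm ini})$ and with $(\rho^{\rm ini},\rho_N^{\rm ini})$, and recalling $|\T|=1$,
$$
\sup_{t\in[0,T]} \|\rho_N(t,\cdot)-\rho(t,\cdot)\|_{L^1(\T)} \;\le\; \|\rho_N^{\rm ini}-\rho^{\rm ini}\|_{L^1(\T)} \;\le\; \|\rho_N^{\rm ini}-\rho^{\rm ini}\|_\infty,
$$
which tends to $0$ by \eqref{eq:conv}.

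\emph{Step 2: Uniform spatial Hölder regularity.} Rewriting \eqref{eq:porous-smooth} in terms of the pressure $\Pr_N$, one obtains $\p_t \Pr_N = (m-1)\Pr_N \p_{uu}\Pr_N + (\p_u \Pr_N)^2$; differentiating in $u$, the quantity $q = \p_u \Pr_N$ satisfies
$$
\p_t q = (m-1)\Pr_N \p_{uu} q + (m+1)\, q\, \p_u q.
$$
A direct maximum principle argument on $q^2$ (at a spatial maximum, $q \p_u q = 0$ so $\p_u(q^3) = 0$, and $\p_{uu}(q^2) \le 0$) shows that $t\mapsto \|\p_u \Pr_N(t,\cdot)\|_\infty$ is non-increasing. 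Combined with \eqref{eq:Lip-ini-reg}, this yields $\|\p_u \Pr_N(t,\cdot)\|_\infty \le C_{\rm Lip}$ uniformly in $t\in[0,T]$ and in $N$. Using $\rho_N = \bigl(\tfrac{m-1}{m}\Pr_N\bigr)^{1/(m-1)}$ together with the $\tfrac{1}{m-1}$-Hölder regularity of $p\mapsto p^{1/(m-1)}$ on bounded subsets of $\R_+$, one deduces a spatial Hölder bound
$$
|\rho_N(t,u)-\rho_N(t,u')| \le C\,|u-u'|^{1/(m-1)}, \qquad \forall t\in[0,T],\ \forall N.
$$

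\emph{Step 3: Time regularity and conclusion.} For any $\xi\in \mc C^\infty(\T)$, the weak form of \eqref{eq:porous-smooth} gives
$$
\frac{\d}{\d t}\int_\T \rho_N(t,u)\,\xi(u)\,\d u \;=\; -\int_\T \rho_N\, \p_u \Pr_N\, \p_u \xi\,\d u,
$$
and the right-hand side is bounded uniformly in $t$ and $N$ by $\|\p_u \xi\|_1 \cdot C_{\rm Lip}$ thanks to Step~2. Testing against localized bumps and combining with the spatial modulus of continuity from Step~2 (in the standard triangular manner: $|\rho_N(t,u)-\rho_N(s,u)|$ is split through an averaged value, the spatial error being $\omega_{\rm sp}(\delta)$ and the time error being $O(|t-s|/\delta)$, then optimizing in $\delta$) provides a joint modulus of continuity for $(\rho_N)$ on $[0,T]\times\T$, uniform in $N$. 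By Arzelà-Ascoli, every subsequence of $(\rho_N)$ admits a further subsequence converging uniformly to some $\tilde\rho\in \mc C([0,T]\times\T)$. By Step~1, $\tilde\rho = \rho$ almost everywhere, and both being continuous by Proposition~\ref{prop:regul-ast}, the equality holds everywhere. Uniqueness of the limit yields the uniform convergence of the full sequence.

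\textbf{Main obstacle.} The critical step is obtaining the spatial Lipschitz bound on $\Pr_N$ with a constant \emph{independent of $N$}: it is what prevents the degeneration when $\varepsilon_N\to 0$ from destroying the compactness. Note that the same bound on $\rho_N$ itself would blow up as $(\varepsilon_N)^{2-m}$, cf.~\eqref{eq:Lip-rho}, so working at the level of the pressure is essential. Once this is secured, the time equicontinuity is more routine, but still requires passing through integrated quantities since no pointwise uniform-in-$N$ bound on $\p_t \rho_N$ can be expected.
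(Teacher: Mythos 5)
Your proof is correct, but it takes a genuinely different route from the paper's. The paper works with $v_N = \rho_N^m$ and obtains relative compactness by an energy method: multiplying the PME by $\p_t(\rho_N^m)$ yields $L^2$-type bounds $\|\p_t v_N\|_{L^2([0,T]\times\T)}\le C$ and $\sup_t \|\p_u v_N(t,\cdot)\|_{L^2(\T)}\le C$, which give a spatial $\tfrac12$-H\"older bound and a $C^{0,1/2}([0,T];L^2(\T))$ bound; time-space H\"older equicontinuity is then extracted via interpolation in fractional Sobolev spaces $H^s(\T)$, $s\in(\tfrac12,1)$, and the embedding $H^s\hookrightarrow \Cc(\T)$. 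You instead work with the pressure $\Pr_N$, invoke the maximum principle to get the pointwise estimate $\|\p_u\Pr_N(t,\cdot)\|_\infty\le C_{\rm Lip}$ uniformly in $t$ and $N$ (which the paper does prove, but only later, in Proposition~\ref{lem:Lip}/\eqref{eq:p_Lip}, and there is no circularity since that estimate does not rely on Proposition~\ref{prop:unif}), transfer this to a spatial $\tfrac{1}{m-1}$-H\"older modulus for $\rho_N$ through $p\mapsto p^{1/(m-1)}$, and obtain time equicontinuity by the Kru\v{z}kov-style argument of testing the weak form against localized bumps and optimizing the bump width against the spatial modulus. Your variant is more pointwise and avoids fractional Sobolev spaces, at the cost of front-loading the Aronson--B\'enilan type pressure estimate; the paper's route postpones the pressure estimates and keeps the compactness proof purely in $L^2$-energy terms. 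Both conclude with Arzel\`a--Ascoli and the identification of the limit via the $L^1$-contraction step, which is identical. Two minor points to tighten: the maximum-principle argument on $q^2$ is stated formally and should be justified rigorously (the equation for $q$ is uniformly parabolic for each fixed $N$ since $\rho_N\ge\eps_N$, so this is standard; alternatively cite \cite[Prop.~15.4]{vaz} as the paper does); and the time-equicontinuity step is only sketched, though the "triangular splitting and optimize in $\delta$" argument you indicate is indeed standard and yields a $|t-s|^{1/m}$ time modulus.
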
 

\begin{proof}[Proof of Proposition \ref{prop:unif}]
It follows from the comparison principle \eqref{eq:comparison} that 
\[
\int_\bbT |\rho(t,u) - \rho_N(t,u)|\d u \le \int_\bbT |\rho^\text{ini}(u) - \rho_N^\text{ini}(u)|\d u, \qquad \text{for any } \; t \in [0,T]. 
\]
Hence, we deduce from estimate~\eqref{eq:conv} that $(\rho_N)_{N\in\bb N}$ converges in $\Cc([0,T];L^1(\bbT))$ towards $\rho$.
Therefore, it suffices to show that $\left(\rho_N\right)_{N\in \bb N}$ is relatively compact in $\Cc( [0,T]\times \bbT)$ to conclude the proof 
of Proposition~\ref{prop:unif} thanks to the uniqueness of the limit value. Our proof mainly follows the program of \cite[Section 7.7]{vaz}. 
We first need to introduce the \emph{fractional Sobolev spaces} $H^s(\T)$. 
We refer to~\cite{H2GHs} for an overview on fractional Sobolev spaces.
Since we are in the simple situation where the domain is the one-dimensional torus, 
such spaces are very easy to define and to manipulate with Fourier series. 
We recall here its core properties to be used in what follows.
Given $s\in[0,1]$, a function $\rho:\T\to\R$ belongs to $H^s(\T)$ iff 
\[\|\rho\|_{H^s(\T)}:=\bigg(\sum_{k\in\Z} \big(1+4\pi^2|k|^2\big)^s\; |\widehat{\rho}_k|^2\bigg)^\frac12 < \infty,\]
where the Fourier coefficient $\widehat\rho_k$ reads as 
$
\widehat{\rho}_k:=\int_{\T} \rho(u) e^{-i2\pi ku} \d u.
$
From Parseval's relation, we have
\[\|\rho\|_{H^1(\T)}^2 = \|\rho\|_{L^2(\T)}^2 + \|\partial_u \rho\|_{L^2(\T)}^2.\]
\clem{The space $H^s(\T)$ is compactly (hence continuously) embedded in $\mc C(\T)$ as soon as $s>\frac12$. 
Moreover, for any $\rho\in H^1(\T)$, the following interpolation inequality holds:
\begin{equation}\label{eq:interp-Hs}
\|\rho\|_{H^s(\T)} \leq \big\|\rho\big\|_{L^2(\T)}^{1-s}\;  \big\|\rho\big\|_{H^1(\T)}^s, \qquad \text{for any } s \in [0,1].
\end{equation}}
Going back to our problem, let us multiply the PME~\eqref{eq:porous-smooth} by $\p_t \left(\rho_N^\clem{m}\right)$ and then integrate over $[0,t^\star]\times\bbT$ for some 
arbitrary $t^\star \in [0,T]$ to get 
\[
A_N(t^\star)+B_N(t^\star) = 0, 
\]
where 
\[
A_N(t^\star) =  \iint_{[0,t^\star]\times \T} \p_t \rho_N\; \p_t \left(\rho_N^\clem{m}\right) \d t \d u, \qquad 
B_N(t^\star) = \iint_{[0,t^\star]\times \T} \p_u \left(\rho_N^\clem{m}\right) \; \p_{ut} \left(\rho_N^\clem{m}\right) \d t \d u. 
\]
The bound $|\rho_{N}|\le 1$ yields
\[
A_N(t^\star) \ge \frac{1}{m} \iint_{[0,t^\star]\times \T}  \left| \p_t v_N\right|^2 \d t\d u, 
\]
where we have set $v_N := \rho_N^\clem{m}$. On the other hand,
\[
B_N(t^\star) =  \frac12\int_\bbT \big|\p_u  v_N(t^\star,u)\big|^2 \d u - 
\frac12\int_\bbT \left|\p_u \left(\left(\rho_{N}^\text{ini}\right)^\clem{m}\right)\right|^2 \d u.
\]
The bound \eqref{eq:Lip-ini-reg} together with $0 \leq \rho_N^{\rm ini} \leq 1$ provide that 
\[
\frac12\int_\bbT \left|\p_u \left(\left(\rho_{N}^\text{ini}\right)^\clem{m}\right)\right|^2 \d u \clem{= \frac12\int_\bbT \Big|\rho_N^{\rm ini} \; \p_u {\Pr}_N^{\rm ini} \Big|^2 \d u } \le \clem{\frac12} (C_{\rm Lip})^2.
\]
Hence, we obtain that 
\[
{\frac{2}{m}} \iint_{[0,t^\star]\times \T}  \left| \p_t v_N \right|^2 \d t \d u + 
\int_\bbT \left|\p_u v_N(t^\star,u)\right|^2 \d u \le (C_{\rm Lip})^2, 
\qquad \text{for any } \;t^\star \in [0,T].
\]
To sum up, we have the following (uniform w.r.t.~$N$) estimates 
on the sequence $\left(v_{N}\right)_{N}$: denoting $C=C_{\rm Lip}\sqrt{m/2}$, 
\begin{align}\label{eq:u-inf}
&\big\|v_N\big\|_{L^\infty([0,T]\times\T)} \le 1, \vphantom{\Big(}
 \\ \label{eq:u_t}
&{\big\|\p_t v_N\big\|}_{L^2([0,T]\times \T)} \le C, \vphantom{\Big(}
\\ \label{eq:u_x}
\sup_{t\in[0,T]}&{\big\|\p_u v_N(t,\cdot)\big\|}_{L^2(\bbT)} \le C. \vphantom{\Big(}
\end{align}
It follows from~\eqref{eq:u_x} and the Cauchy-Schwarz inequality that 
\be\label{eq:holder_x}
|v_N(t,u) - v_N(t,\hat u )| \le C|u-\hat u|^{\frac12}, \qquad \text{for any }\;  u,\hat u \in \bbT, \;  t \in [0,T].
\ee
Similarly, we deduce from~\eqref{eq:u-inf} and~\eqref{eq:u_t} that $\left(v_N\right)_{N}$ is 
uniformly bounded in the space $\Cc^{0,\frac12}\big([0,T]; L^2(\bbT)\big)$, i.e., 
\be\label{eq:holder_t}
\big\|v_N(t) - v_N(\hat t)\big\|_{L^2(\bbT)} \le C |t - \hat t|^\frac12, 
\qquad  \text{for any }\; t,\hat t \in [0,T].
\ee
Using~\eqref{eq:interp-Hs}, 
we get that 
\[
\big\|v_N(t) - v_N(\hat t)\big\|_{H^s(\bbT)} \le {\big\|v_N(t) - v_N(\hat t)\big\|}_{H^1(\bbT)}^s  \; 
{\big\|v_N(t) - v_N(\hat t)\big\|}_{L^2(\bbT)}^{1-s}, \qquad \text{for any } \; t,\hat t \in [0,T].
\]
Combining it with~\eqref{eq:u_x} and \eqref{eq:holder_t}, this provides 
\[
\big\|v_N(t) - v_N(\hat t)\big\|_{H^s(\bbT)} \le \clem{C\left(4+T\right)^{\frac{s}2}} |t - \hat t|^{\frac{1-s}2}, \qquad\text{for any }\; t,\hat t \in [0,T].
\]
Choosing $s\in (\frac12, 1)$ and using the continuous embedding of $H^{s}(\bbT)$ in $\Cc(\bbT)$ 
we get that 
\be\label{eq:holder_t2}
|v_N(t,u) - v_N(\hat t,u)| \le  \clem{C\left(4+T\right)^{\frac{s}2}} |t - \hat t|^{\frac{1-s}2}, 
\qquad  \text{for any } \; u \in \bbT,\; t,\hat t \in [0,T].
\ee
The combination of~\eqref{eq:holder_x} with \eqref{eq:holder_t2} provides: \clem{there exists $C'>0$ that depends on $(C_{\rm Lip}, T,s)$ such that}, for any $ u, \hat u \in \bbT$, and $t,\hat t \in [0,T]$, 
\begin{align*} 
|v_N(t,u) - v_N(\hat t,\hat u )| & \le  | v_N(t,u) - v_N( \hat t,u)| + |v_N(\hat t,u) - v_N(\hat t,\hat u )| \\&
\le  
\clem{C'}
\left( |t - \hat t|^{\frac{1-s}2} + |u-\hat u|^{\frac12}\right).
\end{align*}
Therefore, one can apply Arzela-Ascoli's Theorem and claim that $\left( v_N\right)_N$ is 
relatively compact in $\Cc([0,T]\times\bbT)$, and thus so is $\left(\rho_N\right)_N = \big(\clem{(v_N)^{\frac1m}}\big)_N$. 
\end{proof}

\subsection{Relative entropy method} \label{ssec:strategy}

In the following, for any two probability measures $\mu, \nu$ on $\{0,1\}^{\T_N}$ we denote by $H(\mu|\nu)$ the relative entropy of $\mu$ with respect to $\nu$, defined as usual by
\[H(\mu | \nu) = \sup_{f} \bigg\{ \int f d\mu - \log \int e^f d\nu\bigg\},\]
where the supremum is carried over all real valued functions. The following \emph{entropy inequality} is going to be useful: for any $\gamma >0$,  we have
\begin{equation}
\label{eq:entrop}
\int f d\mu \leqslant \frac{1}{\gamma}\Big(\log \int e^{\gamma f} d\nu + H(\mu|\nu)\Big). 
\end{equation}
Recall that we denote by $\E^N_{\rho_N(t,\cdot)}$ the expectation with respect to the non-homogeneous Bernoulli product measure $\nu_{\rho_N(t,\cdot)}^N$. Fix $\alpha \in (0,1)$ and an invariant measure $\nu_\alpha$.  We introduce the density
\[
\psi_t^N(\eta):=\frac{\d\nu_{\rho_N(t,\cdot)}^N}{\d\nu_\alpha}(\eta)=\frac{1}{\mathsf{Z}_t^N} \exp\bigg(\sum_{x\in\T_N}\eta(x)\; \lambda_N\Big(t,\frac{x}{N}\Big)\bigg),
\]
where 
\be
\lambda_N(t,u)=\log\bigg(\frac{\rho_N(t,u)(1-\alpha)}{\alpha(1-\rho_N(t,u))}\bigg),
\label{eq:lambdaN}
\ee
and $\mathsf{Z}_t^N$ is the normalization constant. 
Note that $\lambda_N$ is well defined thanks to Proposition \ref{prop:max}. 
Recall moreover that $\mu_{t}^N$ is the distribution of the accelerated process at time $tN^2$ and denote its density with respect to $\nu_\alpha$ as
\[
f_t^N:=\frac{\d\mu_{t}^N}{\d\nu_\alpha}.\] Finally, we are interested in the relative entropy \begin{equation}\label{eq:hn} \mc H_N(t):=H\big(\mu_{t}^N | \nu_{\rho_N(t,\cdot)}^N\big)=\int f_t^N(\eta) \log\Big(\frac{f_t^N(\eta)}{\psi_t^N(\eta)}\Big) \d \nu_{\alpha}(\eta).
\end{equation}
The proof of Theorem \ref{theo:main} is based on the investigation of the time evolution of the entropy $\mc H_N(t)$. This strategy is inspired by the \emph{relative entropy method} which is exposed in details  for instance in \cite[Chapter 6]{kl}. However, in our case the standard method cannot work:  the usual scheme works with the relative entropy of $\mu_{t}^N$ with respect to the product measure $\nu_{\rho(t,\cdot)}^N$, associated with the \emph{true} weak solution of the PME \eqref{eq:PME}. As we have seen in Section \ref{ssec:pme-orig}, this solution has poor regularity properties, and more importantly, it can vanish on non-trivial intervals. This would make the relative entropy take infinite values for presumably long times. 

This is why we work with a different relative entropy: here, $\mc H_N(t)$ defined in \eqref{eq:hn} involves the non-homogeneous product measure $\nu^N_{\rho_N(t,\cdot)}$, which is associated with the \emph{regularized} solution $\rho_N$, defined in \eqref{eq:porous-smooth}. Since $\rho_N$ is smooth and bounded away from $0$ and $1$, the relative entropy is always finite. Since $(\rho_N)$ uniformly converges to $\rho$ on $[0,T]\times\T$ (from Proposition \ref{prop:unif}), one  might believe that the arguments of \cite{kl} can be easily adapted. However, one needs much more than uniform convergence. In particular, sharp controls on the derivatives of $\rho_N$ are also needed, as explained in the rest of the paper. 

Let us conclude this section with two important results concerning $\mc H_N(t)$. At the end of this paragraph we will show how do they imply Theorem \ref{theo:main}. First of all, at $t=0$, the initial relative entropy is of order \cm{$N(\varepsilon_N)^{\frac{1}{m-1}} |\log \varepsilon_N|$} as $N\to\infty$, namely:
\begin{lemma}[Initial entropy]\label{l:smallentropy}
\begin{equation*} \mc H_N(0)=H\big(\mu_0^N | \nu^N_{\rho_N^{\rm ini}(\cdot)}\big) = H\big(\nu_{\rho^{\rm ini}}^N|\nu^N_{\rho_N^{\rm ini}(\cdot)}\big)=\cm{\mc O\big(N(\varepsilon_N)^{\frac{1}{m-1}}|\log\varepsilon_N|\big)}=o(N),  \text{ as } N\to\infty.\end{equation*}
\end{lemma}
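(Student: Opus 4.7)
The plan is to exploit the product structure of both $\mu_0^N = \nu_{\rho^{\rm ini}(\cdot)}^N$ and of the reference measure $\nu_{\rho_N^{\rm ini}(\cdot)}^N$: since both measures factorize over sites, so does the relative entropy, and one only has to sum Bernoulli one-site relative entropies. Writing
\[
h(p,q) := p\log(p/q) + (1-p)\log\bigl((1-p)/(1-q)\bigr)
\]
(with the convention $0\log 0 = 0$), one obtains
\[
\mathcal{H}_N(0) \;=\; \sum_{x \in \T_N} h\bigl(\rho^{\rm ini}(x/N),\; \rho_N^{\rm ini}(x/N)\bigr).
\]
Thus it will be enough to prove a uniform pointwise estimate of the form $h(p,q) \leq C \varepsilon_N^{1/(m-1)} |\log \varepsilon_N|$ under the constraints $\varepsilon_N \leq q \leq 1-\varepsilon_N$ given by \eqref{eq:ini-eps} and $|p - q| \leq C_{\rm ini}\varepsilon_N^{1/(m-1)}$ given by \eqref{eq:conv}, and then to sum the estimate over the $N$ sites of $\T_N$.

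To establish that pointwise bound I would split on the position of $p$ relative to the natural scale $\varepsilon_N^{1/(m-1)}$. In the bulk regime where $p \in [2C_{\rm ini}\varepsilon_N^{1/(m-1)},\, 1 - 2C_{\rm ini}\varepsilon_N^{1/(m-1)}]$, both $q$ and $1-q$ are bounded below by a constant multiple of $\varepsilon_N^{1/(m-1)}$, so $q(1-q) \geq c\,\varepsilon_N^{1/(m-1)}$ for some $c>0$; combining this with the standard $\chi^2$ bound $h(p,q) \leq (p-q)^2/[q(1-q)]$ immediately yields $h(p,q) \leq C\varepsilon_N^{1/(m-1)}$. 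In the degenerate regime $p < 2C_{\rm ini}\varepsilon_N^{1/(m-1)}$, both $p$ and $q$ are $O(\varepsilon_N^{1/(m-1)})$; the term $(1-p)\log((1-p)/(1-q))$ is controlled by $C|p-q| \leq C\varepsilon_N^{1/(m-1)}$ using that $1-p$ and $1-q$ are bounded below by $1/2$ for $N$ large, whereas the positive part of $p\log(p/q)$ is bounded by $p\log(p/\varepsilon_N) \leq C\varepsilon_N^{1/(m-1)}|\log \varepsilon_N|$, thanks to the fact that the ratio $p/q$ never exceeds $\varepsilon_N^{-(m-2)/(m-1)}$. The symmetric regime $p > 1 - 2C_{\rm ini}\varepsilon_N^{1/(m-1)}$ is handled by exchanging the roles of $p$ and $1-p$.

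Summing over $x \in \T_N$ and recalling that $\varepsilon_N \to 0$ then gives the announced bound $\mathcal{H}_N(0) = \mathcal{O}\bigl(N \varepsilon_N^{1/(m-1)} |\log \varepsilon_N|\bigr) = o(N)$. The only delicate point is the degenerate regime: the $\chi^2$ estimate alone would contribute something of order $(p-q)^2/[q(1-q)] \sim \varepsilon_N^{(3-m)/(m-1)}$ per site, which diverges as soon as $m \geq 3$, and one has to split according to the sign of $p-q$ in order to exploit either the favorable sign of $p\log(p/q)$ when $p \leq q$, or the smallness of $p$ against the at most logarithmic blow-up of $\log(p/q)$ when $p > q$. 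Once this case is understood, the remaining computations are elementary, and both the $(\varepsilon_N)^{1/(m-1)}$ decay and the $|\log \varepsilon_N|$ factor in the announced rate are exactly accounted for by this degenerate regime.
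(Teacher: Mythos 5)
Your proof is correct and follows essentially the same route as the paper's: both exploit the product structure to reduce $\mathcal{H}_N(0)$ to a sum of one-site Bernoulli relative entropies, split on whether $\rho^{\rm ini}(x/N)$ is within $O(\varepsilon_N^{1/(m-1)})$ of $0$ or $1$, use the lower bound $\rho_N^{\rm ini}\geqslant\varepsilon_N$ to confine the logarithmic blow-up to a $|\log\varepsilon_N|$ factor in the degenerate regime, and use the closeness $|\rho^{\rm ini}-\rho_N^{\rm ini}|\leqslant C_{\rm ini}\varepsilon_N^{1/(m-1)}$ in the bulk. The only cosmetic difference is your use of the $\chi^2$ upper bound on the KL divergence in the bulk regime where the paper instead Taylor-expands the logarithm directly; both give the same $O(\varepsilon_N^{1/(m-1)})$ per-site contribution there.
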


This lemma is proved in Section~\ref{s:smallentropy}. Next, we are able to control the entropy production on a finite time interval, thanks to all the sharp estimates that we will obtain in Section \ref{ssec:properties}. This is where we need to make an assumption on the convergence speed of $(\varepsilon_N)$: 

\begin{assumption}[Convergence speed of $\varepsilon_N$] \label{ass:eps}
\begin{equation}
\label{eq:ass-eps}
\cm{\lim_{N\to\infty} N(\varepsilon_N)^{{6m-6}} = + \infty.}
\end{equation}
\end{assumption}

\begin{proposition}[Entropy production]\label{thm:entropy}
Under Assumption \ref{ass:eps}, there exists a constant $\kappa>0$ such that 
\[
\mc H_N(T) \leqslant \kappa \int_0^T \mc H_N(s) \d s + o_T(N),
\]
where $o_T(N)$ stands for a sequence of real numbers $C(T,N)$ such that $C(T,N)/N \to 0$ as $N\to \infty$.
\end{proposition}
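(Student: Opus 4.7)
The plan is to implement Yau's relative entropy scheme of \cite[Chapter~6]{kl}, but taking $\nu_{\rho_N(t,\cdot)}^N$ (built on the regularized solution) as the reference measure. More precisely, starting from Yau's inequality
\[
\p_t \mc H_N(t) \leq \int \frac{1}{\psi_t^N}\bigl(N^2 \mc L_N \psi_t^N - \p_t \psi_t^N\bigr) f_t^N \, \d\nu_\alpha,
\]
(which comes from the forward Kolmogorov equation and the reversibility of $\nu_\alpha$), the goal is to show that the right-hand side is bounded by $\kappa\,\mc H_N(t) + o_T(1)\cdot N$ uniformly in $t\in[0,T]$; integrating in time then yields the claim.

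\textbf{Step 1: explicit form of the integrand.} Writing $\log\psi_t^N(\eta)=\sum_{x}\eta(x)\lambda_N(t,x/N)-\log \mathsf Z_t^N$, the time derivative contributes $\sum_x \eta(x)\,\p_t\lambda_N(t,x/N)$ plus a deterministic piece. Using \eqref{eq:porous-smooth} and the definition \eqref{eq:pressure-reg} of $\Pr_N$, I rewrite
\[
\p_t\lambda_N = \frac{\p_u(\rho_N\,\p_u \Pr_N)}{\rho_N(1-\rho_N)}.
\]
For the generator, I use that $\psi_t^N(\eta^{x,y})/\psi_t^N(\eta)=\exp(\lambda_N(t,y/N)-\lambda_N(t,x/N))$ and Taylor expand to third order in $1/N$. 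Multiplication by $N^2$ then produces, after summing by parts in $x$, a discrete analogue of $\int \p_u(\rho_N\,\p_u\Pr_N)/[\rho_N(1-\rho_N)]$ paired against the microscopic current, plus remainder terms weighted by $N^{-1}\|\p_u^3\lambda_N\|_\infty$ etc.

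\textbf{Step 2: control of the Taylor remainders.} Differentiating $\lambda_N=\log(\rho_N(1-\alpha)/(\alpha(1-\rho_N)))$ and using the bounds on $\p_u^k \rho_N$ and $\p_u^k \Pr_N$ to be proved in Section~\ref{ssec:properties}, the remainders are of order $N^{-1}(\varepsilon_N)^{-\beta}$ with $\beta\le 6m-6$. This is precisely where Assumption~\ref{ass:eps} is used: $N(\varepsilon_N)^{6m-6}\to\infty$ guarantees these terms are $o(1)$ after summation over $\T_N$.

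\textbf{Step 3: the one-block estimate.} The leading contribution is of the shape
\[
\sum_{x\in \T_N}\Phi(t,x/N)\,\tau_x g(\eta),
\]
where $g$ is a local function involving the jump rates $r_{x,x+1}\eta(x)(1-\eta(x+1))$ and $\Phi$ is smooth in the variables built from $\rho_N$, $\p_u\rho_N$, $\p_{uu}\rho_N$. I would apply the classical one-block replacement: average the local function on a mesoscopic box and compare with $\overline g(\rho_N(t,x/N))$ by means of the entropy inequality~\eqref{eq:entrop} combined with a spectral gap on boxes. The degeneracy obstructs the usual spectral-gap argument on boxes with too small an empirical density, but by Proposition~\ref{lem:pos} the Lebesgue measure of $\{u : 0<\rho(t,u)<\delta\}$ is small in integral, so by uniform convergence (Proposition~\ref{prop:unif}) the contribution of boxes where $\rho_N$ is below $\delta$ is negligible; outside this region $\rho_N$ is bounded below and the standard one-block estimate applies.

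\textbf{Step 4: macroscopic cancellation and Gronwall.} Once the local averages have been replaced, the deterministic coefficients recombine: since $\rho_N$ solves \eqref{eq:porous-smooth}, the leading macroscopic expression vanishes identically. The remaining contributions, after one more use of~\eqref{eq:entrop}, split into an $o(N)$ piece and a piece bounded by $\kappa\,\mc H_N(t)$. Integrating in time on $[0,T]$ and bounding $\mc H_N(0)$ by Lemma~\ref{l:smallentropy} produces the announced inequality.

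\textbf{Main obstacle.} The crux is the one-block estimate at the interface. The jump rates $r_{x,x+1}$ vanish whenever no particle sits in the neighborhood, so on mesoscopic boxes where the empirical density is very small the Dirichlet form degenerates and the standard log-Sobolev/spectral-gap arguments do not close. Circumventing this requires delicately combining the space-time control of the interface layer $\{0<\rho<\delta\}$ given by Proposition~\ref{lem:pos} with a quantitative version of the replacement on boxes where $\rho_N\ge\delta$, in such a way that the threshold $\delta$ can be sent to zero after $N\to\infty$ while respecting Assumption~\ref{ass:eps}.
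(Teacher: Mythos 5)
Your high-level plan -- Yau's inequality, Taylor expansion controlled by the norm bounds of Section~\ref{ssec:properties}, a one-block replacement, and a closing large-deviation/Gronwall step -- matches the paper's route. But Step~3, the one-block estimate, which is the central new difficulty, has a genuine gap as written.

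You assert that because $\int_0^T \mathrm{Leb}\{u : 0<\rho(t,u)<\delta\}\,\d t$ is small (Proposition~\ref{lem:pos}) and $\rho_N\to\rho$ uniformly (Proposition~\ref{prop:unif}), ``the contribution of boxes where $\rho_N$ is below $\delta$ is negligible.'' This does not follow. The set $\{\rho_N<\delta\}$ contains, to leading order, the set $\{\rho=0\}$, which can have Lebesgue measure of order~$1$ (that is exactly the situation the paper is built to handle). Proposition~\ref{lem:pos} only controls the \emph{interface layer} $\{0<\rho<\delta\}$, not the bulk dead zone $\{\rho=0\}$, so a measure-smallness argument cannot dispose of the sub-$\delta$ boxes. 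The paper instead splits $\T_N$ into three regimes according to $\rho_N$: a ``good'' set $G_t^{N,\ell}(\delta-\delta_N)$ where $\rho_N\ge\delta-\delta_N$, an ``almost-zero'' set $Z_t^{N,\ell}(\delta_N)$ where $\rho_N\le\delta_N$, and a ``bad'' set $B_t^{N,\ell}$ in between. Only the bad set is controlled via Proposition~\ref{lem:pos}. For the almost-zero set the relevant mechanism is completely different: the local function $\tau_x V_{\ell,\psi}(\eta)$ \emph{vanishes} whenever the $(\ell+\ell_0)$-box around $x$ is empty, so one bounds the exponential moment directly using the Bernoulli product structure and the smallness of $\delta_N$. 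This is not a smallness-of-support argument at all; it is a smallness-of-the-function argument, and it must be made because the support there is macroscopic.

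A second, related omission: even on the good set, ``the standard one-block estimate applies'' is not automatic. When $\rho_N\ge\delta$ the \emph{macroscopic} density is bounded below, but the microscopic dynamics is still degenerate: on the event $\mathcal Q_{x,\ell}^c$ (no pair of neighbouring particles in the box, i.e.\ no mobile cluster) the generator restricted to the box is reducible and no spectral-gap argument can close. The paper, following \cite{GLT}, first splits the configuration space into $\mathcal Q_{x,\ell}$ and $\mathcal Q_{x,\ell}^c$: on $\mathcal Q_{x,\ell}$ the dynamics is irreducible and the classical argument applies; on $\mathcal Q_{x,\ell}^c$ one shows its $\nu^N_{\rho_N(t,\cdot)}$-probability is $(1-(\delta-\delta_N)^2)^\ell$, exponentially small in $\ell$, which you then feed into the entropy inequality. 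Your sketch mentions a spectral gap ``on boxes'' but never introduces the configuration-space split $\mathcal Q_{x,\ell}/\mathcal Q_{x,\ell}^c$, which is what makes the spectral-gap (or ergodicity) argument legitimate in the first place. Without it the argument would fail even on the bulk positivity set.

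Finally, a minor imprecision in Step~4: after the one-block replacement, what cancels via the PDE identity $\partial_t\lambda_N = \partial_{uu}\lambda_N\,\overline h'(\rho_N)+(\partial_u\lambda_N)^2\,\overline g'(\rho_N)$ is only the \emph{linear} part of the Taylor expansion; what remains is the second-order remainder $\overline H(a,b)=\overline h(a)-\overline h(b)-\overline h'(b)(a-b)$ (and the analogue $\overline G$), and controlling this residual requires the large-deviation estimate of Lemma~\ref{lem:largedev}, not a ``vanishes identically'' claim.
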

We prove this result in Section \ref{sec:proof}. 

From Gronwall's inequality and Lemma \ref{l:smallentropy},  we conclude:
\begin{corollary}\label{cor:entropy} For any $t >0$,
\[H\big(\mu_t^N \; | \; \nu_{\rho_N(t,\cdot)}^N\big) =\mc H_N(t) =  o_t(N), \qquad \text{as } N\to\infty .\]
\end{corollary}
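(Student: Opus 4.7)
The plan is to apply Gronwall's lemma directly to the integral inequality furnished by Proposition~\ref{thm:entropy}, using Lemma~\ref{l:smallentropy} to handle the initial value. The ingredients are already in place, so this is essentially a one-step deduction.

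First, I would observe that the estimate of Proposition~\ref{thm:entropy} holds not only at the terminal time $T$ but at any intermediate time $t \in [0,T]$. Indeed, the argument yielding that proposition consists in integrating a differential inequality for $s \mapsto \mc H_N(s)$ over $[0,T]$, and the same integration performed over $[0,t]$ (together with Lemma~\ref{l:smallentropy} to bound the initial contribution $\mc H_N(0) = o(N)$) yields
\[
\mc H_N(t) \;\leqslant\; \kappa \int_0^t \mc H_N(s) \,\d s + o_t(N), \qquad \text{for every } t \in [0,T],
\]
where, as before, $o_t(N)$ denotes a quantity $C(t,N)$ with $C(t,N)/N \to 0$ as $N \to \infty$.

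Next, I would invoke the integral form of Gronwall's lemma: if $u,a \geq 0$ satisfy $u(t) \leqslant a(t) + \kappa \int_0^t u(s)\,\d s$ with $a$ non-decreasing in $t$, then $u(t) \leqslant a(t)\, e^{\kappa t}$. Applying this to $u(t) = \mc H_N(t)$ and $a(t) = o_t(N)$, one obtains
\[
\mc H_N(t) \;\leqslant\; o_t(N) \cdot e^{\kappa t}.
\]
Since $e^{\kappa t}$ depends only on $t$ and not on $N$, multiplying an $o_t(N)$ quantity by it yields another $o_t(N)$ quantity. Therefore $\mc H_N(t) = o_t(N)$ as $N \to \infty$, which is exactly the claim of Corollary~\ref{cor:entropy}.

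There is no real obstacle in this step: all the substantive analytic and probabilistic work, in particular the careful use of the approximation $\rho_N$ and the refinements of the one-block estimate needed near the free boundary, is hidden inside Proposition~\ref{thm:entropy}. The only minor point worth being careful about is to verify that the constants produced in Proposition~\ref{thm:entropy} are uniform for $s \in [0,t]$ (so that Gronwall applies cleanly), which is the case because the underlying bounds in Section~\ref{ssec:properties} are formulated on the full interval $[0,T]$.
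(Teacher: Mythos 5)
Your argument is correct and coincides with the paper's proof, which is merely the one-line remark that the corollary follows ``From Gronwall's inequality and Lemma~\ref{l:smallentropy}.'' Your additional care about extending the bound of Proposition~\ref{thm:entropy} to intermediate times $t\in[0,T]$ and about the uniformity of the remainder terms over $[0,T]$ is precisely the routine bookkeeping the paper leaves implicit.
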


Then, one has to prove that Corollary \ref{cor:entropy} is sufficient to show the local equilibrium result \eqref{eq:localequil} stated in Theorem \ref{theo:main}. To do so, one needs to know that the approximate solution $\rho_N(t,\cdot)$ converges uniformly to $\rho(t,\cdot)$ in $\T$ (which does hold from Proposition \ref{prop:unif}), and that the solution $\rho(t,\cdot)$ is continuous. We have all in hands to conclude the proof of Theorem \ref{theo:main}: 

\begin{proof}[Proof of Theorem~\ref{theo:main}] One has to compute the limit of the left hand side of \eqref{eq:localequil}. For the sake of clarity, we assume that the local function $\varphi$ only depends on the configuration value at 0, namely: $\varphi(\eta)=\varphi(\eta(0))$. Recall that we want  to prove that the expectation
\begin{equation}\label{eq:exp}\bb E_{\mu_0^N} \bigg[\bigg|\frac1N \sum_{x\in\T_N}G\Big(\frac x N\Big)\tau_x\varphi\big(\eta_t^N\big) - \int_{\T} G(u)\overline\varphi\big(\rho(t,u)\big) \d u \bigg|\bigg] \end{equation}
vanishes as $N\to\infty$.  Note that $G$ and $\rho(t,\cdot)$ are continuous and bounded. Then, for any fixed $t>0$, we easily replace 
\[\int_{\T} G(u)\overline\varphi\big(\rho(t,u)\big) \d u \quad \text{with}\quad \tfrac1N \sum_{x\in\T_N}G\big(\tfrac x N\big)\overline\varphi\big(\rho(t,\tfrac x N)\big),\] paying a small price of order $o_t(1)$. 
Next, we perform an integration by parts, and we bound as follows:
\begin{align} \int &\bigg|\tfrac1N \sum_{x\in\T_N}G\big(\tfrac x N\big)\varphi(\eta(x)) - \tfrac1N \sum_{x\in\T_N}G\big(\tfrac x N\big)\overline\varphi\big(\rho(t,\tfrac x N)\big)\bigg| \d \mu_t^N(\eta)\notag \\
& \leqslant \int  \tfrac1N \sum_{x\in\T_N} \bigg| \tfrac1{2\ell+1} \sum_{|y-x|\leqslant \ell} \big(G\big(\tfrac y N\big)-G\big(\tfrac x N\big)\big)\varphi(\eta(y))\bigg| \d \mu_t^N(\eta) \label{eq:firstlim} \\
&\quad  + \int\tfrac1N \sum_{x\in\T_N} \bigg|\tfrac{1}{2\ell+1}G\big(\tfrac x N\big) \sum_{|y-x|\leqslant \ell} \big(\varphi(\eta(y))-\overline\varphi\big(\rho(t,\tfrac x N)\big)\big)\bigg| \d \mu_t^N(\eta). \label{eq:secondlim} \end{align}
Since $G$ is smooth, the first limit \eqref{eq:firstlim} vanishes as $N\to\infty$ and then $\ell\to\infty$. Since $G$ is bounded,  \eqref{eq:exp} vanishes if we are able to prove that
\[
\limsup_{\ell \to\infty}\limsup_{N\to\infty} \int \bigg(\frac{1}{N}\sum_{x\in\T_N} \Big|\frac{1}{2\ell+1}\sum_{|y-x|\leqslant \ell} \varphi(\eta(y)) - \overline\varphi\big(\rho\big(t,\tfrac x N\big)\big)\Big|\bigg)\d \mu_t^N(\eta)=0.
\]
By the entropy inequality \eqref{eq:entrop}, for every $\gamma >0$, we bound the expectation under the previous limit by
\[
\frac{\mc H_N(t)}{\gamma N} + \frac{1}{\gamma N}\log \bb E^N_{\rho_N(t,\cdot)}\bigg[\exp\Big(\gamma\sum_{x\in\T_N} \Big|\frac{1}{2\ell+1}\sum_{|y-x|\leqslant \ell} \varphi(\eta(y))-\overline\varphi\big(\rho\big(t,\tfrac x N\big)\big)\Big|\Big)\bigg].
\]
From Corollary \ref{cor:entropy}, the first term above vanishes as $N\to\infty$. As for the second term, we use the fact that $\nu_{\rho_N(t,\cdot)}^N$ is a product measure, 	and from H\"older's inequality we bound it from above by 
\begin{equation}\label{eq:riemansum}
\frac{1}{\gamma N}\sum_{x\in\T_N} \frac{1}{2\ell+1} \log \bb E^N_{\rho_N(t,\cdot)}\bigg[\exp\Big(\gamma \Big|\sum_{|y-x|\leqslant \ell} \varphi(\eta(y))-\overline{\varphi}\big(\rho\big(t,\tfrac x N\big)\big)\Big|\Big)\bigg].
\end{equation}
Since the profile $\rho(t,\cdot)$ is continuous on $\T$, and the function $\rho_N(t,\cdot)$ converges uniformly to $\rho(t,\cdot)$ (from Proposition \ref{prop:unif}) we deduce that \eqref{eq:riemansum} converges as $N\to\infty$ to
\[
\frac{1}{\gamma}\int_{\T} \frac{1}{2\ell+1}\log E_{\rho(t,u)}\bigg[\exp\Big(\gamma \Big|\sum_{|y|\leqslant \ell} \varphi(\eta(y))-\overline\varphi(\rho(t,u))\Big|\Big)\bigg] \; \d u,
\]
see also \eqref{eq:rieman-unif}. To conclude the proof, we proceed as in \cite[Chapter 6.1]{kl}: use the inequalities $e^x \leqslant 1+x+\frac12 x^2 e^{|x|}$ and $\log(1+x)\leqslant x$. Finally, choose $\gamma=\varepsilon / (2\ell+1)$. From the law of large numbers, last expression vanishes as $\ell \to \infty$ and then $\varepsilon \to 0$.
\end{proof}

\section{Norm bounds: statement and proof} \label{ssec:properties}

In this section we state and prove the bounds on the derivatives of the regularized solution that are needed for Proposition \ref{thm:entropy}. The latter will be  proved further in Section~\ref{sec:relative}.

\begin{proposition}\label{lem:Lip}
For any $N \in\N$, there holds
\begin{align}
  \sup_{(t,u)\in[0,T]\times\T}\big|\partial_u {\Pr}_N(t,u) \big| \leqslant &\; C_{\rm Lip}, \label{eq:p_Lip}\\
  \sup_{(t,u)\in[0,T]\times\T}\big|\partial_u \rho_N(t,u) \big| \leqslant &\;  \frac{C_{\rm Lip}}{m} \, (\eps_N)^{2-m},  \label{eq:rho_Lip}\\
  \iint_{[0,T]\times\T}|\p_{uu} {\Pr}_N(t,u)|^2\d t \d u \leq &\; \frac{(C_{\rm Lip})^2}{2m}\, (\eps_N)^{1-m}, \label{eq:L2H2_p}\\
  \iint_{[0,T]\times\T}|\p_{uu} \rho_N(t,u)|^2\d t \d u \leq &\; C_0(\eps_N)^{5-3m} 
,
  \label{eq:L2H2_rho}\
 \end{align}
where $C_{\rm Lip}$ has been defined in \eqref{eq:lip} and $C_0$ is related to $m$ and $C_{\rm Lip}$ as follows:
\[C_0=\frac{(C_{\rm Lip})^2}{m^3}
  \left( 1+ \frac{2(m-2)^2}{(3m-4)(3m-5)}\right).\]
\end{proposition}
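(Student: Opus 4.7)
The plan is to work throughout with the \emph{pressure formulation} of the porous medium equation: writing ${\Pr}_N = \frac{m}{m-1}\rho_N^{m-1}$, a direct computation gives
\[
\partial_t \rho_N = \partial_u\bigl(\rho_N\,\partial_u {\Pr}_N\bigr), \qquad \partial_t {\Pr}_N = (m-1)\,{\Pr}_N\,\partial_{uu}{\Pr}_N + (\partial_u {\Pr}_N)^2.
\]
Since Proposition~\ref{prop:max} guarantees that $\rho_N$ is smooth on $[0,T]\times \T$ with $\rho_N \geq \varepsilon_N>0$, the problem is uniformly parabolic and all derivative manipulations below are legitimate. For \eqref{eq:p_Lip} I would differentiate the pressure equation once to obtain the quasilinear parabolic equation $\partial_t w = (m+1)\,w\,\partial_u w + (m-1)\,{\Pr}_N\,\partial_{uu} w$ for $w := \partial_u {\Pr}_N$. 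At a spatial maximizer of $w$ the transport term vanishes ($\partial_u w = 0$) and the diffusion term is nonpositive ($\partial_{uu} w \leq 0$ and ${\Pr}_N \geq 0$); an envelope argument then shows $\sup_u w(t,\cdot)$ is non-increasing in $t$, and similarly for $-w$, yielding $\|\partial_u {\Pr}_N(t,\cdot)\|_\infty \leq \|\partial_u {\Pr}_N^{\rm ini}\|_\infty \leq C_{\rm Lip}$ by \eqref{eq:Lip-ini-reg}. The bound \eqref{eq:rho_Lip} then follows algebraically from $\partial_u \rho_N = m^{-1}\rho_N^{2-m}\,\partial_u {\Pr}_N$ combined with $\rho_N \geq \varepsilon_N$.

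For \eqref{eq:L2H2_p} I would test the pressure equation against $\partial_{uu}{\Pr}_N$ and integrate over $\T$. Integration by parts converts the time term into $-\tfrac{1}{2}\tfrac{d}{dt}\|\partial_u {\Pr}_N\|_{L^2(\T)}^2$, the cubic term $\int_\T (\partial_u {\Pr}_N)^2\,\partial_{uu} {\Pr}_N\,du$ is a perfect derivative and vanishes on the torus, and what remains is the dissipation $(m-1)\int_\T {\Pr}_N (\partial_{uu} {\Pr}_N)^2\,du$. Integrating in $t \in [0,T]$ and using \eqref{eq:Lip-ini-reg} at $t = 0$ together with the lower bound ${\Pr}_N \geq \tfrac{m}{m-1}\varepsilon_N^{m-1}$ yields exactly \eqref{eq:L2H2_p}.

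The delicate estimate is \eqref{eq:L2H2_rho}. Differentiating ${\Pr}_N = \tfrac{m}{m-1}\rho_N^{m-1}$ twice and solving for the second derivative of $\rho_N$ gives
\[
\partial_{uu}\rho_N = \frac{\rho_N^{2-m}}{m}\,\partial_{uu}{\Pr}_N - (m-2)\,\rho_N^{-1}(\partial_u \rho_N)^2,
\]
whence $(\partial_{uu}\rho_N)^2 \leq \tfrac{2}{m^2}\rho_N^{4-2m}(\partial_{uu} {\Pr}_N)^2 + 2(m-2)^2\rho_N^{-2}(\partial_u \rho_N)^4$. Integrating the first contribution and using \eqref{eq:L2H2_p} produces exactly the $\tfrac{(C_{\rm Lip})^2}{m^3}\varepsilon_N^{5-3m}$ piece of $C_0$. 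The main obstacle lies in the second term: a pointwise use of \eqref{eq:rho_Lip} produces only $\varepsilon_N^{6-4m}$, strictly worse than the target $\varepsilon_N^{5-3m}$. The key idea I would use is to substitute \emph{only one} of the two factors $(\partial_u \rho_N)^2 = m^{-2}\rho_N^{4-2m}(\partial_u {\Pr}_N)^2$ and to invoke \eqref{eq:p_Lip} on that factor, so as to reduce matters to estimating $\iint \rho_N^{2-2m}(\partial_u \rho_N)^2$. This last integral I would handle via the entropy-dissipation identity obtained by testing $\partial_t \rho_N = \partial_{uu}(\rho_N^m)$ against $G'(\rho_N)$ with
\[
G(\rho) := \frac{\rho^{5-3m}}{(3m-4)(3m-5)},
\]
which is convex and nonnegative for $m \geq 2$ (the case $m = 2$ is trivial since the nonlinear remainder carries the factor $(m-2)^2 = 0$). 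A single integration by parts yields
\[
\frac{d}{dt}\int_\T G(\rho_N)\,du + m \int_\T \rho_N^{2-2m}(\partial_u \rho_N)^2\,du = 0,
\]
so that dropping the nonnegative terminal term and using $(\rho_N^{\rm ini})^{5-3m} \leq \varepsilon_N^{5-3m}$ produces precisely the $\tfrac{2(m-2)^2 (C_{\rm Lip})^2}{m^3(3m-4)(3m-5)}\,\varepsilon_N^{5-3m}$ complement needed to recover $C_0$; the denominators $(3m-4)(3m-5)$ are the algebraic signature of this choice of entropy weight.
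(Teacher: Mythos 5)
Your proposal is correct and takes essentially the same route as the paper. For \eqref{eq:p_Lip} the paper writes the same PDE for $f_N=\partial_u\Pr_N$ in divergence form, $\partial_t f_N-\partial_u\bigl(m\rho_N^{m-1}\partial_u f_N+f_N^2\bigr)=0$, and invokes the maximum principle just as you do; for \eqref{eq:L2H2_p} it multiplies that equation by $f_N$, which after one integration by parts is the same as your testing the pressure equation against $\partial_{uu}\Pr_N$; and for \eqref{eq:L2H2_rho} it uses the identical decomposition of $\partial_{uu}\rho_N$, the identical trick of spending $|\partial_u\Pr_N|\le C_{\rm Lip}$ on one factor to reduce to $\iint\rho_N^{2-2m}(\partial_u\rho_N)^2$, and tests the PME against $\varphi_1(\rho_N)=\frac{(m-2)^2}{m(4-3m)}\rho_N^{4-3m}$, which is a constant multiple of your $G'$ (so the same entropy weight $\rho^{5-3m}$ up to normalization). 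The presentational differences — non-divergence versus divergence form, your $G$ versus their $(\varphi_1,\Phi_1)$ — do not change the substance of the argument, and your constant-tracking reproduces $C_0$ exactly.
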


\begin{proof}[Proof of Proposition \ref{lem:Lip}]
First, one can easily check that the space derivative of the pressure $f_N = \p_u {\Pr}_N$ satisfies
\be\label{eq:f_N}
\p_t f_N - \p_u\left(m(\rho_N)^{m-1} \p_u f_N + \left(f_N\right)^2 \right) = 0, \qquad \left(f_N\right)_{|_{t=0}} = \p_u {\Pr}_N^{\rm ini}. 
\ee
This equation has a maximum principle, so that $\|f_N\|_\infty \leq \|f_N^{\rm ini}\|_{\infty}$, which yields~\eqref{eq:p_Lip}
thanks to Assumption~\eqref{eq:lip}  and \eqref{eq:Lip-ini-reg}. A similar proof can be found in \cite[Prop. 15.4]{vaz}. Then it follows from~\eqref{eq:pressure-reg}  that 
\begin{equation}\label{eq:deriv}
\p_u \rho_N= \frac{\p_u {\Pr}_N}{m\,(\rho_N)^{m-2}},
\end{equation}
and estimate~\eqref{eq:rho_Lip} follows directly from~\eqref{eq:max-eps} and~\eqref{eq:p_Lip}.  
In order to get \eqref{eq:L2H2_p}, one multiplies~\eqref{eq:f_N} by $f_N$ and integrate over $[0,T]\times \T$, 
leading to 
\[
\frac12 \int_\T |f_N(T,u)|^2 \d u + \iint_{[0,T]\times\T} m\rho_N^{m-1} |\p_u f_N|^2 \d t \d u =
\frac12 \int_\T \left|\p_u {\Pr}_N^{\rm ini}\right|^2 \d u.
\]
Using \eqref{eq:max-eps} and~\eqref{eq:Lip-ini-reg} in the previous estimate (recalling that $\p_uf_N=\p_{uu}{\Pr}_N$) yields~\eqref{eq:L2H2_p}.
Let us finally establish~\eqref{eq:L2H2_rho}. To this end, remark first that $ {\Pr}_N = 2 \rho_N$ if $m=2$, so that 
\eqref{eq:L2H2_rho} directly follows from \eqref{eq:L2H2_p} in this case. Assume now that $m \geq 3$, then from \eqref{eq:pressure-reg} we get
\begin{align}
\p_{uu}\rho_N =& \frac1m \left( \rho_N^{2-m}\p_{uu}{\Pr}_N - (m-2) \rho_N^{1-m}\;\p_u \rho_N \; \p_u{\Pr}_N \right)\nonumber\\
= &  \frac1m \left( \rho_N^{2-m}\p_{uu}{\Pr}_N + \p_u \psi_1(\rho_N) \; \p_u{\Pr}_N \right) \label{eq:rho_uu}
\end{align}
where \cm{$\psi_1(\rho) = \rho^{2-m}$}.
Using $(a+b)^2 \le 2(a^2+b^2)$ and the previous estimates 
\eqref{eq:max-eps}, \eqref{eq:p_Lip}, and \eqref{eq:L2H2_p}, we obtain that 
\[
\big\|\p_{uu}\rho_N\big\|^2_{L^2((0,T)\times\T)}\leq \frac{2(C_{\rm Lip})^2}{m^2}\left(\frac{1}{2m}\left(\eps_N\right)^{5-3m}+
\big\|\p_u \psi_1(\rho_N)\big\|^2_{L^2((0,T)\times\T)}
\right).
\]
It remains to bound $\|\p_u\psi_1(\rho_N)\|^2_{L^2((0,T)\times\T)}$. To this end, 
multiply the PME~\eqref{eq:porous-smooth} by 
$$\varphi_1(\rho_N)= \frac{(m-2)^2}{m(4-3m)}\rho_N^{4-3m}$$ and integrate over $[0,T]\times\T$, leading to 
\be\label{eq:dpsiduL2_0}
\int_\T \Phi_1(\rho_N)(T,u)\d u + \big\|\p_u \psi_1(\rho_N)\big\|^2_{L^2((0,T)\times\T)} = \int_\T \Phi_1(\rho_N^{\rm ini})(u)\d u
\ee
with $\Phi_1(\rho) = \int_0^\rho \varphi_1(a) \d a = \frac{(m-2)^2}{m(4-3m)(5-3m)} \rho^{5-3m} \geq 0$ for $\rho>0$.
Then we deduce from~\eqref{eq:max-eps} that 
\be\label{eq:dpsiduL2}
 \big\|\p_u \psi_1(\rho_N)\big\|^2_{L^2((0,T)\times\T)}\leq \frac{(m-2)^2}{m(4-3m)(5-3m)}\left(\eps_N\right)^{5-3m}.
 \ee
 Estimate~\eqref{eq:L2H2_rho} directly follows from~\eqref{eq:dpsiduL2_0}--\eqref{eq:dpsiduL2}.

\end{proof}

\begin{proposition}\label{prop:xxx}
There exist three constants ${C_1}, C_2,C_3>0$ (that depend on $m$, $C_h$ and $C_{\rm Lip}$) such that, for any $N \in \N$,
\begin{align} 
\sup_{t\in[0,T]} \int_{\T} \big| \partial_{uu}\Pr_N(t,u)\big|^2 \d u &\leqslant C_1 (\eps_N)^{2-2m},
\label{eq:LinfH2_p}   \\
\iint_{[0,T]\times \T} \big|\partial_{uuu} \Pr_N(t,u)\big|^2 \; \d t\d u &\leqslant C_1 (\eps_N)^{3-3m} \label{eq:L2H3_p},\\
\sup_{t\in[0,T]} \int_{\T} \big| \partial_{uu}\rho_N(t,u)\big|^2 \d u &\leqslant C_2 (\eps_N)^{6-4m},
\label{eq:LinfH2_rho}   \\
\iint_{[0,T]\times \T} \big|\partial_{uuu} \rho_N(t,u)\big|^2 \; \d t\d u &\leqslant C_3 (\varepsilon_N)^{7-5m}. \label{eq:L2H3_rho}
\end{align}
\end{proposition}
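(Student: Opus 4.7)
The plan is to prove the two pressure bounds \eqref{eq:LinfH2_p}--\eqref{eq:L2H3_p} first, via a higher-order energy estimate on the equation \eqref{eq:f_N} satisfied by $f_N := \p_u \Pr_N$, and then to deduce the two density bounds \eqref{eq:LinfH2_rho}--\eqref{eq:L2H3_rho} from them by means of the chain rule linking the derivatives of $\rho_N$ and $\Pr_N$, exactly in the spirit of the passage from \eqref{eq:L2H2_p} to \eqref{eq:L2H2_rho} already carried out in Proposition \ref{lem:Lip}.

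For the pressure bounds, I would first differentiate \eqref{eq:f_N} once more in $u$ and, using the identities $m\rho_N^{m-2}\p_u\rho_N = f_N$ and $m\rho_N^{m-1} = (m-1)\Pr_N$, obtain the equation satisfied by $g := \p_u f_N = \p_{uu}\Pr_N$, namely
\[
\p_t g \; = \; (m-1)\,\Pr_N\,\p_{uu} g \; + \; 2m\, f_N\,\p_u g \; + \; (m+1)\, g^2.
\]
Multiplying by $g$, integrating over $\T$, integrating by parts repeatedly (using $\p_u f_N = g$ to rewrite $\int f_N g\,\p_u g$ as $-\tfrac12\int g^3$), and using the lower bound $\Pr_N \ge \frac{m}{m-1}\eps_N^{m-1}$ from \eqref{eq:max-eps}, I arrive at the identity
\[
\tfrac12\, \tfrac{d}{dt}\int_\T g^2\, \d u \; + \; (m-1)\int_\T \Pr_N\,(\p_u g)^2\, \d u \; = \; \tfrac{m+1}{2}\int_\T g^3\, \d u.
\]
The cubic source $\int g^3$ is the crucial item: since $g$ has zero mean on $\T$ (as the second derivative of a periodic function), the one-dimensional Gagliardo--Nirenberg inequality gives $\|g\|_\infty \le C\|g\|_2^{1/2}\|\p_u g\|_2^{1/2}$, hence $\int g^3 \le C\|g\|_2^{5/2}\|\p_u g\|_2^{1/2}$, and a weighted Young inequality lets me absorb the $\|\p_u g\|_2^{1/2}$--factor into the coercive term $\eps_N^{m-1}\|\p_u g\|_2^2$ at the cost of a multiplier of order $\eps_N^{(1-m)/3}$ in front of $\|g\|_2^{10/3}$. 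Coupled with the initial estimate
\[
\|g(0,\cdot)\|_\infty \; = \; \big\|\p_u\widetilde{\Pr}_N^{\rm ini}\star\p_u h_N\big\|_\infty \; \le \; C_{\rm Lip}\,\|\p_u h_N\|_1 \; = \; \frac{2\,C_{\rm Lip}\,C_h}{\eps_N},
\]
deduced from \eqref{eq:Lip-ini-reg} and \eqref{eq:h_n}, a nonlinear Gronwall (with a continuation/bootstrap argument to close the estimate on $[0,T]$) yields \eqref{eq:LinfH2_p}, and integration of the remaining dissipation term over $[0,T]$ produces \eqref{eq:L2H3_p}.

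For the density bounds, I would use the chain rule from $\Pr_N = \frac{m}{m-1}\rho_N^{m-1}$. From the identity \eqref{eq:rho_uu} already recorded in the proof of Proposition \ref{lem:Lip} one has the pointwise bound
\[
|\p_{uu}\rho_N|^2 \; \le \; C\,\rho_N^{4-2m}|\p_{uu}\Pr_N|^2 \; + \; C\,\rho_N^{2-2m}|\p_u\rho_N|^2|\p_u\Pr_N|^2 \; \le \; C\,\eps_N^{4-2m}|g|^2 \; + \; C\,\eps_N^{6-4m},
\]
in view of $\rho_N\ge\eps_N$, \eqref{eq:p_Lip} and \eqref{eq:rho_Lip}. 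Integrating over $\T$ and inserting the just-established \eqref{eq:LinfH2_p} yields \eqref{eq:LinfH2_rho}. A similar (slightly longer) expansion of $\p_{uuu}\rho_N$ as a sum of products of $\{\p_u^j\Pr_N\}_{j\le 3}$ and $\{\p_u^k\rho_N\}_{k\le 2}$, followed by squaring, integration over $[0,T]\times\T$, and insertion of \eqref{eq:p_Lip}, \eqref{eq:rho_Lip}, \eqref{eq:LinfH2_p}, \eqref{eq:L2H3_p} and \eqref{eq:LinfH2_rho}, delivers \eqref{eq:L2H3_rho} with the claimed exponent $\eps_N^{7-5m}$.

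The hard part will be the quadratic source $(m+1)g^2$ in the evolution of $g$: it sits at a critical scaling that prevents any direct maximum-principle bound on $\|g\|_\infty$, since the associated pointwise ODE $G'\le (m+1)G^2$ blows up at time $\sim 1/G(0) \sim \eps_N$, far shorter than the macroscopic horizon $T$. It therefore forces the nonlinear Gagliardo--Nirenberg--Young--Gronwall scheme above, whose closure on $[0,T]$ hinges on a tight interplay between the initial size $\eps_N^{-2}$, the effective diffusion rate $\eps_N^{m-1}$, and the target exponent $\eps_N^{2-2m}$; tracking the analogous balance through the expansion of $\p_{uuu}\rho_N$ constitutes the corresponding difficulty for \eqref{eq:L2H3_rho}.
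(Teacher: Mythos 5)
Your derivation of the equation satisfied by $g := \partial_{uu}\Pr_N$ and of the energy identity
\[
\tfrac12\,\tfrac{d}{dt}\int_\T g^2\,\d u + (m-1)\int_\T \Pr_N(\partial_u g)^2\,\d u = \tfrac{m+1}{2}\int_\T g^3\,\d u
\]
is correct, and it is an equivalent reformulation of the divergence-form identity the paper uses: you have simply integrated the first-order drift term by parts one more time, converting the cross term $\iint f_N g\,\partial_u g$ into the cubic $-\tfrac12\iint g^3$. The initial bound $\|g(0,\cdot)\|_\infty\le 2C_{\rm Lip}C_h/\eps_N$ and the passage from the $\Pr_N$ bounds to the $\rho_N$ bounds \eqref{eq:LinfH2_rho}--\eqref{eq:L2H3_rho} via the expansion of $\partial_{uu}\rho_N$ and $\partial_{uuu}\rho_N$ are both sound and essentially identical to what the paper does.

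The gap is in the proposed treatment of the cubic term. The Gagliardo--Nirenberg--Young step gives, after absorbing half the dissipation,
\[
\dot y \;\lesssim\; \eps_N^{(1-m)/3}\,y^{5/3}, \qquad y(t)=\|g(t)\|_{L^2(\T)}^2,\qquad y(0)\lesssim\eps_N^{-2},
\]
and this nonlinear ODE blows up at time $t^*\sim y(0)^{-2/3}/\eps_N^{(1-m)/3}\sim\eps_N^{(m+3)/3}\to 0$, far before the fixed horizon $T$. The bootstrap you invoke fails as well: feeding the desired a~priori bound $y\le A\eps_N^{2-2m}$ into the right-hand side yields a growth rate $\sim\eps_N^{(11-11m)/3}$, which after integration over $[0,T]$ exceeds the target $\eps_N^{2-2m}=\eps_N^{(6-6m)/3}$ by a diverging factor $\eps_N^{(5-5m)/3}$. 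So the ``Gagliardo--Nirenberg + Young + nonlinear Gronwall'' scheme does not close; the critical scaling you flagged at the end of your write-up is a genuine obstruction, not a hurdle to be overcome by more careful bookkeeping within that scheme.

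The paper avoids this entirely by \emph{not} integrating the drift term by parts a second time. It keeps the divergence form, multiplies by $2g_N$, and on the cross term $\iint 2(m+1) f_N g_N\,\partial_u g_N$ applies Young's inequality in the form
\[
2|g_N\,\partial_u g_N| \le \frac{(2m-1)\rho_N^{m-1}}{(m+1)C_{\rm Lip}}|\partial_u g_N|^2 + \frac{(m+1)C_{\rm Lip}}{(2m-1)\rho_N^{m-1}}|g_N|^2,
\]
together with $|f_N|\le C_{\rm Lip}$. The first piece is absorbed by the coercive term $2m\iint\rho_N^{m-1}|\partial_u g_N|^2$; the second produces the \emph{quadratic} lower-order term $\iint\rho_N^{1-m}|g_N|^2$, which by $\rho_N\ge\eps_N$ and the already-established estimate~\eqref{eq:L2H2_p} on $\iint|\partial_{uu}\Pr_N|^2$ is bounded by $\tfrac{C_{\rm Lip}^2}{2m}\eps_N^{2-2m}$ \emph{without any Gronwall iteration at all}. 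The crucial structural ingredient you are missing is therefore the use of the lower-order $L^2_tL^2_u$ bound from Proposition~\ref{lem:Lip}: it converts the would-be source term into an absolute constant of the correct size, whereas your cubic reformulation discards the control on $\iint|g_N|^2$ that makes the argument work.
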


\begin{proof}[Proof of Proposition \ref{prop:xxx}]
For any $N\in\N$, we set $g_N = \p_u f_N = \p_{uu}\Pr_N$. It is a smooth solution to the problem
\be\label{eq:g_n}
\p_t g_N - \p_u \left( m (\rho_N)^{m-1} \p_u g_N + (m+1) f_N g_N \right) = 0, \qquad 
(g_N)_{|_{t=0}} = \p_{uu} \Pr_N^{\rm ini}.
\ee
Multiplying~\eqref{eq:g_n} by $2 g_N$ and integrating over $[0,t^\star]\times \bbT $ for some arbitrary $t^\star \in [0,T]$ 
provides 
\begin{multline*}
\int_\bbT |g_N|^2(t^\star,u) \d u -  \int_\bbT \big|\p_{uu} \Pr_N^\text{ini}\big|^2 \d u \\
+  \iint_{[0,t^\star]\times \bbT} 2m (\rho_N)^{m-1} | \p_u g_N |^2 \;\d t\d u  
+ \iint_{[0,t^\star]\times \bbT} 2(m+1) f_N g_N \p_u g_N\; \d t\d u  = 0.
\end{multline*}
It follows from the inequality 
\[
2 |g_N \p_u g_N| \leq \frac{(2m-1) (\rho_N)^{m-1}}{(m+1)C_{\rm Lip}} \left|\p_u g_N\right|^2 
+
\frac{(m+1)C_{\rm Lip}}{(2m-1) (\rho_N)^{m-1}} \left|g_N\right|^2 
\]
and from estimate~\eqref{eq:p_Lip} that 
\begin{multline*}
 \iint_{[0,t^\star]\times \bbT} 2m (\rho_N)^{m-1} | \p_u g_N |^2 \d t\d u  
+ \iint_{[0,t^\star]\times \bbT} 2(m+1) f_N g_N \p_u g_N \d t\d u \\
\geq 
 \iint_{[0,t^\star]\times \bbT} (\rho_N)^{m-1} | \p_u g_N |^2 \d t\d u  
 -  \cm{\frac{(m+1)^2(C_{\rm Lip})^2}{2m-1}} \iint_{[0,t^\star]\times \bbT} \rho_N^{1-m} |g_N|^2 \d t \d u.
\end{multline*}
Therefore, we obtain that 
\begin{multline*}
\int_\bbT |g_N|^2(t^\star,u) \d u +  \iint_{[0,t^\star]\times \bbT} (\rho_N)^{m-1} | \p_u g_N |^2 \d t\d u
\\\leq 
 \int_\bbT \big|\p_{uu} \Pr_N^\text{ini}\big|^2 \d u
+ \cm{\frac{(m+1)^2(C_{\rm Lip})^2}{2m-1}} \iint_{[0,t^\star]\times \bbT} \rho_N^{1-m} |g_N|^2 \d t \d u.
\end{multline*}
We deduce from~\eqref{eq:max-eps} and~\eqref{eq:L2H2_p} that 
\[
 \iint_{[0,t^\star]\times \bbT} \rho_N^{1-m} |g_N|^2 \d t \d u \leq \frac{(C_{\rm Lip})^2}{2m} (\eps_N)^{2-2m}, 
\]
whereas the definition~\eqref{eq:rho-ini} of $\Pr_N^{\rm ini}$ ensures that 
\[
\left\|\p_{uu}\Pr_N^{\rm ini}\right\|_\infty \leq C_{\rm Lip} \big\|\p_y h_N\big\|_{1} = \frac{2 C_h C_{\rm Lip}}{\eps_N},
\]
where the last equality follows from \eqref{eq:h_n}.
Since $\eps_N \leq \frac12$, and since $\rho_N \geq \eps_N$, we obtain that~\eqref{eq:LinfH2_p} and \eqref{eq:L2H3_p} 
hold for
\[
C_1 = (C_{\rm Lip})^2\left(\left(C_h\right)^2 2^{2m-3} +  \cm{\frac{(m+1)^2}{2m(2m-1)}(C_{\rm Lip})^2}\right).
\]
Using \eqref{eq:deriv}, formula~\eqref{eq:rho_uu} can be recast into 
\[
\p_{uu}\rho_N = \frac1m \left( \rho_N^{2-m}\p_{uu}{\Pr}_N - \cm{\frac{m-2}{m}} \;\rho_N^{3-2m}\;\left(\p_u{\Pr}_N\right)^2 \right).
\]
Therefore, using $(a+b)^2 \leq 2(a^2 + b^2)$ again as well as estimates~\eqref{eq:max-eps}, 
\eqref{eq:p_Lip} and \eqref{eq:LinfH2_p}, we obtain that 
\[
\sup_{t \in [0,T]} \int_\T |\p_{uu}\rho_N(t,u)|^2 \d u \leq \frac{2 C_1}{m^2} (\eps_N)^{\cm{6-4m}} + 
2\left(\frac{m-2}{\cm{m^2}}\right)^2\left(C_{\rm Lip}\right)^2(\eps_N)^{6-4m}.
\]
Estimate~\eqref{eq:LinfH2_rho} follows from the above inequality, the constant $C_2$ being 
given by 
\[
C_2 =  \frac{2 C_1}{\cm{m^2}} + 2\left(\frac{m-2}{\cm{m^2}}\right)^2\left(C_{\rm Lip}\right)^2.
\]
Finally, from \eqref{eq:rho_uu}, the third derivative of $\rho_N$ is the sum of four terms: 
\begin{align}
\partial_{uuu}\rho_N = & \frac{1}{m} \; \rho_N^{2-m}\; \partial_{uuu}\Pr_N \label{eq:term11}\\
& + \frac{2(2-m)}{m}\; \rho_N^{1-m} \; \partial_u\rho_N\; \partial_{uu}\Pr_N \label{eq:term12}\\
& + \frac{(2-m)}{m}\; \rho_N^{1-m}\; \partial_{uu}\rho_N \; \partial_u \Pr_N \label{eq:term13}\\
& + \frac{(2-m)}{m}\; \partial_u\psi_2(\rho_N)\; \partial_u\rho_N\; \partial_u\Pr_N, \label{eq:term14}
\end{align}
where $\psi_2(\rho):=\rho^{1-m}$. Then, in order to bound the integral $\iint_{[0,T]\times \T} |\p_{uuu}\rho_N(t,u)|^2\; \d t\d u$, we use the inequality $(a_1+a_2+a_3+a_4)^2 \leq 4(a_1^2+a_2^2+a_3^2+a_4^2)$, and we compute the contribution of each term \eqref{eq:term11}--\eqref{eq:term14} using the previous estimates, as follows: from \eqref{eq:max-eps} and \eqref{eq:L2H3_p}, we have
\[\iint_{[0,T]\times \T} \left(\frac{1}{m} \; \rho_N^{2-m}\; \partial_{uuu}\Pr_N\right)^2 \d t\d u\leq \frac{C_1}{m^2}(\varepsilon_N)^{7-5m}.\]
Second, from \eqref{eq:max-eps}, \eqref{eq:rho_Lip} and \eqref{eq:L2H2_p}, we get
\[\iint_{[0,T]\times \T} \left(\frac{2(2-m)}{m}\; \rho_N^{1-m} \; \partial_u\rho_N\; \partial_{uu}\Pr_N\right)^2\d t\d u\leq \frac{2(2-m)^2(C_{\rm Lip})^{4}}{m^5}(\varepsilon_N)^{7-5m}.\]
In the same way, from \eqref{eq:max-eps}, \eqref{eq:L2H2_rho} and \eqref{eq:p_Lip},
\[\iint_{[0,T]\times \T} \left(\frac{(2-m)}{m}\; \rho_N^{1-m}\; \partial_{uu}\rho_N \; \partial_u \Pr_N\right)^2 \d t\d u\leq \frac{(2-m)^2\;C_0(C_{\rm Lip})^2}{m^2}(\varepsilon_N)^{7-5m}.\]
It remains to estimate the contribution of \eqref{eq:term14}. For this term, we use the same strategy as in the end of the proof of \eqref{eq:L2H2_rho}. First, we bound it from \eqref{eq:p_Lip} and \eqref{eq:rho_Lip} as follows:
\begin{multline*}\iint_{[0,T]\times \T} \left(\frac{(2-m)}{m}\; \partial_u\psi_2(\rho_N)\; \partial_u\rho_N\; \partial_u\Pr_N\right)^2 \d t\d u\\\leq \frac{(2-m)^2(C_{\rm Lip})^4}{m^{4}}(\varepsilon_N)^{4-2m} \big\|\partial_u\psi_2(\rho_N)\big\|^2_{L^2((0,T)\times\T)} .\end{multline*}
Finally, to estimate the $L^2$ norm $\|\p_u\psi_2(\rho_N) \|_{L^2((0,T)\times\T)}^2$, we multiply the PME \eqref{eq:porous-smooth} by 
\[\varphi_2(\rho_N) = \frac{(1-m)^2}{m(2-3m)}\rho_N^{2-3m}\] and integrate over $[0,T]\times\T$. This easily leads to 
\[\big\|\partial_u\psi_2(\rho_N)\big\|^2_{L^2((0,T)\times\T)}\leq \frac{(1-m)^2}{m(2-3m)(3-3m)}(\varepsilon_N)^{3-3m}. \] 
Finally, collecting all the contributions coming from \eqref{eq:term11}--\eqref{eq:term14} (which all are of the same order), we obtain the bound \eqref{eq:L2H3_rho} with \[C_3= \frac{4}{m^2}\left( C_1 + \frac{2(2-m)^2(C_{\rm Lip})^2}{m^3} + (2-m)^2C_0(C_{\rm Lip})^2 + \frac{(2-m)^2(1-m)^2(C_{\rm Lip})^4}{m(2-3m)(3-3m)} \right).\]
\end{proof}

We conclude this section by getting some technical bounds on the norms of $\lambda_N$ and its derivatives, where $\lambda_N$ has been defined in function of $\rho_N$ in \eqref{eq:lambdaN}.

\begin{proposition}\label{prop:lambdaN}
For any $N \in \bb N$, 
\begin{equation}\sup_{(t,u) \in [0,T]\times \T} \big|\partial_u \lambda_N(t,u)\big| \leqslant \cm{\frac{2C_{\rm Lip}}{m} (\varepsilon_N)^{1-m}}.\label{eq:lambdabound1}\end{equation}
 Moreover, \cm{there exists $C>0$ which depends on $m,C_h$ and $C_{\rm Lip}$} such that
\begin{equation}
\label{eq:lambdabound2}
\sup_{t\in[0,T]}\int_{\T} \big|\partial_{uu}\lambda_N\big|^2 (t,u) \; \d u \leqslant \cm{C(\varepsilon_N)^{4-4m}}\end{equation}
and finally 
\begin{align}
\label{eq:lambdabound3}
\iint_{[0,T]\times\T} \big|\partial_{uuu} \lambda_N\big|^2(t,u) \; \d t \d u &\leqslant \cm{C(\varepsilon_N)^{6-6m}}\\
\iint_{[0,T]\times\T} \big|\partial_{u}\partial_t \lambda_N\big|^2(t,u) \; \d t \d u &\leqslant \cm{C(\varepsilon_N)^{6-6m}}. \label{eq:lambdabound4}
\end{align}
\end{proposition}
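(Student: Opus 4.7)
The approach is direct calculation. Writing $F(\rho) = \log\bigl(\rho(1-\alpha)/[\alpha(1-\rho)]\bigr)$, the function $\lambda_N$ reads $\lambda_N = F(\rho_N)$ with $F'(\rho) = 1/[\rho(1-\rho)]$. The key feature is the following lower bound that follows from Proposition~\ref{prop:max}: setting $g_N := \rho_N(1-\rho_N)$, we have
\[
g_N(t,u) \geq \varepsilon_N(1-\varepsilon_N) \geq \varepsilon_N/2 \qquad \text{for all } (t,u) \in [0,T]\times \T,
\]
since $\varepsilon_N \in (0,1/2)$. Every derivative of $\lambda_N$ can be written, via the chain and product rules, as a rational expression whose numerator is a polynomial in $\rho_N, \partial_u\rho_N, \partial_{uu}\rho_N, \partial_{uuu}\rho_N$ (and, for \eqref{eq:lambdabound4}, $\partial_t \rho_N$) and whose denominator is a power of $g_N$. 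Each such piece is then estimated using the lower bound on $g_N$ together with the derivative controls from Propositions~\ref{lem:Lip} and \ref{prop:xxx}.

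For \eqref{eq:lambdabound1}: $\partial_u \lambda_N = \partial_u \rho_N / g_N$, so $\|\partial_u \lambda_N\|_\infty \leq (2/\varepsilon_N)\|\partial_u \rho_N\|_\infty$, and plugging \eqref{eq:rho_Lip} yields the claimed rate $(\varepsilon_N)^{1-m}$. For \eqref{eq:lambdabound2}: differentiating once more,
\[
\partial_{uu}\lambda_N \;=\; \frac{\partial_{uu}\rho_N}{g_N} \;-\; \frac{(1-2\rho_N)(\partial_u\rho_N)^2}{g_N^2}.
\]
Squaring, using $(a+b)^2 \leq 2(a^2+b^2)$, $|1-2\rho_N|\leq 1$ and $g_N^{-k}\leq (2/\varepsilon_N)^k$, the first term is controlled at each time by \eqref{eq:LinfH2_rho} (giving $(\varepsilon_N)^{4-4m}$), while the second term is bounded pointwise by $\|\partial_u \rho_N\|_\infty^4 / g_N^4$, which by \eqref{eq:rho_Lip} gives $(\varepsilon_N)^{8-4m-4} = (\varepsilon_N)^{4-4m}$. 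The supremum in $t$ of the resulting $L^2(\T)$-norm is therefore $O((\varepsilon_N)^{4-4m})$.

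For \eqref{eq:lambdabound3}: one more differentiation produces four terms with respective denominators $g_N$, $g_N^2$, $g_N^2$, $g_N^3$ and numerators $\partial_{uuu}\rho_N$, $\partial_u \rho_N\, \partial_{uu}\rho_N$, $(\partial_u\rho_N)^3$, $(\partial_u\rho_N)^3$. Integrating in $(t,u)$ and using \eqref{eq:rho_Lip}, \eqref{eq:L2H2_rho}, \eqref{eq:LinfH2_rho}, \eqref{eq:L2H3_rho} together with the $g_N \geq \varepsilon_N/2$ bound, the first three terms produce $(\varepsilon_N)^{5-5m}$ (better than needed) while the last gives $64(C_{\rm Lip}/m)^6(\varepsilon_N)^{6-6m}$, which is the dominant contribution. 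For \eqref{eq:lambdabound4}: start from $\partial_t \lambda_N = \partial_t \rho_N / g_N$ and use the PME in the form $\partial_t \rho_N = m\rho_N^{m-1}\partial_{uu}\rho_N + m(m-1)\rho_N^{m-2}(\partial_u \rho_N)^2$, so that
\[
\partial_u \partial_t \lambda_N = \frac{\partial_{uuu}(\rho_N^m)}{g_N} - \frac{(1-2\rho_N)(\partial_u\rho_N)\partial_t\rho_N}{g_N^2},
\]
where $\partial_{uuu}(\rho_N^m)$ expands into terms involving $\rho_N^{m-1}\partial_{uuu}\rho_N$, $\rho_N^{m-2}\partial_u\rho_N\,\partial_{uu}\rho_N$, and $\rho_N^{m-3}(\partial_u\rho_N)^3$. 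Using $\rho_N \leq 1$ (so the powers $\rho_N^{m-1}, \rho_N^{m-2}$ are $\leq 1$) and bounding the only possibly singular factor $\rho_N^{m-3}$ by $(\varepsilon_N)^{m-3}$ when $m<3$ (harmless otherwise), each resulting integrand is then controlled by the previous norm estimates exactly as in \eqref{eq:lambdabound3}, producing again the rate $(\varepsilon_N)^{6-6m}$.

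The main bookkeeping obstacle is \eqref{eq:lambdabound4}: keeping track of the exponents of $\varepsilon_N$ and verifying that the worst term still obeys the required $(\varepsilon_N)^{6-6m}$ rate, in particular checking that the use of the PME to eliminate $\partial_t\rho_N$ does not produce terms of order worse than $(\partial_u \rho_N)^3/g_N^3$. All other steps are routine once the chain-rule expansions are written out.
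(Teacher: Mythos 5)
Your proposal is correct and follows essentially the same direct-computation strategy as the paper: expand the derivatives of $\lambda_N=F(\rho_N)$ via the chain rule, use $\rho_N(1-\rho_N)\geq\varepsilon_N/2$, and plug in the bounds of Propositions~\ref{lem:Lip} and~\ref{prop:xxx}. The only minor stylistic difference is in \eqref{eq:lambdabound4}: the paper first differentiates the identity $\partial_t\lambda_N = m\rho_N^{m-1}\partial_{uu}\lambda_N + m\rho_N^{m-1}(m-(m+1)\rho_N)(\partial_u\lambda_N)^2$ so as to reuse the just-established bounds \eqref{eq:lambdabound1}--\eqref{eq:lambdabound3}, whereas you work from $\partial_t\lambda_N = \partial_t\rho_N/[\rho_N(1-\rho_N)]$ and the PME directly; both routes give the same $(\varepsilon_N)^{6-6m}$ rate (and for $m=2$ the potentially singular $\rho_N^{m-3}$ factor is harmless since its coefficient $m(m-1)(m-2)$ vanishes).
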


\begin{proof}[Proof of Proposition \ref{prop:lambdaN}]\cm{Using only the definition \eqref{eq:lambdaN}, one can easily prove the following:
\begin{lemma} \label{prop:derivatives}
We have
\begin{align}
\partial_u \lambda_N & = \frac{\p_u\rho_N}{\rho_N}+\frac{\p_u\rho_N}{1-\rho_N}= \frac{\partial_u \rho_N}{\rho_N(1-\rho_N)} \vphantom{\Bigg[} \label{eq:diff1}\\
\partial_{uu} \lambda_N &= (\partial_{uu}\rho_N)\bigg(\frac{1}\rho_N + \frac{1}{1-\rho_N}\bigg)+ (\partial_u\rho_N)^2\bigg(\frac{1}{(1-\rho_N)^2} - \frac{1}{\rho_N^2}  \bigg) \vphantom{\Bigg[} \label{eq:diff2}\\
\partial_{uuu}\lambda_N &= (\partial_{uuu}\rho_N)\bigg(\frac{1}\rho_N + \frac{1}{1-\rho_N}\bigg) + 3 (\partial_{uu} \rho_N)( \partial_u \rho_N)\bigg(\frac{1}{(1-\rho_N)^2} - \frac{1}{\rho_N^2}  \bigg) \\
& \quad +2 (\partial_u\rho_N)^3 \bigg(\frac{1}{\rho_N^3}+\frac{1}{(1-\rho_N)^3}\bigg). \vphantom{\Bigg[} \label{eq:diff3}
\end{align}
Therefore, if $\rho_N$ is solution to the porous medium equation $\partial_t \rho_N = \partial_{uu}(\rho_N^m)$ then 
\begin{equation} \partial_t \lambda_N = m\rho_N^{m-1}\; \partial_{uu} \lambda_N + m\rho_N^{m-1}(m-(m+1)\rho_N)\; (\partial_u \lambda_N)^2. \label{eq:identity1}\end{equation}
\end{lemma}
Then, from \eqref{eq:diff1} and \eqref{eq:max-eps}, we have
\[ \|\p_u\lambda_N\|_\infty \leq   \frac{2\|\p_u\rho_N\|_\infty}{\varepsilon_N}.\] Therefore, the first bound \eqref{eq:lambdabound1} is straightforward from Proposition \ref{lem:Lip}. In the same way, using Lemma \ref{prop:derivatives} together with \eqref{eq:max-eps} and the inequality $(a+b)^2 \leq 2(a^2 + b^2)$, we get, for any $t \in [0,T]$, that 
\[
\big\|\partial_{uu}\lambda_N(t,\cdot)\big\|_{L^2(\T)}^2 \leqslant 8\Bigg( \frac{\big\|\partial_{uu}\rho_N(t,\cdot)\big\|_{L^2(\T)}^2}{(\varepsilon_N)^2} + \frac{\big\|\partial_u\rho_N\big\|_{\infty}^4}{(\varepsilon_N)^4}\Bigg),
\]
therefore \eqref{eq:lambdabound2} follows from Proposition \ref{lem:Lip} and Proposition \ref{prop:xxx}, with $C$ that satisfies $C \geq 8(C_2+(C_{\rm Lip})^4/m^4)$. 
Also, from Lemma \ref{prop:derivatives}, from \eqref{eq:max-eps} and the inequality $(a+b+c)^2 \leq 3(a^2 + b^2+c^2)$, we have\[ \big|\partial_{uuu}\lambda_N\big|^2\leqslant 9\frac{\big|\partial_{uuu}\;\rho_N\big|^2}{(\varepsilon_N)^2} +36 \frac{\big|\partial_{uu}\rho_N\big|^2 \; \big|\partial_u \rho_N\big|^2}{(\varepsilon_N)^4} +16 \frac{\big|\partial_u\rho_N\big|^6}{(\varepsilon_N)^6}.\]
Then from Proposition \ref{lem:Lip} and Proposition \ref{prop:xxx}, we  easily obtain
\begin{align*} \iint_{[0,T]\times\T} \big|\partial_{uuu} \lambda_N\big|^2(t,u) \; \d t \d u & \leq \left(9C_3 + \frac{36(C_{\rm Lip})^2C_0}{m^2}\right)(\varepsilon_N)^{5-5m}+\frac{16(C_{\rm Lip})^6}{m^6}(\varepsilon_N)^{6-6m} \\
&\leq C (\varepsilon_N)^{6-6m},\end{align*}
with $C$ that satisfies 
\[ C \geq  \frac{1}{2^{m-1}}\left(9C_3 + \frac{36 (C_{\rm Lip})^2C_0}{m^2}\right) + \frac{16(C_{\rm Lip})^6}{m^6},\]
so that \eqref{eq:lambdabound3} is proved. Finally, to get \eqref{eq:lambdabound4}, we use \eqref{eq:identity1} together with $|\rho_N|\leq 1$, and we obtain that there exists a constant $\kappa=\kappa(m)$ which depends only on $m$ such that
\[ \big|\partial_u\partial_t\lambda_N\big|^2 \leqslant 
\kappa(m)\Big(\big|\partial_u\rho_N\big|^2 \big|\partial_{uu}\lambda_N\big|^2 + \big|\partial_{uuu}\lambda_N\big|^2 +  \big|\partial_u\lambda_N\big|^4 \big|\partial_u\rho_N\big|^2+ \big|\partial_u\lambda_N\big|^2 \big|\partial_{uu}\lambda_N\big|^2\Big), \]
and therefore, we let the reader conclude from Proposition \ref{lem:Lip} and the  three first bounds \eqref{eq:lambdabound1}, \eqref{eq:lambdabound2} and \eqref{eq:lambdabound3}, in order to get \eqref{eq:lambdabound4}.} 
\end{proof}

\section{Relative entropy estimates}\label{sec:relative}

In this section we prove Lemma \ref{l:smallentropy} and Proposition \ref{thm:entropy}. 

\subsection{Proof of Lemma~\ref{l:smallentropy}}\label{s:smallentropy}

We say that a configuration $\eta \in \{0,1\}^{\T_N}$ is $\rho$-\emph{compatible} with a profile $\rho:\T\to [0,1]$ if
\[ \eta(x) = \rho\big(\tfrac x N\big) \quad \text{ whenever } \rho\big(\tfrac x N\big)=0 \text{ or } 1. \]
Recall Definition \ref{eq:nurhoN}. Since $\rho_N^{\rm ini}\in [\varepsilon_N,1-\varepsilon_N]$, we can easily compute
\begin{align*}
\mathcal{H}_N(0)=&\sum_{\eta\ \rho^{\mathrm{ini}}\text{--comp.}}\nu_{\rho^{\rm ini}}^N(\eta)\Bigg\{\sum_{x\colon\rho^{\rm ini}(\frac x N)=0}\log\frac{1}{1-\rho^{\rm ini}_N(\frac x N)}+\sum_{x\colon\rho^{\rm ini}(\frac x N)=1}\log\frac{1}{\rho^{\rm ini}_N(\frac x N)}\\
&+\sum_{x\colon\rho^{\rm ini}(\frac x N)\in(0,1)}\left(\eta(x)\log\frac{\rho^{\rm ini}(\frac x N)}{\rho^{\rm ini}_N( \frac x N)}+(1-\eta(x))\log\frac{1-\rho^{\rm ini}(\frac x N)}{1-\rho^{\rm ini}_N(\frac x N)}\right)\Bigg\},
\end{align*}
where the first sum is over configurations $\eta\in\{0,1\}^{\T_N}$ compatible with the density profile $\rho^{\rm ini}$. Then, 
\begin{align}
\mathcal{H}_N(0)=&\sum_{x\colon\rho^{\rm ini}(\frac x N)=0}\log\frac{1}{1-\rho^{\rm ini}_N(\frac x N)}+\sum_{x\colon\rho^{\rm ini}(\frac x N)=1}\log\frac{1}{\rho^{\rm ini}_N(\frac x N)}\label{eq:smallent1}\\
&+\sum_{x\colon\rho^{\rm ini}(\frac x N)\in(0,1)}\left(\rho^{\rm ini}\big(\tfrac x N\big)\log\frac{\rho^{\rm ini}(\frac x N)}{\rho^{\rm ini}_N(\frac x N)}+\big(1-\rho^{\rm ini}\big(\tfrac x N\big)\big)\log\frac{1-\rho^{\rm ini}(\frac x N)}{1-\rho^{\rm ini}_N(\frac x N)}\right)\label{eq:smallent2}
\end{align}
The lemma then follows from \eqref{eq:conv}: indeed, there exists $C>0$ such that for all $x\in\T_N$, 
\begin{eqnarray}
\rho^{\rm ini}\big(\tfrac x N\big)=0&\Longrightarrow&\bigg|\log\frac{1}{1-\rho^{\rm ini}_N\big(\tfrac x N\big)}\bigg|\leq C \cm{(\varepsilon_N)^{\frac{1}{m-1}}},\\
\rho^{\rm ini}\big(\tfrac x N\big)=1&\Longrightarrow&\bigg|\log\frac{1}{\rho^{\rm ini}_N\big( \tfrac x N\big)}\bigg|\leq C\cm{(\varepsilon_N)^{\frac{1}{m-1}}}.
\end{eqnarray}
Therefore, we can bound \eqref{eq:smallent1} by $CN\cm{(\varepsilon_N)^{\frac{1}{m-1}}}$. In order to bound the first term in \eqref{eq:smallent2}, note that (using again \eqref{eq:conv}) there exists $C>0$ such that 
\begin{itemize}
\item if $\rho^{\rm ini}(\frac x N)\leq 2\cm{C_{\rm ini}(\varepsilon_N)^{\frac{1}{m-1}}}$, then \[\rho^{\rm ini}\big(\tfrac x N\big)\log\frac{\rho^{\rm ini}(\frac x N)}{\rho^{\rm ini}_N(\frac x N)}\leq C\cm{(\varepsilon_N)^{\frac{1}{m-1}}}|\log\varepsilon_N|,\]
\item if $\rho^{\rm ini}(\frac x N)> 2\cm{C_{\rm ini}(\varepsilon_N)^{\frac{1}{m-1}}}$, then \[\left|\frac{\rho^{\rm ini}(\frac x N)-\rho^{\rm ini}_N(\frac x N)}{\rho^{\rm ini}(\frac x N)}\right|<\frac12\] and
\begin{align*}
\left|\rho^{\rm ini}\big(\tfrac x N\big)\log\frac{\rho^{\rm ini}(\frac x N)}{\rho^{\rm ini}_N(\frac x N)}\right|&=\left|\rho^{\rm ini}\big(\tfrac x N\big)\log\bigg(1-\frac{\rho^{\rm ini}(\frac x N)-\rho^{\rm ini}_N(\frac x N)}{\rho^{\rm ini}(\frac x N)}\bigg)\right|\\
&\leq C\left|\rho^{\rm ini}\big(\tfrac x N\big)-\rho^{\rm ini}_N\big( \tfrac x N\big)\right|\leq C\cm{C_{\rm ini}(\varepsilon_N)^{\frac{1}{m-1}}}. \vphantom{\Bigg(}
\end{align*}
\end{itemize}
The second term in \eqref{eq:smallent2} is bounded similarly. Lemma~\ref{l:smallentropy} follows.

\bigskip

We now turn to the proof of Proposition \ref{thm:entropy}, which is the central result of this work.

\subsection{Entropy production}
\label{sec:proof}

First of all, the following well-known entropy estimate is due to Yau \cite{yau}:
\[
\partial_t \mc H_N(t) \leqslant \int \bigg\{ \frac{N^2 \mathcal{L}_N \psi_t^N}{\psi_t^N}-\partial_t \log(\psi_t^N) \bigg\} \d\mu_{t}^N.
\]
Let us denote \begin{align*}
h(\eta)&:
=\sum_{y=-m+1}^0\prod_{z=y}^{y+m-1}\eta(z)-\sum_{y=-m+1}^{-1}\prod_{\substack{z=y\\z\neq 0}}^{y+m}\eta(z),\\
g(\eta)&:=\frac{1}{2}\cm{r_{0,1}(\eta)}(\eta(0)-\eta(1))^2.
\end{align*}
Note that $\overline g(\rho)=m\rho^m(1-\rho)$ and $\overline h(\rho)=\rho^m$, and also $|g(\eta)|\leqslant m$ and $|h(\eta)|\leqslant 2m$ for any $\eta$. 
We first prove the following technical result:
\begin{lemma} \label{lem:entropy} Under Assumption \ref{ass:eps}, namely assuming $(\varepsilon_N)^{\cm{6m-6}}N\to \infty$, we have
\begin{align}
& \int \bigg\{\frac{N^2 \mathcal{L}_N \psi_t^N}{\psi_t^N} -\partial_t \log(\psi_t^N)\bigg\}\d\mu_{t}^N \notag \\
&\; = \int \sum_{x \in \T_N} \partial_{uu} \lambda_N\big(t,\tfrac{x}{N}\big) \bigg\{\tau_x h(\eta)-\overline h\big({\rho}_N\big(t,\tfrac{x}{N}\big)\big)- \overline h'\big({\rho}_N\big(t,\tfrac{x}{N}\big)\big)\Big(\eta(x)-{\rho}_N\big(t,\tfrac{x}{N}\big) \Big)\bigg\} \d\mu_{t}^N \label{eq:taylor1}\\
&  \; + \int \sum_{x \in \T_N} (\partial_u \lambda_N)^2\big(t,\tfrac{x}{N}\big) \bigg\{\tau_x g(\eta)-\overline g\big({\rho}_N\big(t,\tfrac{x}{N}\big)\big)- \overline g'\big({\rho}_N\big(t,\tfrac{x}{N}\big)\big)\Big(\eta(x)-{\rho}_N\big(t,\tfrac{x}{N}\big) \Big)\bigg\}\d\mu_{t}^N \label{eq:taylor2}\\
& \; + \delta(t,N) \vphantom{\bigg[}, \notag
\end{align}
where
 \[\frac1N\bigg|\int_0^T\delta(t,N)\d t\bigg|\xrightarrow[N\to\infty]{}0.\]
\end{lemma}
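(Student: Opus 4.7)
The plan is to compute the two ingredients $\partial_t\log\psi_t^N$ and $N^2\mathcal{L}_N\psi_t^N/\psi_t^N$ separately, match them with the PME identity \eqref{eq:identity1} to produce the RHS up to manifestly small error, and then carefully bound that error using the norm estimates of Proposition~\ref{prop:lambdaN}.

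For the time derivative, since $\log\psi_t^N(\eta) = -\log\mathsf{Z}_t^N + \sum_x \eta(x)\lambda_N(t,x/N)$ with $\log\mathsf{Z}_t^N = \sum_x\log\bigl((1-\alpha)/(1-\rho_N(t,x/N))\bigr)$, a direct computation using $\partial_t\rho_N = \rho_N(1-\rho_N)\partial_t\lambda_N$ yields
\[
\partial_t\log\psi_t^N(\eta) = \sum_{x\in\T_N}\bigl(\eta(x) - \rho_N(t,\tfrac{x}{N})\bigr)\,\partial_t\lambda_N(t,\tfrac{x}{N}).
\]
I would then substitute the PME identity \eqref{eq:identity1} $\partial_t\lambda_N = \overline{h}'(\rho_N)\partial_{uu}\lambda_N + \overline{g}'(\rho_N)(\partial_u\lambda_N)^2$ (using $\overline{h}(\rho)=\rho^m$, $\overline{g}(\rho)=m\rho^m(1-\rho)$), which generates precisely the two linear corrections $-\overline{h}'(\rho_N)(\eta(x)-\rho_N)$ and $-\overline{g}'(\rho_N)(\eta(x)-\rho_N)$ featured in \eqref{eq:taylor1}--\eqref{eq:taylor2}.

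For the generator term, the product structure of $\psi_t^N$ gives $\psi_t^N(\eta^{x,x+1})/\psi_t^N(\eta) = \exp\bigl((\eta(x+1)-\eta(x))\Delta_x\lambda_N\bigr)$ with $\Delta_x\lambda_N := \lambda_N(t,x/N) - \lambda_N(t,(x+1)/N)$, so that
\[
\frac{\mathcal{L}_N\psi_t^N}{\psi_t^N}(\eta) = \sum_x r_{x,x+1}(\eta)(\eta(x)-\eta(x+1))^2\bigl[e^{(\eta(x+1)-\eta(x))\Delta_x\lambda_N}-1\bigr].
\]
Taylor expanding $e^u-1$ to second order produces a linear contribution $r_{x,x+1}(\eta(x+1)-\eta(x))\Delta_x\lambda_N$ and a quadratic contribution $\tau_x g(\eta)(\Delta_x\lambda_N)^2$, thanks to $r_{x,x+1}(\eta(x)-\eta(x+1))^2 = 2\tau_x g(\eta)$. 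The crucial gradient identity $r_{x,x+1}(\eta)(\eta(x)-\eta(x+1)) = \tau_x h(\eta) - \tau_{x+1}h(\eta)$ (verified by expanding the definition of $h$, and which encodes $\overline{h}(\rho)=\rho^m$) allows an Abel summation on the linear piece; combined with $\Delta_x\lambda_N = -N^{-1}\partial_u\lambda_N(t,x/N) - (2N^2)^{-1}\partial_{uu}\lambda_N(t,x/N)+O(N^{-3})$, one extracts the two principal contributions $\sum_x\partial_{uu}\lambda_N\,\tau_x h$ and $\sum_x (\partial_u\lambda_N)^2\,\tau_x g$.

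Combining everything reproduces the RHS of the claim except for a purely deterministic remainder $\sum_x[\partial_{uu}\lambda_N(t,x/N)\overline{h}(\rho_N) + (\partial_u\lambda_N(t,x/N))^2\overline{g}(\rho_N)]$, which must be absorbed into $\delta(t,N)$. The key observation here is that the continuous counterpart of this sum vanishes identically: one integration by parts combined with $\partial_u\rho_N = \rho_N(1-\rho_N)\partial_u\lambda_N$ and the algebraic identity $\overline{h}'(\rho)\rho(1-\rho)=\overline{g}(\rho)$ yields
\[
\int_\T\bigl[\partial_{uu}\lambda_N\,\overline{h}(\rho_N)+(\partial_u\lambda_N)^2\,\overline{g}(\rho_N)\bigr]\,\d u = 0,
\]
so only a Riemann quadrature error of size $O(\|\partial_{uuu}\lambda_N\|_{L^2(\T)})$ (plus similar lower-order Riemann corrections) remains. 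The final step is bookkeeping of all errors: the cubic Taylor remainder $O(N^{-3}\|\partial_u\lambda_N\|_\infty^3)$ summed over $N$ bonds and multiplied by $N^2$, the second-order finite-difference remainders involving $\partial_{uuu}\lambda_N$, the Riemann quadrature errors above, and the cross terms generated in expanding $(\Delta_x\lambda_N)^2$; each is bounded in $L^1_t$ using Proposition~\ref{prop:lambdaN} and then divided by $N$. The main obstacle will be verifying that the worst combinations scale like $(\varepsilon_N)^{6-6m}/N$ — which is exactly what Assumption~\ref{ass:eps} forces to vanish — without losing factors of $\sqrt{N}$ through suboptimal use of Cauchy–Schwarz when passing from $L^\infty$ norms of low-order derivatives of $\lambda_N$ to $L^2$ norms of the third derivatives.
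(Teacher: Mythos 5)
Your proposal reproduces the paper's proof essentially verbatim in structure: (1) compute $\partial_t\log\psi_t^N$ and substitute the PME identity for $\partial_t\lambda_N$; (2) expand the generator term via $e^z-1$, use the gradient identity $r_{x,x+1}(\eta)(\eta(x)-\eta(x+1))=\tau_x h-\tau_{x+1}h$ and Abel summation for the linear piece and the identity $\tfrac12 r_{x,x+1}(\eta(x)-\eta(x+1))^2=\tau_x g$ for the quadratic piece; (3) observe that the deterministic remainder $\sum_x F_N(t,x/N)$ integrates to zero on $\T$ because $F_N$ is a total space-derivative, so only a Riemann quadrature error survives; (4) absorb all errors via Proposition~\ref{prop:lambdaN} and Cauchy--Schwarz. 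The only imprecision is cosmetic: the Riemann error is really controlled by $\|\partial_u F_N\|_{L^2([0,T]\times\T)}$ (which mixes $\partial_{uuu}\rho_N$, $\partial_{uu}\rho_N\,\partial_u\rho_N$, and $(\partial_u\rho_N)^3$ terms, not just $\partial_{uuu}\lambda_N$), and the binding error term scales like $(\varepsilon_N)^{3-3m}/\sqrt N$ rather than $(\varepsilon_N)^{6-6m}/N$ — but both characterizations lead to exactly Assumption~\ref{ass:eps}, so the conclusion is unaffected.
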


\begin{proof}[Proof of Lemma \ref{lem:entropy}] Fix $t\in[0,T]$. For the sake of brevity we denote $\lambda_x^N:=\lambda_N(t,\frac x N)$. 
\medskip

\paragraph{\sc Step 1 -- Part coming from the generator: } 
First we have
\begin{align}
\frac{N^2 \mc L_N\psi_t^N}{\psi_t^N} = & N^2 \sum_{x\in\T_N}r_{x,x+1}(\eta)\eta(x)\big(1-\eta(x+1)\big)
\Big(e^{\lambda^N_{x+1}-\lambda^N_x}-1\Big) \label{eq:lpsi1}\\
& + N^2 \sum_{x\in\T_N}r_{x,x+1}(\eta)\eta(x+1)\big(1-\eta(x)\big)
\Big(e^{\lambda^N_{x}-\lambda^N_{x+1}}-1\Big).\label{eq:lpsi2}\end{align}
In \eqref{eq:lpsi1} and \eqref{eq:lpsi2} we write the exponential as the infinite sum: $e^z - 1 = \sum_{k\geqslant 1} \frac{z^k}{k!}$. The first order term $(k=1)$ gives:
\begin{align*}  N^2 \sum_{x\in\T_N} &r_{x,x+1}(\eta) \big(\eta(x)-\eta(x+1)\big) \big(\lambda_{x+1}^N - \lambda_x^N\big)\\
&=\; N^2\sum_{x\in\T_N} \Bigg(\sum_{y=x-m+1}^x\prod_{\substack{z=y\\z\neq x+1}}^{y+m}\eta(z)- \sum_{y=x-m+1}^x\prod_{\substack{z=y\\z\neq x}}^{y+m}\eta(z)\Bigg) \big(\lambda_{x+1}^N  - \lambda_x^N \big)\\
&=\; N^2\sum_{x\in\T_N} \big(\tau_xh(\eta)-\tau_{x+1}h(\eta)\big)\big(\lambda_{x+1}^N  - \lambda_x^N \big) \vphantom{\Bigg(}\\
&=\;  N^2 \sum_{x\in\T_N} \tau_{x}h(\eta) \big(\lambda_{x+1}^N + \lambda_{x-1}^N - 2 \lambda_x^N \big).\vphantom{\Bigg(}
\end{align*} In order to replace the discrete Laplacian by its continuous version, let us estimate the following error
\begin{align*}
r_N(t)&:=\bigg| \int \sum_{x\in\T_N} \tau_{x}h(\eta)\; \Big( N^2 \big(\lambda_{x+1}^N + \lambda_{x-1}^N - 2 \lambda_x^N \big) - \partial_{uu}\lambda_N\big(t,\tfrac x N\big)\Big)\; \d\mu_{t}^N\bigg|.
\\
& \leqslant 2m  \sum_{x\in\T_N}\bigg|\Big( N^2 \big(\lambda_{x+1}^N + \lambda_{x-1}^N - 2 \lambda_x^N \big) - \partial_{uu}\lambda_N\big(t,\tfrac x N\big)\Big)\bigg|,\end{align*} where the last inequality comes from the fact $|h(\eta)|\leqslant \cm{2m}$. 
We use the Taylor formula for the smooth function $u \mapsto \lambda_N(t,u)$ in order to obtain
\begin{multline}
N^2\big(\lambda_{x+1}^N+\lambda_{x-1}^N - 2\lambda_x^N\big) - \partial_{uu}\lambda_N\big(t,\tfrac x N\big) \\ = \frac{N^2}2\int_{\frac x N}^{\frac{x+1}N} \partial_{uuu}\lambda_N(t,u)\big(\tfrac{x+1}{N}-u\big)^2\; \d u -  \frac{N^2}2\int_{\frac{x -1}N}^{\frac{x}N} \partial_{uuu}\lambda_N(t,u)\big(\tfrac{x-1}{N}-u\big)^2\; \d u .  \label{eq:taylor}
\end{multline}
We start with the first integral in \eqref{eq:taylor}. The second one is very similar and the same argument will work. We use several times the Cauchy-Schwarz inequality in order to write 
\begin{align} 
N^2\sum_{x\in\T_N}\bigg| \int_{\frac x N}^{\frac{x+1}N} &\partial_{uuu}\lambda_N(t,u)\big(\tfrac{x+1}{N}-u\big)^2\; \d u\bigg| \notag \\ 
& \leqslant  N^2   \sum_{x\in\T_N} \bigg\{\bigg(\int_{\frac{x}{N}}^{\frac{x+1}{N}} \big|\partial_{uuu}\lambda_N\big|^2(t,u)\; \d u\bigg)^{\frac12} \bigg(\int_{\frac x N}^{\frac{x+1}{N}} \big(\tfrac{x+1}{N}-u\big)^4\; \d u\bigg)^{\frac12}\bigg\} \notag \\
&\leqslant \frac{N^2}{\sqrt{5}N^{\frac52}} \sum_{x\in\T_N} \bigg(\int_{\frac{x}{N}}^{\frac{x+1}{N}} \big|\partial_{uuu}\lambda_N\big|^2(t,u)\; \d u\bigg)^{\frac12}
\notag \\ 
& \leqslant  \frac{N^2}{\sqrt{5}N^{\frac52}} \sqrt{N}\; \bigg\{ \sum_{x\in\T_N} 
\int_{\frac{x}{N}}^{\frac{x+1}{N}} \big|\partial_{uuu}\lambda_N\big|^2(t,u)\; \d u\bigg\}^{\frac12} \notag \\
& = \;\frac{1}{\sqrt{5}} \big\|\partial_{uuu}\lambda_N(t,\cdot)\big\|_2. \label{eq:int3}
\end{align}
Recall Proposition \ref{prop:lambdaN}: we have proved that
\[\iint_{[0,T]\times \T} \big|\partial_{uuu}\lambda_N\big|^2(t,u) \; \d t\d u \leqslant \cm{C (\varepsilon_N)^{6-6m}},\] for some $C>0$. We let the reader repeat the argument for the second integral in \eqref{eq:taylor}, and deduce the following:
\begin{equation}
\int_0^T r_N(t)\; \d t \leqslant C'\sqrt{T} \cm{(\varepsilon_N)^{3-3m}}, \label{eq:error1}
\end{equation} for some $C'>0$. 
From Assumption \ref{ass:eps}, we get \cm{$N(\varepsilon_N)^{3m-3}=(N (\varepsilon_N)^{6m-6})^\frac12 \sqrt N \to\infty$}, and we then have \[\frac1N\int_0^T r_N(t) \d t \xrightarrow[N\to\infty]{} 0.\] Therefore, the first order term ($k=1$) gives the first contribution in \eqref{eq:taylor1}, namely 
\[\int \sum_{x\in\T_N} \partial_{uu}\lambda_N\big(t,\tfrac x N\big)\tau_xh(\eta) \; \d \mu_t^N\] plus an error $r_N(t)$ that we include in $\delta(t,N)$.

In the same way, the second order term ($k=2$) gives
\begin{multline}
N^2 \sum_{x\in\T_N} \frac12 r_{x,x+1}(\eta) \big(\eta(x)-2\eta(x)\eta(x+1)+\eta(x+1)\big) \big(\lambda_{x+1}^N - \lambda_x^N\big)^2 \\
= N^2 \sum_{x\in\T_N} \tau_x g(\eta) \big(\lambda_{x+1}^N - \lambda_x^N\big)^2.
\end{multline}
We want here to estimate the error
\[
s_N(t):= \bigg| \int \sum_{x\in\T_N}\tau_xg(\eta) \Big(N^2\big(\lambda_{x+1}^N-\lambda_x^N\big)^2 - (\partial_u\lambda_N)^2\big(t,\tfrac x N\big)\Big) \d \mu_t^N\bigg|.
\] As before, the Taylor formula and the Cauchy-Schwarz inequality allows us to bound
\begin{align*}
s_N(t) & \leqslant 2mN \sum_{x\in\T_N}\bigg|  \partial_u\lambda_N \big(t,\tfrac x N\big) \int_{\frac x N}^{\frac{x+1} N} \partial_{uu}\lambda_N(t,u)\big(\tfrac{x+1}{N}-u\big)\; \d u  \bigg|  \\
& \qquad +m N^2  \sum_{x\in\T_N}\bigg| \int_{\frac x N}^{\frac{x+1} N} \partial_{uu}\lambda_N(t,u)\big(\tfrac{x+1}{N}-u\big)\; \d u\bigg|^2 \\
& \leqslant  \frac{2mN}{\sqrt{3}N^{\frac32}} \; \big\|\partial_u\lambda_N(t,\cdot)\big\|_\infty \sum_{x\in\T_N}\bigg|\int_{\frac x N}^{\frac{x+1}{N}} \big|\partial_{uu}\lambda_N(t,u)\big|^2 \; \d u \bigg|^{\frac12} \\
& \qquad + \frac{mN^2}{3N^3} \sum_{x\in\T_N} \int_{\frac x N}^{\frac{x+1}{N}} \big|\partial_{uu}\lambda_N(t,u)\big|^2 \; \d u  \\
& \leqslant \frac{2m}{\sqrt{3}} \big\| \partial_u\lambda_N(t,\cdot)\big\|_\infty \; \big\|\partial_{uu}\lambda_N(t,\cdot)\big\|_2 + \frac{m}{3N} \; \big\|\partial_{uu}\lambda_N(t,\cdot)\big\|_2^2 \vphantom{\bigg\{}\\
& \leqslant \cm{C''\bigg( (\varepsilon_N)^{3-3m} + \frac{(\varepsilon_N)^{4-4m}}{N}\bigg)}
\end{align*}
for some $C''>0$, where the last inequality follows from Proposition \ref{prop:lambdaN}. Therefore, we also get that $\frac1N \int_0^T s_N(t)\; \d t \to 0$, and the second order term gives the first contribution in \eqref{eq:taylor2}, namely 
\[ \int \sum_{x\in\T_N} (\partial_u \lambda_N)^2\big(t,\tfrac x N\big) \tau_x g(\eta)\; \d\mu_t^N,\] plus that error $s_N(t)$ that we include in $\delta(t,N)$.

 Finally, we show that none of the higher order terms ($k \geq 3$) contributes and they are all included in $\delta(t,N)$. Precisely, we estimate  
 \begin{equation} N^2\int_0^T \frac1N \sum_{x\in\T_N} \sum_{k \geqslant 3} \frac{|\lambda_{x+1}^N-\lambda_x^N|^k}{k!}\; \d t  \label{eq:higher}\end{equation} and show that this quantity vanishes as $N\to\infty$. Using Proposition \ref{prop:lambdaN}, we bound \eqref{eq:higher} from above by 
 \begin{align*}
 N^2\int_0^T \sum_{k\geqslant 3} \frac{\|\partial_u\lambda_N(t,\cdot)\|_\infty^k}{k!\; N^k} \; \d t & \leqslant TN^2 \sum_{k \geqslant 3} \frac{C^k \; \cm{(\varepsilon_N)^{k(1-m)}}}{k!\; N^k} \\
 & = TN^2\Big(e^{\cm{\frac C N (\varepsilon_N)^{1-m}}} - \frac{C^2\cm{(\varepsilon_N)^{2(1-m)}}}{2N^2}-\frac{C\cm{(\varepsilon_N)^{1-m}}}{N}-1\Big).
 \end{align*}
 with $C=\cm{2C_{\rm Lip}/m}$. For any $x \in [0,1]$ we have $e^x-\frac{x^2}{2}-x-1 \leqslant x^3$, therefore the last expression above is bounded by 
 \[ TC^3 \; \frac{N^2 \cm{(\varepsilon_N)^{3-3m}}}{N^3} = \frac{TC^3\cm{(\varepsilon_N)^{3-3m}}}{N}  \xrightarrow[N\to\infty]{}0, \] 
 from  Assumption \ref{ass:eps}.

\medskip

\paragraph{\sc Step 2 -- Part coming from  $\log(\psi_t^N)$: } The term with $\log(\psi_t^N)$ can be explicitly computed as
\begin{align*}\partial_t\log(\psi_t^N) &=  \sum_{x\in\T_N} \partial_t\lambda_N\big(t,\tfrac x N\big) \Big[\eta(x)-\int \eta(x)\psi_t^N(\eta)d\nu_\alpha(\eta)\Big] \\ 
& =  \sum_{x\in\T_N} \partial_t\lambda_N\big(t,\tfrac x N\big) \Big[\eta(x)-\rho_N\big(t,\tfrac x N\big)\Big].\end{align*}
Recall that by definition $\overline{g}(\rho)=m\rho^m(1-\rho)$ and $\overline{h}(\rho)=\rho^m$. A straightforward computation (see Lemma \ref{prop:derivatives}) gives
\begin{equation}\label{eq:ident}\partial_t\lambda_N = \partial_{uu}\lambda_N \; \overline{h}'(\rho_N) + (\partial_u\lambda_N)^2\; \overline{g}'(\rho_N).\end{equation}
Therefore, this term appears exactly on that form in  \eqref{eq:taylor1} and \eqref{eq:taylor2}.

\medskip

\paragraph{\sc Step 3 -- Additional term: } Note that in \eqref{eq:taylor1} and \eqref{eq:taylor2} there is an extra term, that does not appear from the previous computations. Therefore, we have to substract it, and use the triangular inequality to estimate it. We show that that term  is actually of order $o(N)$ when integrated  in time between $0$ and $T$, and  therefore goes in $\delta(t,N)$. Indeed, the extra term  reads
\[\sum_{x\in\T_N} F_N\big(t,\tfrac x N\big)\] where
\[ F_N(t,u):= \partial_{uu}\lambda_N(t,u)\:\overline h(\rho_N(t,u))+(\partial_u\lambda_N)^2(t,u)\;\overline g(\rho_N(t,u)).\]
We want to show that \begin{equation}
\label{eq:obj}
\frac1N \bigg|\int_0^T \sum_{x\in\T_N} F_N\big(t,\tfrac x N\big) \; \d t\bigg| \xrightarrow[N\to\infty]{} 0.
\end{equation}
First, note that, for any $t >0$,
\[\int_{\T} F_N(t,u)\; \d u 
 = \int_\T \partial_u \bigg(\frac{\cm{(\rho_N)^{m-1}} \; \partial_u\rho_N}{1-\rho_N}\bigg)(t,u) \; \d u = 0.
\]
Therefore, to prove \eqref{eq:obj} it is enough to prove that the following quantity vanishes:
\[
\int_0^T\Big|\frac1N\sum_{x\in\T_N} F_N\big(t,\tfrac x N\big) - \int_{\T} F_N(t,u)\; \d u \Big| \d t \leqslant 
\int_0^T \sum_{x\in \T_N} \int_{\frac{x}{N}}^{\frac{x+1}{N}} \Big|F_N\big(t,\tfrac{x}{N}\big)-F_N(t,u)\Big| \; \d u \d t.
\]
From the Cauchy-Schwarz inequality, we have for any $k \in\T_N$ and $u \in [\frac x N, \frac{x+1} N]$, 
\[ \int_0^T  \Big|F_N\big(t,\tfrac{x}{N}\big)-F_N(t,u)\Big| \; \d t \leqslant \int_0^T \int_{\frac x N}^u \big|\partial_u F_N(t,u)\big|\; \d u \d t\leqslant \sqrt T \big|u-\tfrac{x}{N}\big|^\frac12 \big\|\partial_u F_N\big\|_{L^2([0,T]\times\T)}. \]
\cm{One can check that
\begin{align*}\partial_u F_N = & \frac{(\rho_N)^{m-1}\; \partial_{uuu}\rho_N}{1-\rho_N} \\
&  +  3\;\frac{\partial_{uu}\rho_N\; \partial_u \rho_N}{(1-\rho_N)^2}\big((m-1)(\rho_N)^{m-2} + (2-m) (\rho_N)^{m-1} \big) 
\\ & +  \frac{(\partial_u\rho_N)^3}{(1-\rho_N)^3}\; \mathrm{P}(\rho_N), 
\end{align*}
where 
\[\mathrm{P}(\rho)=  \rho^{m-3}\big(\rho(2-m)+m-1\big)\big(\rho(4-m) + m-2\big) + (2-m)\rho^{m-2}(1-\rho).\]
}
Therefore, from Proposition \ref{lem:Lip} and Proposition \ref{prop:xxx}, one easily obtains  that \cm{there exists $C=C(T,m,C_{\rm Lip},C_h)$ such that}
\[ \big\| \partial_u F_N\big\|_{L^2([0,T]\times \T)} \leqslant \cm{C(\varepsilon_N)^{3-3m}}.\]
Finally we have
\[ \int_0^T\Big|\frac1N\sum_{x\in\T_N} F_N\big(t,\tfrac x N\big) - \int_{\T} F_N(t,u)\; \d u \Big| \d t \leqslant \cm{\frac{C(\varepsilon_N)^{3-3m}}{N^\frac12}},\] which vanishes as $N\to\infty$ from Assumption \ref{ass:eps}. 
\end{proof}

\subsection{Average over large boxes}\label{ssec:average}

To end the proof of Proposition \ref{thm:entropy}, we want to take advantage of the Taylor expansion that seems to arise in \eqref{eq:taylor1} and \eqref{eq:taylor2}. Note that the factor in front of $(\eta(x)-\rho_N(t,\frac x N))$ in that expression can be simplified as: 
\[  \partial_t\lambda_N(t,u)=\partial_{uu}\lambda_N(t,u)\overline{h}'(\rho_N(t,u))+(\partial_u\lambda_N)^2(t,u)\overline g'(\rho_N(t,u)). \] 
First of all, we are going to replace $\eta(x)$ by its empirical average over large boxes. More precisely, let us estimate the error (integrated in time) made by this replacement, which writes as follows
\[ 
\varepsilon_{N,\ell}(T):=\bigg|\int_0^T \int \sum_{x\in\T_N}\partial_t\lambda_N\big(t,\tfrac x N\big) \Big(\eta(x)-\eta^{(\ell)}(x)\Big) \; \d \mu_t^N \; \d t \bigg|,
\] 
 where for any $\ell \in \bb N$, we denote by $\eta^{(\ell)}(x)$ the space average of the configuration $\eta$ on the box of size $2\ell+1$ centered around $x$:
\[\eta^{(\ell)}(x) = \frac{1}{2\ell+1}\sum_{|y-x|\leqslant \ell} \eta(y).\] 
Performing an integration by parts, using the Taylor formula and the Cauchy-Schwarz inequality, one can easily show that for any $\ell \in \bb N$, there exists a constant $C(\ell)>0$ such that 
\[ \varepsilon_{N,\ell}(T) \leqslant  C(\ell) \bigg( \iint_{[0,T]\times\T} \big|\partial_u \partial_t\lambda_N(t,u)\big|^2\; \d t \d u\bigg)^{\frac12} \leqslant \cm{C(\ell)(\varepsilon_N)^{3-3m}},\]
 the last inequality following from Proposition \ref{prop:lambdaN}. 
Therefore, under Assumption \ref{ass:eps}, 
\[ \lim_{\ell \to \infty} \lim_{N\to\infty} \frac{\varepsilon_{N,\ell}(T) }{N}= 0.\]

\medskip

\noindent The next step consists in replacing in \eqref{eq:taylor1} the local function $\tau_xh(\eta)$ by the spatial average 
\[\frac{1}{2\ell+1}\sum_{|y-x|\leqslant \ell} \tau_yh(\eta)\] for $\ell$ sufficiently large and then 
by its mean value $\overline h(\eta^{(\ell)}(x))$.
In the same way, in \eqref{eq:taylor2} we will replace $\tau_xg(\eta)$ by $\overline g(\eta^{(\ell)}(x))$. This step is more involved, and is done thanks to the \emph{one-block estimate} proved in the following section. Once again, because of the degeneracy of the limit profile $\rho(t,\cdot)$ (which can vanish), new arguments are needed w.r.t.\@ \cite{GLT}.

\subsection{The one-block estimate} \label{ssec:one-block}

\begin{lemma}[One-block estimate] \label{lem:one-block}
Let $\varepsilon>0$. For every  local function $\psi:\{0,1\}^{\Z}\to \bb R$ there exists $\gamma_0>0$ and $L_0<\infty$ such that: for all $\ell\geq L_0$ there exists $N_0=N_0(\ell)$ such that for any $N\geq N_0$ we have
\begin{equation}\label{eq:one-block-lem}
\int_0^T\int\frac{1}{N}\sum_{x\in \T_N}\tau_xV_{\ell,\psi}(\eta)f_t^{N}(\eta)\nu_\alpha(\d\eta)\d t\leq \frac{1}{\gamma_0 N}\int_0^T\mc H_N(t)\d t+\varepsilon,
\end{equation}
where \[V_{\ell,\psi}(\eta):=\bigg|\frac{1}{2\ell +1} \sum_{|y|\leqslant \ell} \tau_y\psi(\eta) - \overline{\psi}\big(\eta^{(\ell)}(0)\big)\bigg|.\]
\end{lemma}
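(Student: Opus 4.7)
The plan is the classical reduction via the entropy inequality~\eqref{eq:entrop}, followed by a law-of-large-numbers argument under the product measure $\nu_{\rho_N(t,\cdot)}^N$. Fix $t\in[0,T]$ and apply~\eqref{eq:entrop} with $\mu=\mu_t^N$, $\nu=\nu_{\rho_N(t,\cdot)}^N$, $\gamma=\gamma_0 N$ and test function $f=\tfrac1N\sum_x \tau_xV_{\ell,\psi}$; integrating in time produces the entropy term on the right-hand side of~\eqref{eq:one-block-lem} and reduces the task to proving $\int_0^T I_N(t,\ell,\gamma_0)\,\d t\le\varepsilon$, where
\[
I_N(t,\ell,\gamma_0) := \frac{1}{\gamma_0 N}\log\!\int\exp\Big(\gamma_0\!\sum_{x\in\T_N}\tau_xV_{\ell,\psi}\Big)\d\nu_{\rho_N(t,\cdot)}^N.
\]

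To handle the exponential moment we exploit the locality of $V_{\ell,\psi}$: if $\psi$ has range $r$, then $\tau_xV_{\ell,\psi}$ depends only on $\eta(y)$ for $|y-x|\le\ell+r$. Partitioning $\T_N$ into the $q:=2(\ell+r)+1$ arithmetic progressions modulo $q$, within each progression the shifted $V_{\ell,\psi}$'s have pairwise disjoint dependence sets and are hence independent under $\nu_{\rho_N(t,\cdot)}^N$. Hölder's inequality across these $q$ progressions, the uniform bound $|V_{\ell,\psi}|\le C_\psi:=2\|\psi\|_\infty$, and the elementary estimates $e^z\le 1+z+z^2e^{|z|}$ together with $\log(1+y)\le y$ yield
\[
I_N(t,\ell,\gamma_0)\;\le\;\frac{1}{N}\sum_{x\in\T_N}\!\int\!\tau_xV_{\ell,\psi}\,\d\nu_{\rho_N(t,\cdot)}^N \;+\; q\gamma_0 C_\psi^2 e^{q\gamma_0 C_\psi}.
\]
One fixes $L_0=L_0(\varepsilon,\psi)$, then $\gamma_0=\gamma_0(L_0,\varepsilon)>0$ small enough that the second term is at most $\varepsilon/(2T)$ for the relevant range of $\ell$; this coordination of $\gamma_0$ with $L_0$ is exactly what the quantifier order of the lemma allows.

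For the first term, Propositions~\ref{prop:unif} and~\ref{prop:regul-ast} supply the uniform convergence $\rho_N\to\rho$ on $[0,T]\times\T$ and the continuity of $\rho$, from which the Riemann-sum identity~\eqref{eq:rieman-unif} gives, for each fixed $t$,
\[
\frac{1}{N}\sum_{x\in\T_N}\!\int\!\tau_xV_{\ell,\psi}\,\d\nu_{\rho_N(t,\cdot)}^N\;\xrightarrow[N\to\infty]{}\;\int_{\T}E_{\nu_{\rho(t,u)}}\!\big[V_{\ell,\psi}\big]\,\d u,
\]
and dominated convergence transfers this to the time integral. A standard Chebyshev estimate, using that $\overline\psi$ is a polynomial hence Lipschitz on $[0,1]$, gives $E_{\nu_\rho}[V_{\ell,\psi}]\le C/\sqrt{\ell}$ with $C$ depending only on $\|\psi\|_\infty$, the range of $\psi$, and $\|\overline\psi'\|_\infty$, \emph{uniformly in $\rho\in[0,1]$}. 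Taking $L_0$ large enough makes the limiting integral at most $\varepsilon/2$, which together with the second step closes the estimate.

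\textbf{Main obstacle.} Unlike in~\cite{GLT}, the limit profile $\rho(t,u)$ may attain the degenerate values $0$ or $1$ on sets of positive Lebesgue measure, so the uniformity in $\rho$ of the convergence $E_{\nu_\rho}[V_{\ell,\psi}]\to 0$ is essential and not automatic. The saving grace is that at $\rho\in\{0,1\}$ the measure $\nu_\rho$ is a Dirac mass on the frozen configuration $\eta\equiv 0$ (resp.\ $\eta\equiv 1$), on which $V_{\ell,\psi}$ vanishes identically: both $\tfrac1{2\ell+1}\sum_{|y|\le\ell}\tau_y\psi(\eta)$ and $\overline\psi(\eta^{(\ell)}(0))$ collapse to $\psi(\mathbf 0)$ (resp.\ $\psi(\mathbf 1)$). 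Consequently the uniform $O(1/\sqrt\ell)$ bound extends to the full closed interval $[0,1]$ and the degeneracy of $\rho$ is absorbed without further modification; this is the only genuinely new point with respect to the non-degenerate setting of~\cite{GLT}.
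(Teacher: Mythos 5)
Your crucial step---controlling the exponential moment $I_N(t,\ell,\gamma_0)$ for a \emph{fixed} $\gamma_0>0$ uniformly over $\ell\geq L_0$---does not close with the estimate you use. After the H\"older split into $q=2(\ell+r)+1$ progressions, applying $e^z\le 1+z+z^2e^{|z|}$ with $z=q\gamma_0\,\tau_xV_{\ell,\psi}$ carries the factor $e^{q\gamma_0 C_\psi}\sim e^{2\ell\gamma_0 C_\psi}$, so the additive error $q\gamma_0 C_\psi^2 e^{q\gamma_0 C_\psi}$ in your bound for $I_N$ diverges as $\ell\to\infty$ for any fixed $\gamma_0>0$. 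Your remedy of "coordinating $\gamma_0$ with $L_0$ for the relevant range of $\ell$" is not compatible with the quantifier order of the lemma: $\gamma_0$ and $L_0$ are chosen once and the bound must then hold for \emph{every} $\ell\ge L_0$, i.e.\ over the unbounded range $[L_0,\infty)$. This is not cosmetic, since $\gamma_0$ becomes part of the Gronwall constant $\kappa$ in Proposition~\ref{thm:entropy} and must therefore be $\ell$-independent. A bounded-differences (McDiarmid-type) estimate in place of the crude expansion would replace $e^{q\gamma_0 C_\psi}$ by $e^{(q\gamma_0)^2\sum_i c_i^2/8}$ with $\sum_i c_i^2=O(1/\ell)$, and this does give an $\ell$-uniform $O(\gamma_0)$ error; but that is a substantively different step than the one you wrote, and the argument as stated has a genuine gap.

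Independently of that gap, your proposal does not reflect what the paper actually does or where its difficulty lies. The paper never controls the exponential moment of the full sum: it first splits according to the presence of a mobile cluster, treating the irreducible part $\mathbf{1}_{\mathcal{Q}_{x,\ell}}$ by the GPV Dirichlet-form and ergodicity arguments inherited from \cite{GLT}, and applying the entropy inequality only to the $\mathbf{1}_{\mathcal{Q}^c_{x,\ell}}$-restricted part. It is precisely this restriction, combined with the good/almost-zero/bad decomposition of $\T_N$, Lemma~\ref{lem:incl}, and Proposition~\ref{lem:pos}, which produces the factor $\nu^N_{\rho_N(t,\cdot)}(\mathcal{Q}^c_{x,\ell})\le(1-\delta^2)^\ell$ on the good points; its exponential decay in $\ell$ is what absorbs $e^{\gamma_0(2\ell+1)C_\psi}$ and permits a fixed $\gamma_0$. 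Your claim that "the only genuinely new point" is that $\nu_\rho$ is a Dirac mass at $\rho\in\{0,1\}$ therefore misses the target: the uniform bound $E_{\nu_\rho}[V_{\ell,\psi}]\le C/\sqrt{\ell}$ on $[0,1]$ is an elementary variance estimate and requires no special saving grace, but it is not what the paper uses. The genuinely new obstacle, which your proof never engages with, is that $\nu^N_{\rho_N(t,\cdot)}(\mathcal{Q}^c_{x,\ell})$ does \emph{not} decay in $\ell$ near the zeros and interfaces of $\rho$, and the entire architecture of Section~\ref{ssec:one-block} exists to quarantine those regions.
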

We will apply Lemma \ref{lem:one-block} with $\psi(\eta)=h(\eta)$ and $g(\eta)$.

\begin{proof}[Proof of Lemma \ref{lem:one-block}] For $x\in\T_N$, $\ell\in\mathbb{N}$, let \[\mathcal{Q}_{x,\ell}=\Big\{ \eta\ \colon\ \sum_{y=x-\ell}^{x+\ell - 1}\eta(x)\eta(x+1)\geq 1\Big\} \] the set of configurations in which there are two neighbouring particles within distance $\ell$ of $x$ (in particular the box of radius $\ell$ around $x$ contains a \emph{mobile cluster}). We split the left hand side in \eqref{eq:one-block-lem} as follows:
\begin{eqnarray}
\int_0^T\int\frac{1}{N}\sum_{x\in \T_N}\tau_xV_{\ell,\psi}(\eta)\mathbf{1}_{\mathcal{Q}_{x,\ell}}(\eta) f_t^{N}(\eta)\nu_\alpha(\d\eta)\d t\qquad\label{eq:oneblockirreducible}\\
\qquad+\ \int_0^T\int\frac{1}{N}\sum_{x\in \T_N}\tau_xV_{\ell,\psi}(\eta)\mathbf{1}_{\mathcal{Q}^c_{x,\ell}}(\eta) f_t^{N}(\eta)\nu_\alpha(\d\eta)\d t.\label{eq:oneblockreducible}
\end{eqnarray}
As indicated in \cite[Section 3.1]{GLT}, the restriction to the irreducible set $\mc Q_{x,\ell}$ in \eqref{eq:oneblockirreducible} allows us to repeat standard arguments, and to conclude that
\begin{equation}
\limsup_{\ell\rightarrow\infty}\limsup_{N\rightarrow\infty}\int_0^T\int\frac{1}{N}\sum_{x\in \T_N}\tau_xV_{\ell,\psi}(\eta)\mathbf{1}_{\mathcal{Q}_{x,\ell}}(\eta) f_t^{N}(\eta)\nu_\alpha(\d\eta)\d t=0.
\end{equation}
Let us now deal with the other term \eqref{eq:oneblockreducible}. By the entropy inequality \eqref{eq:entrop}, the term inside the time integral $\int_0^T$ can be bounded above by
\begin{equation}
\frac{H\big(\mu_t^N|\nu^N_{\rho_N(t,\cdot)}\big)}{\gamma N}+\frac{1}{\gamma N}\log\int\exp\bigg(\gamma\sum_{x\in\T_N}\tau_x V_{\ell,\psi}(\eta)\mathbf{1}_{\mathcal{Q}_{x,\ell}^c}(\eta)\bigg)\nu^N_{\rho_N(t,\cdot)}(\d\eta)
\end{equation}
for any $\gamma>0$. Recall that $\varepsilon >0$ is fixed. We need to show that we can choose $\gamma>0$ such that 
\begin{equation}\label{eq:oneblockgoal}
\limsup_{\ell\rightarrow\infty}\limsup_{N\rightarrow\infty}\int_0^T\frac{1}{\gamma N}\log\int\exp\bigg(\gamma\sum_{x\in\T_N}\tau_x V_{\ell,\psi}(\eta)\mathbf{1}_{\mathcal{Q}_{x,\ell}^c}(\eta)\bigg)\nu^N_{\rho_N(t,\cdot)}(\d\eta) \d t\leq\varepsilon.
\end{equation} 
Now, contrary to \cite{GLT}, we made no assumption to ensure that $\nu^N_{\rho(t,\cdot)}(\mathcal{Q}_{x,\ell}^c)$ decays exponentially in $\ell$ for all $x$. In fact, this is plain wrong when $\rho(t,\cdot)$ vanishes on an interval. 

\medskip

Let $\ell_0$ be such that the support of $\psi$ is contained in $\{-\ell_0,\dots,\ell_0\}$ and $C:=2\|\psi \|_\infty$ (which clearly does not depend on $\ell$). 
From the uniform convergence stated and proved in Proposition \ref{prop:unif}, we know that there exists a vanishing sequence of positive numbers $(\delta_N^{(1)})$ such that: for any $u\in\T$, any $t\in[0,T]$, and $N\in\mathbb{N}$,
\begin{equation}
\label{eq:eps1} \big|\rho_N(t,u)-\rho(t,u)\big| \leqslant \delta_N^{(1)}.
\end{equation} 
Since $\rho_N\ge \eps_N$ (see Proposition~\ref{prop:max}) while $\rho$ can be equal to $0$, it is natural to impose that  $\delta_N^{(1)} \geq \eps_N$. Without loss of generality, 
we can assume that the sequence $\big(\delta_N^{(1)}\big)$ is decreasing. Moreover, the sequence  $(\rho_N)$ is equicontinuous on $[0,T]\times \T$, and therefore, there exists a nondecreasing continuous modulus of continuity $w:[0,1] \to \R_+$ with $w(0)=0$ such that for any $u,v\in\T$, $t\in[0,T]$, $\varepsilon >0$ and $N\in\mathbb{N}$, 
\begin{equation}
\label{eq:eps2}
|u-v| \leqslant \varepsilon \quad \Rightarrow \quad
\big|\rho_N(t,u)-\rho_N(t,v)\big| \leqslant w(\varepsilon).
\end{equation}
Let us denote 
\begin{equation}\label{eq:eta}\delta_N:=\delta_N^{(1)} + w\big(\tfrac{\ell+\ell_0+1}{N}\big) \xrightarrow[N\to\infty]{}0, \end{equation}
then it follows from the monotonicity of $\big(\delta_N^{(1)}\big)_N$ and of $w$ that $\left(\delta_N\right)_N$ is decreasing.

\medskip

We are going to split $\T_N$ into three sets of points: the \emph{good} ones, the \emph{almost zeroes} and the \emph{bad} ones. Namely, for any $\delta >0$, and any vanishing sequence $(\alpha_N)$ such that $\alpha_N \leqslant \delta$, let
\begin{eqnarray}
G_t^{N,\ell}(\delta)&:=&\Big\{ x\in\T_N\ \colon\ \rho_N(t,\cdot)\geq\delta \quad \text{ on }\big[\tfrac{x-\ell-\ell_0}{N},\tfrac{x+\ell+\ell_0}{N}\big]\Big\},\\
Z_t^{N,\ell}(\alpha_N)&:=&\Big\{ x\in\T_N\ \colon\ \rho_N(t,\cdot)\leqslant \alpha_N \quad \text{ on }\big[\tfrac{x-\ell-\ell_0}{N},\tfrac{x+\ell+\ell_0}{N}\big]\Big\},\\
B_t^{N,\ell}(\delta,\alpha_N)&:=&\T_N\setminus\big(G_t^{N,\ell}(\delta)\cup Z_t^{N,\ell}(\alpha_N)\big).
\end{eqnarray}
The parameters $\delta >0$ and $\alpha_N \to 0$ will be chosen ahead. 
We want to study the limit as $N\to\infty$ of the cardinality of these sets of points (renormalized by $N$).
For that purpose, let us introduce the following sets: for  any $\delta >0$, $t\in[0,T]$, let 
\begin{align*}
  {\mc G}_t(\delta)&:=\Big\{ u\in\T \; : \;  \rho(t,u) \geq \delta\Big\},\\
 {\mc Z}_t(\delta)&:=\overbrace{\Big\{ u\in\T \; : \;  \rho(t,u)=0\Big\}}^{\circ} \cup \Gamma_t(\delta), \\
 {\mc B}_t(\delta)&:=\Big\{ u\in\T \; : \; 0 < \rho(t,u) < \delta\Big\} \subset \Gamma_t(\delta),
\end{align*}
where $\Gamma_t(\delta)$ has been defined in \eqref{eq:gammadelta}. Note first that   \begin{align*}
\T \setminus \big(\mathcal{G}_t(\delta)\cup \mathcal{Z}_t\big) = \mathcal{B}_t(\delta) \cup \Gamma_t,
\end{align*}
where $\mathcal{Z}_t$ and $\Gamma_t$ have been defined respectively in \eqref{eq:Z} and \eqref{eq:gamma}. Therefore, since $\mathrm{Leb}(\Gamma_t)=0$ (recall the proof of Proposition \ref{lem:pos}) the two remaining sets above have the same Lebesgue measure: 
\begin{equation}\label{eq:leb} 
\mathrm{Leb} \big(\mathcal{B}_t(\delta)\big) = \mathrm{Leb}\Big(\T \setminus \big(\mathcal{G}_t(\delta)\cup \mathcal{Z}_t\big)\Big).
\end{equation}
We are going to make use of the following lemma:
\begin{lemma}\label{lem:incl}
Recall that $\delta_N$ has been defined in \eqref{eq:eta}. 
For any $\ell,\ell_0 \in \mathbb{N}$ fixed, and $\delta >0$, the following convergences hold:
\begin{align}
&\lim_{N\to\infty} \frac{1}{N}\Big|G_t^{N,\ell}(\delta-\delta_N)\Big| = \mathrm{Leb}\big( \mathcal{G}_t(\delta)\big)\label{eq:convG} \\
&\lim_{N\to\infty} \frac{1}{N}\Big|Z_t^{N,\ell}(\delta_N)\Big| = \mathrm{Leb}\big( \mathcal{Z}_t\big)\label{eq:convZ}
\end{align}
and therefore from \eqref{eq:leb}: \[ \lim_{N\to\infty} \frac{1}{N}\Big|B_t^{N,\ell}(\delta-\delta_N,\delta_N)\Big| = \mathrm{Leb}\big( \mathcal{B}_t(\delta)\big).\]
\end{lemma}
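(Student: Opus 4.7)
The plan is to prove both convergences in Lemma~\ref{lem:incl} by a sandwich argument, bounding each discrete cardinality between the numbers of grid points lying in two closely related subsets of $\T$ built from $\rho(t,\cdot)$ alone, and then letting $N\to\infty$ via Riemann-sum approximation combined with continuity of $\rho(t,\cdot)$ and negligibility of the relevant level sets.

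\textbf{Upper bounds.} If $x\in G_t^{N,\ell}(\delta-\delta_N)$, then in particular $\rho_N(t,x/N)\geqslant \delta-\delta_N$; combining with the uniform estimate $|\rho_N-\rho|\leqslant \delta_N^{(1)}\leqslant \delta_N$ from~\eqref{eq:eps1} yields $\rho(t,x/N)\geqslant \delta-2\delta_N$. Likewise, $x\in Z_t^{N,\ell}(\delta_N)$ forces $\rho(t,x/N)\leqslant 2\delta_N$. Hence
\[
\frac{1}{N}\bigl|G_t^{N,\ell}(\delta-\delta_N)\bigr|\leqslant \frac{1}{N}\#\bigl\{x\in\T_N : \rho(t,x/N)\geqslant \delta-2\delta_N\bigr\},
\]
together with the analogous inequality for $|Z_t^{N,\ell}(\delta_N)|$ and the sub-level set $\{\rho(t,\cdot)\leqslant 2\delta_N\}$.

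\textbf{Lower bounds.} Conversely, suppose $u\in\T$ satisfies $\rho(t,v)\geqslant \delta$ for every $v$ in the enlarged interval $[u-(\ell+\ell_0+1)/N,u+(\ell+\ell_0+1)/N]$. Then for $x:=\lfloor uN\rfloor$ the window $[x/N-(\ell+\ell_0)/N,x/N+(\ell+\ell_0)/N]$ is contained in the enlarged interval; by~\eqref{eq:eps1} this gives $\rho_N(t,\cdot)\geqslant \delta-\delta_N^{(1)}\geqslant \delta-\delta_N$ on the window, i.e.\ $x\in G_t^{N,\ell}(\delta-\delta_N)$. Replacing $\mathcal G_t(\delta)$ by $\mathcal Z_t$ and the threshold accordingly handles $Z_t^{N,\ell}(\delta_N)$ in the same way.

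\textbf{Passage to the limit and main obstacle.} It remains to let $N\to\infty$ in each sandwich. The super- and sub-level sets $\{\rho(t,\cdot)\geqslant \delta-2\delta_N\}$ and $\{\rho(t,\cdot)\leqslant 2\delta_N\}$ are decreasing in $N$ (because $\delta_N$ is), and their intersections over $N$ are $\mathcal G_t(\delta)$ and $\{\rho(t,\cdot)=0\}=\overline{\mathcal Z_t}$ respectively; the erosion sets appearing in the lower bounds are instead increasing in $N$, with limits $\mathrm{int}(\mathcal G_t(\delta))$ and $\mathcal Z_t$. Converting cardinalities of grid points in each fixed member of these monotone sequences into its Lebesgue measure is standard Riemann-sum approximation, valid for indicators of closed sets with Lebesgue-null boundaries; monotone convergence of Lebesgue measure then collapses the brackets onto the common values $\mathrm{Leb}(\mathcal G_t(\delta))$ and $\mathrm{Leb}(\mathcal Z_t)$. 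The main obstacle is precisely to verify that these boundaries are Lebesgue-null: for the zero case it is exactly $\mathrm{Leb}(\Gamma_t)=0$, already recorded in the proof of Proposition~\ref{lem:pos}; for the positive threshold $\delta>0$ one needs $\mathrm{Leb}(\{\rho(t,\cdot)=\delta\})=0$, and this is secured by the smoothness of $\rho(t,\cdot)$ inside the finitely many components (Lemma~\ref{lem:connect}) of the positivity set $\mathcal P_t$ recalled in the remark following Proposition~\ref{prop:regul-ast}: the solution cannot be locally constant there (strict parabolicity of the PME on $\mathcal P_t$), so each positive level set of $\rho(t,\cdot)$ is discrete and hence Lebesgue-null.
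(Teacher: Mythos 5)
Your upper-bound direction matches the paper's. The problem is in your lower bound, where you depart from the paper: you discard the equicontinuity estimate~\eqref{eq:eps2} and instead erode $\mathcal{G}_t(\delta)$ by $(\ell+\ell_0+1)/N$, so that your lower bound only captures $\mathrm{Leb}\bigl(\mathrm{int}\,\mathcal{G}_t(\delta)\bigr)$ in the limit. Closing the sandwich then forces you to show $\mathrm{Leb}\bigl(\{\rho(t,\cdot)=\delta\}\bigr)=0$, and the justification you offer --- that $\rho(t,\cdot)$ is smooth on $\mathcal P_t$ and ``cannot be locally constant'', hence its positive level sets are discrete --- is not valid as stated. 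A $\mathcal C^\infty$ function that is nowhere locally constant can still have a level set of positive Lebesgue measure (a fat-Cantor level set); discreteness would follow from real-analyticity of $\rho(t,\cdot)$ on $\mathcal P_t$, which is true for the PME but is nowhere established or cited in the paper, and appealing to ``strict parabolicity'' alone does not deliver it.

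The paper avoids this entirely, and this is precisely why $\delta_N$ in~\eqref{eq:eta} contains the term $w\bigl(\tfrac{\ell+\ell_0+1}{N}\bigr)$. Taking $u\in\mathcal{G}_t(\delta)$ itself (not in an eroded set), one has for any $y$ in the window around $\lfloor uN\rfloor/N$ that $\rho_N(t,y)\geq\rho_N(t,u)-w\bigl(\tfrac{\ell+\ell_0+1}{N}\bigr)\geq\rho(t,u)-\delta_N^{(1)}-w\bigl(\tfrac{\ell+\ell_0+1}{N}\bigr)\geq\delta-\delta_N$, using both~\eqref{eq:eps1} and~\eqref{eq:eps2}. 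Hence $\lfloor uN\rfloor\in G_t^{N,\ell}(\delta-\delta_N)$ for \emph{every} $u\in\mathcal{G}_t(\delta)$, which directly gives $\liminf_N\tfrac1N|G_t^{N,\ell}(\delta-\delta_N)|\geq\mathrm{Leb}\bigl(\mathcal{G}_t(\delta)\bigr)$ (each of the $N$ dyadic cells meeting $\mathcal{G}_t(\delta)$ contributes one grid point, so the count is at least $N\,\mathrm{Leb}(\mathcal{G}_t(\delta))$), with no need to control $\partial\mathcal{G}_t(\delta)$. You should restore the use of~\eqref{eq:eps2} in the lower bound; with that change your sandwich closes directly and the extraneous analyticity claim can be dropped.
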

We will prove  Lemma \ref{lem:incl} further. Let us first end the proof of Lemma \ref{lem:one-block}, more precisely of \eqref{eq:one-block-lem}. Fix $\delta >0$ as a parameter that will vanish at the end of this paragraph, \emph{after} letting $N\to\infty$ and $\ell \to \infty$. Take the expression under the limit in the left hand side of \eqref{eq:oneblockgoal}, and take $N$ sufficiently large such that $\delta-\delta_N > \delta_N$.  We divide the sum that appears there into three sums: \begin{itemize}
\item one over $B_t^{N,\ell}(\delta-\delta_N,\delta_N)$,
\item one over $Z_t^{N,\ell}(\delta_N)$,
\item and the last one over $G_t^{N,\ell}(\delta-\delta_N)$,
\end{itemize} since by definition their union gives $\T_N$.  We bound each sum as follows: first, since $V_{\ell, \psi}(\eta)$ is bounded by $C$, we have
\[\sum_{x\in B_t^{N,\ell}(\delta-\delta_N,\delta_N)} \tau_x V_{\ell,\psi}(\eta) \mathbf{1}_{\mc Q_{x,\ell}^c}(\eta) \leqslant C \Big|B_t^{N,\ell}(\delta-\delta_N,\delta_N)\Big|.\]
Then note that the two sums over $Z_t^{N,\ell}(\delta_N)$ and $G_t^{N,\ell}(\delta-\delta_N)$
are functions with disjoint supports; since the measure is product, the average
factorizes.
To bound the term with the sum over $Z_t^{N,\ell}(\delta_N)$, note that, if $\eta(x+y)=0$ for all $|y|\leq \ell +\ell_0$, then $\tau_x V_{\ell,\psi}(\eta)=0$. Moreover, if a non-decreasing function\footnote{A function $f:\{0,1\}^{\T_N}\rightarrow\R$ is said to be non-decreasing if $f(\eta)\leq f(\eta')$ as soon as $\eta\preccurlyeq\eta'$, where $\preccurlyeq$ denotes the coordinate-wise order in $\{0,1\}^{\T_N}$
} has support in
 $\big\{x\in\T_N\; \colon\; \rho_N(t,\frac x N)\leqslant \delta_N\big\}$, we can replace $\nu^N_{\rho_N(t,\cdot)}$ by the homogeneous measure $\nu_{\delta_N}$ when overestimating its expectation (as we do in the second inequality below). Consequently, we can bound
\begin{align*}
\int\exp\bigg(\gamma\sum_{x\in Z_t^{N,\ell}(\delta_N)}&\tau_x V_{\ell,\psi}(\eta)\mathbf{1}_{\mathcal{Q}_{x,\ell}^c}(\eta)\bigg)\nu^N_{\rho_N(t,\cdot)}(\d\eta)\\
&\leqslant \int\exp\bigg(\gamma C\sum_{x\in Z_t^{N,\ell}(\delta_N)}\mathbf{1}_{\big\{\exists\; |y|\leq \ell+\ell_0\colon\ \eta(x+y)=1\big\}}\bigg)\nu^N_{\rho_N(t,\cdot)}(\d\eta)\\
&\leqslant \int\exp\bigg(\gamma C\sum_{x\in Z_t^{N,\ell}(\delta_N)}\mathbf{1}_{\big\{\exists\; |y|\leq \ell+\ell_0\colon\ \eta(x+y)=1\big\}}\bigg)\nu_{\delta_N}(\d\eta) \\
&\leqslant \int\exp\bigg(\gamma C\sum_{y\in \T_N}\eta(y)(2\ell+2\ell_0+1)\bigg)\nu_{\delta_N}(\d\eta)\\
&\leqslant \Big(\delta_N\big(e^{\gamma C(2\ell+2\ell_0+1)}-1\big)+1\Big)^N.
\end{align*} 
Finally, for any $x \in G_t^{N,\ell}(\delta-\delta_N)$, and any $t\in[0,T]$, we know that 
\[ \nu_{\rho_N(t,\cdot)}^N \big( \mc Q_{x,\ell}^c\big) \leqslant \big(1-(\delta-\delta_N)^2\big)^\ell.  \]
Therefore, we bound the  term under the limit in \eqref{eq:oneblockgoal} as follows:
\begin{align}
\int_0^T&\frac{1}{\gamma N} \log \int\exp\bigg(\gamma\sum_{x\in\T_N}\tau_x V_{\ell,\psi}(\eta)\mathbf{1}_{\mathcal{Q}_{x,\ell}^c}(\eta)\bigg)\nu^N_{\rho_N(t,\cdot)}(\d\eta)\d t\notag \\
& \leqslant  \int_0^T\frac{C \big|B_t^{N,\ell}(\delta-\delta_N,\delta_N)\big|}{N} \d t\notag\\
& \quad + \frac{T}{\gamma}\log\big(1+\big(e^{\gamma C(2\ell+2\ell_0+1)}-1\big)\delta_N\big) \notag\\
& \quad  + \int_0^T\frac{1}{\gamma N (2\ell+1)} \sum_{x\in G_t^{N,\ell}(\delta-\delta_N)} \log \Big(\nu_{\rho_N(t,\cdot)}^N \big( \mc Q_{x,\ell}^c\big)\big(\exp(\gamma(2\ell+1)C)-1\big)+1\Big)\d t\notag \\
& \leqslant \int_0^T \frac{C \big|B_t^{N,\ell}(\delta-\delta_N,\delta_N)\big|}{ N} \d t\label{eq:term1}\\
& \quad + \frac{T}{\gamma}\log\big(1+\big(e^{\gamma C(2\ell+2\ell_0+1)}-1\big)\delta_N\big) \label{eq:term2}\\
&  \quad + \frac{T}{\gamma (2\ell+1)} \Big( \exp(\gamma(2\ell+1)C)-1 \Big) \big(1-(\delta-\delta_N)^2\big)^\ell. \label{eq:term3}
\end{align}
We first take $N\to\infty$, then $\ell \to \infty$ and then $\delta \to 0$. We treat each term separately: from Lemma \ref{lem:incl}, Fatou's lemma, and the fact that $\mathcal{B}_t(\delta)\subset \Gamma_t(\delta)$ we obtain: 
\begin{align*} \lim_{\delta\to 0}\limsup_{\ell\to\infty}\limsup_{N\to\infty} \int_0^T \frac{1}{N}\Big|B_t^{N,\ell}(\delta-\delta_N,\delta_N)\Big| \; \d t& \leq \lim_{\delta\to 0}\int_0^T \mathrm{Leb}\big(\mathcal{B}_t(\delta)\big)\; \d t \\
& \leqslant \lim_{\delta\to 0}\int_0^T \mathrm{Leb}\big(\Gamma_t(\delta)\big)\; \d t=0,\end{align*}
where the last equality follows from Proposition \ref{lem:pos}. 
The second term \eqref{eq:term2} easily vanishes since $\delta_N\to 0$. Finally, for the last term \eqref{eq:term3}, we choose $\gamma >0$ such that $2\gamma C + \log(1-\delta^2) < 0$  and the result follows. 
\end{proof}

We now go back to the proof of Lemma \ref{lem:incl}. 

\begin{proof}[Proof of Lemma \ref{lem:incl}] The proof is based on the following fact:  for any $\delta >0$ and $N$ sufficiently large,
\begin{align}
\label{eq:incl1}
&\mathcal{G}_t(\delta) \subset \frac{1}{N}G_t^{N,\ell}(\delta-\delta_N) \subset \mathcal{G}_t(\delta-2\delta_N),\\
& {\mathcal{Z}_t} \subset \frac{1}{N}Z_t^{N,\ell}(\delta_N) \subset
 \mathcal{Z}_t(2\delta_N). \label{eq:incl2}
\end{align}
Let us prove the first inclusion in \eqref{eq:incl1}, namely: if $u \in \mathcal{G}_t(\delta)$ then, for any $N$ sufficiently large, $\lfloor uN\rfloor \in G_t^{N,\ell}(\delta-\delta_N)$. 
\medskip

Let $u \in \mathcal{G}_t(\delta)$ and $y \in \big[ \frac{\lfloor uN\rfloor - \ell-\ell_0}{N} ,\frac{\lfloor uN\rfloor + \ell+\ell_0}{N}\big]$, which implies  $|y-u| \leqslant \frac{\ell+\ell_0+1}{N}$. 
Using \eqref{eq:eps1} and \eqref{eq:eps2} we get
\[ \rho_N(t,y) \geqslant \rho_N(t,u)-w\big(\tfrac{\ell+\ell_0+1}{N}\big) \geqslant  \rho(t,u)-\delta_N^{(1)}-w\big(\tfrac{\ell+\ell_0 +1}{N}\big) 
\geqslant \delta -\delta_N,\] 
which proves the claim. The same argument works to prove the symmetric inclusion, namely: if $x \in G_t^{N,\ell}(\delta-\delta_N)$ then $\frac x N \in\mathcal{G}_t(\delta-2\delta_N)$. As a result, \eqref{eq:incl1} follows.
The proof of the second series of inclusions \eqref{eq:incl2} is very similar and we let the reader to check it. 

In order to prove \eqref{eq:convG}, it is enough to use \eqref{eq:incl1} and to show that the Lebesgue measures converge, namely:
\[\mathrm{Leb}\big(\mathcal{G}_t(\delta-2\delta_N)\big) \xrightarrow[N\to\infty]{} \mathrm{Leb}\big(\mathcal{G}_t(\delta)\big).\]
This is indeed the case since $(\delta_N)$ is a decreasing sequence and therefore the family of sets $\big(\mathcal{G}_t(\delta-2\delta_N)\big)$ is decreasing for inclusion. 

Therefore, thanks to the continuity of the Lebesgue measure, there holds
\[
\mathrm{Leb}(\mathcal{G}_t(\delta)) = \mathrm{Leb}\big( \bigcap_{N\ge 1}\mathcal{G}_t(\delta-2\delta_N)\big) = \lim_{N \to \infty}\mathrm{Leb}\big(\mathcal{G}_t(\delta-2\delta_N)\big).
\]
A very similar argument can be worked out to prove \eqref{eq:convZ}. For that case, first note that 
\[\mathcal{Z}_t \cup \Gamma_t = \bigcap_{N\ge 1} \mathcal{Z}_t(2\delta_N),\] and then one is able to conclude the proof, since the boundary set $\Gamma_t$ satisfies $\mathrm{Leb}(\Gamma_t)=0$ (from Proposition \ref{lem:pos}).
\end{proof}

\subsection{Conclusion}

Putting together the computation of the entropy production in Lemma \ref{lem:entropy}, and then the replacements done in Section \ref{ssec:average} and Lemma \ref{lem:one-block}, up to now we have proved the following:

\begin{corollary} \label{cor:replacement}
There exists $\gamma_0>0$ and $\ell_0 \in \bb N$, such that, for any $\ell\geqslant \ell_0$, there exists $N_0 = N_0(\ell_0)$ such that, for any $N\geqslant N_0$, 
\begin{align}
\mc H_N(T) - \mc H_N(0) \leqslant &\frac{1}{\gamma_0}\int_0^T \mc H_N(t)\; \d t + \varepsilon_T(N,\ell) \label{eq:sum0}\\
& + \int_0^T\int \sum_{x \in \T_N} \partial_{uu} \lambda_N\big(t,\tfrac{x}{N}\big) \; \overline H\Big(\eta^{(\ell)}(x),\rho\big(t,\tfrac{x}{N}\big) \Big) \d\mu_{t}^N\; \d t \label{eq:sum1}\\
& + \int_0^T\int \sum_{x \in \T_N} (\partial_{u} \lambda_N)^2\big(t,\tfrac{x}{N}\big) \; \overline G\Big(\eta^{(\ell)}(x),\rho\big(t,\tfrac{x}{N}\big) \Big) \d\mu_{t}^N\; \d t,\label{eq:sum2}
\end{align}
where \begin{align*}
\overline H(a,b)&:=\overline h(a)-\overline h(b) - \overline h'(b)(a-b) \\
\overline G(a,b)&:=\overline g(a)-\overline g(b) - \overline g'(b)(a-b) 
\end{align*}
and 
\[ \limsup_{\ell \to \infty}\limsup_{N\to\infty} \frac{\varepsilon_T(N,\ell)}{N} = 0.\]
\end{corollary}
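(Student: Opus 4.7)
The statement is essentially an \emph{assembly} of the three ingredients developed separately: Yau's entropy production identity (Lemma~\ref{lem:entropy}), the replacement of $\eta(x)$ by $\eta^{(\ell)}(x)$ (Section~\ref{ssec:average}), and the one-block estimate (Lemma~\ref{lem:one-block}). The plan is to combine them in the correct order and rearrange algebraically to reconstruct $\overline H$ and $\overline G$.

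First I would integrate the Yau inequality $\partial_t \mc H_N(t) \le \int\{N^2\mc L_N\psi_t^N/\psi_t^N - \partial_t \log\psi_t^N\}\d\mu_t^N$ over $[0,T]$ and invoke Lemma~\ref{lem:entropy}, so that $\mc H_N(T)-\mc H_N(0)$ is bounded by the sum \eqref{eq:taylor1}+\eqref{eq:taylor2} plus $\int_0^T \delta(t,N)\d t = o(N)$. At this point the two sums still feature the local observables $\tau_x h(\eta)$, $\tau_x g(\eta)$ and the \emph{pointwise} occupation $\eta(x)$ in the linear parts; the arguments of $\overline h, \overline h', \overline g, \overline g'$ are evaluated at $\rho_N(t,x/N)$.

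Second, I would handle the two linear terms. Thanks to the identity $\partial_t \lambda_N = \partial_{uu}\lambda_N\, \overline h'(\rho_N) + (\partial_u\lambda_N)^2\,\overline g'(\rho_N)$ established in \eqref{eq:ident}, the two linear contributions coming from \eqref{eq:taylor1}--\eqref{eq:taylor2} combine into $\sum_x \partial_t\lambda_N(t,\tfrac{x}{N})(\eta(x)-\rho_N(t,\tfrac{x}{N}))$, and replacing $\eta(x)$ by $\eta^{(\ell)}(x)$ costs exactly $\varepsilon_{N,\ell}(T)$ with $\varepsilon_{N,\ell}(T)/N \to 0$ as $N,\ell\to\infty$, as proved in Section~\ref{ssec:average} via summation by parts and the $L^2$-estimate \eqref{eq:lambdabound4} on $\partial_u\partial_t\lambda_N$.

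Third, I would treat the nonlinear pieces $\tau_x h(\eta)$ and $\tau_x g(\eta)$. This is a two-step substitution for each observable $\psi\in\{h,g\}$. Step~(a): replace $\tau_x\psi(\eta)$ by its mesoscopic spatial average $\frac{1}{2\ell+1}\sum_{|y|\le\ell}\tau_{x+y}\psi(\eta)$; this is a standard discrete summation by parts which transfers the $\ell$-shift onto the smooth coefficient $\partial_{uu}\lambda_N$ (resp.\ $(\partial_u\lambda_N)^2$) and whose remainder is controlled by $\ell/N$ times an $L^2$-norm of $\partial_{uuu}\lambda_N$ (resp.\ of $\partial_u\lambda_N\,\partial_{uu}\lambda_N$), both finite by Propositions~\ref{lem:Lip}--\ref{prop:lambdaN} and vanishing under Assumption~\ref{ass:eps} once divided by $N$. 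Step~(b): replace the mesoscopic average by $\overline\psi(\eta^{(\ell)}(x))$ using Lemma~\ref{lem:one-block}; since $\|\partial_{uu}\lambda_N\|_\infty$ and $\|(\partial_u\lambda_N)^2\|_\infty$ are uniformly bounded in $x$, one applies the one-block estimate after factoring out these prefactors (with a sign splitting if needed), which yields the contribution $\frac{1}{\gamma_0 N}\int_0^T \mc H_N(t)\d t + \varepsilon$ for any $\varepsilon>0$, $\ell\ge L_0$ and $N\ge N_0(\ell)$. Summing the two one-block applications gives the single $\frac{1}{\gamma_0}\int_0^T\mc H_N(t)\d t$ factor (after adjusting $\gamma_0$).

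Once these substitutions are made, the bracketed expressions algebraically reassemble into $\overline H(\eta^{(\ell)}(x),\rho_N(t,\tfrac{x}{N}))$ and $\overline G(\eta^{(\ell)}(x),\rho_N(t,\tfrac{x}{N}))$. The last step is to exchange $\rho_N(t,\tfrac{x}{N})$ for $\rho(t,\tfrac{x}{N})$ in the arguments of $\overline H$ and $\overline G$: since $\overline h$ and $\overline g$ are polynomial on $[0,1]$ and their derivatives are Lipschitz, $|\overline H(a,\rho_N)-\overline H(a,\rho)|$ and $|\overline G(a,\rho_N)-\overline G(a,\rho)|$ are bounded by a constant times $\|\rho_N-\rho\|_\infty$, which vanishes uniformly on $[0,T]\times\T$ by Proposition~\ref{prop:unif}; multiplying by $\|\partial_{uu}\lambda_N\|_\infty$ and $\|\partial_u\lambda_N\|_\infty^2$ (cf.\ Proposition~\ref{prop:lambdaN}) and summing over $x$ produces an extra error of order $N\,\|\rho_N-\rho\|_\infty \,(\eps_N)^{2-2m}$, which is absorbed into $\varepsilon_T(N,\ell)$ provided $\|\rho_N-\rho\|_\infty$ decays fast enough compared to $(\eps_N)^{2m-2}$ (which is where the finer quantitative version of Proposition~\ref{prop:unif} kicks in). All resulting errors collect into a single $\varepsilon_T(N,\ell)$ with $\limsup_\ell\limsup_N \varepsilon_T(N,\ell)/N = 0$.

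The hard part is the bookkeeping: keeping track of every error (the intrinsic $o(N)$ from Lemma~\ref{lem:entropy}, the linear replacement error $\varepsilon_{N,\ell}(T)$, the two summation-by-parts errors from step~(a), the two one-block errors from step~(b), and the $\rho_N\leftrightarrow \rho$ substitution error) and checking that each of them is compatible with Assumption~\ref{ass:eps}. Once this accounting is done, the inequality \eqref{eq:sum0}--\eqref{eq:sum2} follows directly.
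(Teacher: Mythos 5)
Your steps 1--3 track the paper's actual strategy faithfully: integrate Yau's inequality, apply Lemma~\ref{lem:entropy}, combine the two linear parts via~\eqref{eq:ident} into $\partial_t\lambda_N(\eta(x)-\rho_N)$, pay $\varepsilon_{N,\ell}(T)$ to pass from $\eta(x)$ to $\eta^{(\ell)}(x)$, and use the one-block estimate to pass from $\tau_x h$, $\tau_x g$ to $\overline h(\eta^{(\ell)}(x))$, $\overline g(\eta^{(\ell)}(x))$. That is exactly how the paper assembles Corollary~\ref{cor:replacement}.

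The problem is your step~4, where you try to swap $\rho_N(t,\tfrac{x}{N})$ for $\rho(t,\tfrac{x}{N})$ in the arguments of $\overline H$ and $\overline G$. This step is not only unsupported but impossible with the announced bound. You claim an error of size $N\,\|\rho_N-\rho\|_\infty\,(\eps_N)^{2-2m}$, which would be $o(N)$ only if $\|\rho_N-\rho\|_\infty = o\big((\eps_N)^{2m-2}\big)$. But wherever $\rho^{\mathrm{ini}}$ vanishes one has $\rho_N^{\mathrm{ini}}\ge\eps_N$, so already at $t=0$ one gets $\|\rho_N-\rho\|_\infty\ge \eps_N$, which is \emph{larger} than $(\eps_N)^{2m-2}$ for every $m\ge 2$. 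The ``finer quantitative version of Proposition~\ref{prop:unif}'' you appeal to does not exist and cannot exist in the form you need; that proposition is purely qualitative (Arzel\`a--Ascoli plus $L^1$-contraction). In addition, your estimate uses $\|\partial_{uu}\lambda_N\|_\infty$, whereas the paper only establishes $\sup_t\|\partial_{uu}\lambda_N(t,\cdot)\|_{L^2(\T)}$ in~\eqref{eq:lambdabound2}; no $L^\infty$ control on $\partial_{uu}\lambda_N$ is available.

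The step is also unnecessary: the $\rho$ appearing in the corollary's display is a slip, and should read $\rho_N$. Everything that produces the corollary (Lemma~\ref{lem:entropy}) and everything that consumes it (the subsequent application of the entropy inequality with respect to $\nu^N_{\rho_N(t,\cdot)}$, and Lemma~\ref{lem:largedev}) is formulated with $\rho_N(t,\tfrac{x}{N})$ inside $\overline H$ and $\overline G$; the passage from $\rho_N$ to $\rho$ is carried out once and for all at the very end, inside the proof of Theorem~\ref{theo:main}, after Corollary~\ref{cor:entropy} has already delivered $\mc H_N(t)=o(N)$. Drop step~4 and keep $\rho_N$ in the arguments of $\overline H$ and $\overline G$; then your reconstruction is correct and coincides with the paper's.
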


In this last paragraph we show that \eqref{eq:sum1} and \eqref{eq:sum2} are bounded from above by a constant times \eqref{eq:sum0}. We treat only \eqref{eq:sum1}, the same argument works for \eqref{eq:sum2}. Note that  applying the entropy inequality,  we can bound \eqref{eq:sum1} above by
\[
\frac{1}{\gamma} \int_0^T \mc H_N(t) \; \d t + \frac{1}{\gamma} \int_0^T \log \bb E_{\rho_N(t,\cdot)}^N\bigg[\exp\bigg\{\gamma \sum_{x\in\T_N}  \partial_{uu} \lambda_N\big(t,\tfrac{x}{N}\big) \overline H\big(\eta^{(\ell)}(x),\rho_N\big(t,\tfrac{x}{N}\big)\big)\bigg\}\bigg]\; \d t,
\]
for any $\gamma >0$. The first term will be added to \eqref{eq:sum0}. 
A large deviation argument will allow us to chose $\gamma >0$ such that the second term vanishes: 
\begin{lemma}[Large deviation estimate]\label{lem:largedev}
There exists $\gamma>0$ such that, for all $t\in[0, T]$,
\[
\limsup_{\ell \to \infty}\limsup_{N\to\infty} \frac{1}{N} \log \bb E_{\rho_N(t,\cdot)}^N\bigg[\exp\bigg\{\gamma \sum_{x\in\T_N}  \partial_{uu} \lambda_N\big(t,\tfrac{x}{N}\big) \overline H\big(\eta^{(\ell)}(x),\rho_N\big(t,\tfrac{x}{N}\big)\big)\bigg\}\bigg] \leqslant 0.
\]
\end{lemma}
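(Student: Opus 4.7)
The plan is to bound the exponential moment in Lemma~\ref{lem:largedev} by combining a second-order Taylor control on $\overline H$, a decoupling via Hölder's inequality over disjoint sub-lattices, and a sub-Gaussian concentration estimate for the box averages $\eta^{(\ell)}(x)$ under the product measure $\nu^N_{\rho_N(t,\cdot)}$.

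First, since $\overline h(\rho)=\rho^m$ is $C^2$ on $[0,1]$, the integral form of Taylor's remainder yields $0\le \overline H(a,b)\le \tfrac{m(m-1)}{2}(a-b)^2$ for all $a,b\in[0,1]$. Writing
\[
\eta^{(\ell)}(x)-\rho_N(t,\tfrac{x}{N}) \;=\; \widetilde\xi_x + \delta_x,
\]
with $\widetilde\xi_x:=\eta^{(\ell)}(x)-\bar\rho_x$ centered under the product measure (where $\bar\rho_x:=\bb E^N_{\rho_N(t,\cdot)}[\eta^{(\ell)}(x)]$) and $\delta_x:=\bar\rho_x-\rho_N(t,\tfrac{x}{N})$ deterministic, the Lipschitz bound~\eqref{eq:rho_Lip} gives $|\delta_x|\le C\ell\,\eps_N^{2-m}/N$. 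Using $(a+b)^2\le 2a^2+2b^2$ then splits the exponent into a deterministic contribution---bounded by $O(\gamma\ell^2\eps_N^{6-4m}/N^2)$ via Cauchy--Schwarz and $\|\partial_{uu}\lambda_N(t,\cdot)\|_{L^2}\le C\eps_N^{2-2m}$ from Proposition~\ref{prop:xxx}, which vanishes as $N\to\infty$ at fixed $\ell$ by Assumption~\ref{ass:eps}---and a random part involving $\widetilde\xi_x^2$.

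For the random part, I partition $\T_N$ into the $k=2\ell+1$ sub-lattices $\Lambda_j:=\{x\in\T_N:x\equiv j\!\!\pmod{k}\}$, on each of which the family $\{\eta^{(\ell)}(x)\}_{x\in\Lambda_j}$ depends on pairwise disjoint sites and is therefore independent under the product measure. Hölder's inequality with $k$ factors, combined with this independence, yields
\[
\tfrac{1}{N}\log\bb E^N_{\rho_N(t,\cdot)}\!\Big[\exp\Big(m(m-1)\gamma\sum_{x\in\T_N}|\partial_{uu}\lambda_N(t,\tfrac{x}{N})|\widetilde\xi_x^2\Big)\Big] \;\le\; \tfrac{1}{kN}\sum_{x\in\T_N}\log\bb E^N_{\rho_N(t,\cdot)}\big[e^{\beta_x\widetilde\xi_x^2}\big],
\]
with $\beta_x:=km(m-1)\gamma|\partial_{uu}\lambda_N(t,\tfrac{x}{N})|$. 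Since $\widetilde\xi_x$ is the average of $k$ centered independent $[-1,1]$-valued variables, Hoeffding's lemma makes it $(4k)^{-1}$-sub-Gaussian, whence $\bb E^N[e^{\beta\widetilde\xi_x^2}]\le(1-\beta/(2k))^{-1/2}$ for $\beta<2k$, i.e.\ $\log\bb E^N[e^{\beta_x\widetilde\xi_x^2}]\le\beta_x/(2k-\beta_x)$ whenever $\beta_x<k$.

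The main obstacle is that this sub-Gaussian estimate requires the pointwise smallness of $\gamma m(m-1)|\partial_{uu}\lambda_N(t,x/N)|$, whereas Proposition~\ref{prop:xxx} only provides an $L^2$-in-space bound $\|\partial_{uu}\lambda_N\|_{L^2}\le C\eps_N^{2-2m}$ which diverges as $N\to\infty$. To overcome this, I would fix a small threshold $\theta>0$ and split $\T_N$ into a bulk set $\Lambda^\theta_{\mathrm{bulk}}:=\{x:\rho_N(t,\cdot)\ge\theta\text{ on }[\tfrac{x-\ell}{N},\tfrac{x+\ell}{N}]\}$, on which $|\partial_{uu}\lambda_N|$ is bounded by some $C_\theta$ (using the identities of Lemma~\ref{prop:derivatives} together with $\|\partial_u\Pr_N\|_\infty\le C_{\rm Lip}$ and localized parabolic regularity for $\rho_N$ in the region where it stays above $\theta$), and its complement, whose relative density in $\T_N$ converges as $N\to\infty$ to $\mathrm{Leb}(\Gamma_t(\theta))$ and vanishes as $\theta\to 0$ by Proposition~\ref{lem:pos}. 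On the bulk set, the sub-Gaussian estimate applies for $\gamma$ small enough (depending on $C_\theta$); on the boundary set, I would combine the refined Taylor bound $\overline H(a,b)\le\tfrac{m(m-1)}{2}(a-b)^2\max(a,b)^{m-2}$ (which compensates the large values of $|\partial_{uu}\lambda_N|$ by the smallness of $\max(a,b)$ where $\rho_N$ is small) with a crude Chebyshev-type exponential estimate leveraging the vanishing density of boundary sites. Taking successively $N\to\infty$, $\ell\to\infty$, and $\theta\to 0$ then yields the claimed $\limsup\le 0$.
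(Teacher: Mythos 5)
Your outline follows the same skeleton as the argument the paper gestures at (Hölder decoupling over the $2\ell+1$ sublattices, independence of disjoint box averages, a second-order Taylor control of $\overline H$, and a concentration bound for $\eta^{(\ell)}(x)$). The paper's own proof is only a one-line pointer to \cite[Chapter 6]{kl} plus the remark that the Riemann-type convergence \eqref{eq:rieman-unif} handles replacing $\rho$ by $\rho_N$; it does not spell out any of these steps. In that sense your write-up is more explicit than the paper's, and you have correctly isolated the point at which a na\"ive transposition of the standard argument breaks down: the coefficient $\partial_{uu}\lambda_N(t,\cdot)$ is \emph{not} uniformly bounded in $N$ --- Proposition~\ref{prop:lambdaN} only gives an $L^2_u$ bound of size $\varepsilon_N^{2-2m}$, while the sub-Gaussian estimate on $\widetilde\xi_x^2$ requires the coefficient $k\gamma|\partial_{uu}\lambda_N|$ to stay below an order-$k$ threshold pointwise. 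The paper's sketch does not acknowledge this difficulty at all, so your diagnosis is a genuine contribution.

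However, your proposed fix has two unresolved gaps. First, the claimed pointwise bulk bound $|\partial_{uu}\lambda_N(t,u)|\le C_\theta$ on the set where $\rho_N(t,\cdot)\ge\theta$ near $u$ is \emph{not} a consequence of anything proved in the paper: Propositions~\ref{lem:Lip}--\ref{prop:lambdaN} give only $L^2$-in-$u$ controls on $\p_{uu}\Pr_N$, $\p_{uu}\rho_N$, $\p_{uu}\lambda_N$, and these controls are global in $u$, with constants diverging like negative powers of $\eps_N$; they do not localize to regions where $\rho_N$ is bounded below. The interior parabolic regularity you appeal to would indeed give pointwise estimates, but with a constant that depends on the size of a \emph{space-time} cylinder on which the equation is non-degenerate. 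Since the positivity set of the PME is expanding in $t$ (Lemma~\ref{lem:connectbis}), the spatial condition $\rho_N(t,\cdot)\ge\theta$ near $u$ does not preclude $\rho_N(s,u)\approx\eps_N$ at earlier times $s<t$, so for points that have recently been engulfed by the positivity set the parabolic constant blows up. At minimum one would need a quantitative version of the waiting-time / interface-speed estimates, uniformly in $N$, to make $C_\theta$ meaningful; none of this is in the paper. Second, your treatment of the boundary set is only a sketch (``combine the refined Taylor bound with a crude Chebyshev-type estimate''): the refined bound $\overline H(a,b)\le\tfrac{m(m-1)}{2}(a-b)^2\max(a,b)^{m-2}$ only helps when both $a=\eta^{(\ell)}(x)$ and $b=\rho_N(t,x/N)$ are small, but $a$ is random and can be of order $1$ on an event of probability $\approx\eps_N^{\,\ell}$; whether the resulting contribution to $\exp(k\gamma A_x W_x)$ is dominated by this small probability depends delicately on how large $A_x$ actually is, which brings one back to the pointwise control of $\partial_{uu}\lambda_N$. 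So while the bulk/boundary decomposition is in the right spirit (it mirrors the $G_t/Z_t/B_t$ split used for the one-block estimate in Section~\ref{ssec:one-block}), the present proposal does not close the argument, and you should flag the bulk bound on $\partial_{uu}\lambda_N$ as an unproven ingredient rather than as something that follows from what is already established.
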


\begin{proof}[Proof of Lemma \ref{lem:largedev}] We follow the lines of \cite[Chapter 6]{kl}, where the rough argument is well exposed and now standard. The main difference here consists in the presence of the approximate solution $\rho_N$ instead of $\rho$. A Riemann-type convergence like in \eqref{eq:rieman-unif} will be enough to conclude. 
\end{proof}

This concludes the proof of Proposition \ref{thm:entropy}. 

\section*{Acknowledgements}
{We thank Patr\'icia Gon\c calves, Claudio Landim, Cristina Toninelli and Kenkichi Tsunoda for helpful discussions. 

O.B.\ and M.S.\ are grateful to the University of Tokyo for its hospitality. O.B.\ acknowledges support from ANR-15-CE40-0020-03 grant LSD. C.C.\ and M.S.\ thank Labex CEMPI (ANR-11-LABX-0007-01), and C.C.\ is grateful to ANR-13-JS01-0007-01 (project GEOPOR) for their support. O.B.~and M.S.~thank INSMI (CNRS) for its support through the PEPS project ``D\'erivation et \'Etude Math\'ematique de l'\'Equation des Milieux Poreux'' (2016). M.S. was supported by JSPS Grant-in-Aid for Young Scientists (B) No. JP25800068. The research leading to the
present results benefited from the financial support of the seventh Framework Program of the European Union (7ePC/2007-2013), grant agreement n\textsuperscript{o}266638. This project has received funding from the European Research Council (ERC) under  the European Union's Horizon 2020 research and innovative programme (grant agreement  n\textsuperscript{o}715734). 
}

\appendix

\section{Connected components of the positivity set} \label{app:profile}

In this section we prove Lemma \ref{lem:connect}, more precisely:

\begin{lemma}[Connected components of the positivity set]\label{lem:connectbis}
Denote by $\mathcal I_t$ the set of the connected components of $\Pp_t$ for $t \geq 0$. 
Then one can build an injective mapping from $\mathcal I_t$ to $\mathcal I_s$ for all $t \geq s \geq 0$.
In particular, the function $t\mapsto \#\mathcal I_t$ is non-increasing. 
\end{lemma}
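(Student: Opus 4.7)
The plan is to exploit two classical properties of weak solutions to the porous medium equation: the \emph{retention of positivity}, namely $\mc P_s \subset \mc P_t$ whenever $0 \leq s \leq t$, and the \emph{finite speed of propagation}, which prevents positivity from nucleating in the interior of a region on which $\rho$ vanishes identically (see~\cite[Chapter~14]{vaz}).

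Granting these facts, I would first observe that every $J \in \mc I_s$ is a connected subset of $\mc P_s \subset \mc P_t$, hence is contained in a unique element of $\mc I_t$; this defines a natural map $\phi\colon \mc I_s \to \mc I_t$. The desired injective mapping $\iota\colon \mc I_t \to \mc I_s$ is then obtained by choosing, for each $I \in \mc I_t$, an arbitrary preimage $\iota(I) \in \phi^{-1}(I)$. Injectivity of $\iota$ is automatic: two distinct components of $\mc P_t$ are disjoint and therefore cannot share a common predecessor in $\mc I_s$.

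The crux of the argument is thus the surjectivity of $\phi$, i.e.~that every $I \in \mc I_t$ contains at least one component of $\mc P_s$. I would proceed by contradiction, assuming $I \cap \mc P_s = \varnothing$ and introducing the last time $s^\star \in [s,t)$ for which $\rho(s^\star, \cdot) \equiv 0$ on $\overline I$. The continuity of $\rho$ and the very definition of $s^\star$ force positivity to appear inside $I$ immediately above $s^\star$, and a careful analysis shows that this can happen only through one of three scenarios: (a)~a zero component detaches inside an existing component of $\mc P_{s^\star}$, which is forbidden by retention of positivity; (b)~a positive bubble nucleates at an interior point of $\mc Z_{s^\star}$, which is ruled out by comparing $\rho$ from above to a compactly supported Barenblatt-type super-solution centered at that point; (c)~a new component is generated at a point $v^\star$ of the free boundary $\Gamma_{s^\star}$. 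These are precisely the three impossible configurations depicted in Figures~\ref{fig:ps1}--\ref{fig:ps3}.

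The main obstacle will be case (c): unlike (a) and (b), no direct monotonicity or comparison argument suffices, and one must analyze the local geometry of the free boundary near $v^\star$. I expect the Lipschitz continuity of the pressure $\Pr$ furnished by Proposition~\ref{prop:regul-ast}, together with monotonicity of the interface in time, to prevent $\Gamma$ from splitting tangentially into two branches enclosing a fresh positive component, thereby ruling out case (c). Once $\phi$ is shown to be surjective, the section $\iota$ above yields the injective mapping $\mc I_t \hookrightarrow \mc I_s$; combined with $\#\mc I_0 < \infty$ (Assumption~\ref{ass:ini}), this proves Lemma~\ref{lem:connect}.
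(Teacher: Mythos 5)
Your reduction of the lemma to the surjectivity of the forward inclusion map $\phi\colon \mc I_s \to \mc I_t$ is sound, and the construction of the injective section $\iota$ is correct. However, your route to proving surjectivity is genuinely different from the paper's and, as written, has a real gap.

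The paper does \emph{not} run a qualitative classification of nucleation scenarios. It instead proves surjectivity directly by a mass-conservation argument inside each component. Fix $(a,b) \in \mc I_t$. By retention of positivity, $\rho(\tau,a)=\rho(\tau,b)=0$ for all $\tau \in [0,t]$. Plugging the test function $\xi(\tau,u)=\mathbf 1_{(s,t)}(\tau)\,\zeta_\eps(u)$ into the weak formulation~\eqref{eq:weak}, where $\zeta_\eps$ is a trapezoidal cutoff for $(a,b)$ with shoulders of width $\eps$, one obtains that the boundary flux terms are $\frac1\eps\int_s^t\big(\rho^m(\tau,a+\eps)+\rho^m(\tau,b-\eps)\big)\,\d\tau$. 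The Lipschitz continuity of the pressure $\Pr=\frac{m}{m-1}\rho^{m-1}$ (Proposition~\ref{prop:regul-ast}) then gives $\rho^m(\tau,a+\eps)\leq C\eps^{m/(m-1)}$, so these flux terms vanish as $\eps\to0$, yielding $\int_a^b\rho(s,u)\,\d u = \int_a^b\rho(t,u)\,\d u>0$. Continuity of $\rho(s,\cdot)$ together with $\rho(s,a)=\rho(s,b)=0$ then produces a component of $\mc P_s$ strictly inside $(a,b)$, and disjointness of the $(a,b)$'s gives injectivity.

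The gap in your version is case~(c), and you flag it yourself: you \emph{expect} the Lipschitz bound on $\Pr$ and monotonicity of the interface to rule out a tangential split of $\Gamma$ at $v^\star$, but you do not give an argument. This is precisely the hard case, and "a careful analysis shows" is doing a lot of unacknowledged work. Your treatment of case~(b) is also fragile: a Barenblatt profile centered \emph{at} $v^\star$ is positive there and is therefore not a useful super-solution for showing $\rho$ stays zero at $v^\star$; you would instead need a family of translated compactly supported super-solutions, or to invoke a finite-speed-of-propagation theorem in the form of a waiting-time estimate, which is a nontrivial input you haven't cited precisely. Finally, the exhaustiveness of the trichotomy (a)/(b)/(c) is asserted rather than established. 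Note also that under your own hypothesis $\rho(s^\star,\cdot)\equiv 0$ on $\overline I$ and the retention $\rho(\tau,a)=\rho(\tau,b)=0$ for all $\tau\leq t$, a point of $I$ cannot lie on $\Gamma_{s^\star}$ and positivity cannot cross the endpoints, so the only scenario that can actually occur in your contradiction setup is interior nucleation; your cases (a) and (c) do not arise. In short, the framework is salvageable but the execution leans on unproved PDE facts, whereas the paper's integral argument settles everything in a few lines using only the weak formulation and Lipschitz regularity of the pressure.
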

\begin{proof}
Let $t>0$ and let $(a,b) \in \mathcal I_t$, i.e., $\rho(t,a) = \rho(t,b) = 0$ and $\rho(t,u) >0$ for $u \in (a,b)$. 
Following \cite[Proposition 14.1]{vaz}, the mapping $t \mapsto \Pp_t$ is monotone:
\[
{\Pp_s} \subset {\Pp_t}, \qquad \text{for all } (s,t) \in [0,T]^2 \;\text{ such that}\; s \leq t. 
\]
This ensures that 
\be\label{eq:appB_1}
\rho(s,a) = \rho(s,b) = 0, \qquad \text{for any } s \in [0,t].
\ee
Since $0 \leq \rho \leq 1$ and \cm{the pressure $\Pr=\frac{m}{m-1}\rho^{m-1}$ is Lipschiz continuous (Proposition \ref{prop:regul-ast})}, 
the weak formulation~\eqref{eq:weak} still holds for test functions $\xi$ of the form 
$\xi(\tau,u) = \theta(\tau) \zeta(u)$ with $\theta \in L^1\cap BV(\R_+)$ and compactly supported, and $\zeta \in H^{1}(\bbT)$ thanks to the density of $\mc C^1([0,T])$ in $BV(0,T)$ and 
of $\mc C^1(\bbT)$ in $H^1(\bbT)$ for the respective weak-$\star$ and weak topologies. 
Here, $BV(\R_+)$ denotes the set of real valued functions of bounded variations on 
$\R_+$, i.e., functions $t \mapsto \theta(t)$ such that 
$\p_t \theta$ is a finite Radon measure on $\R_+$.
Fix $s \in [0,t)$ and $\eps \in (0, (b-a)/2)$, then choose $\xi = \theta \zeta_\eps$ with 
\[
\theta(\tau) = {\bf 1}_{(s,t)}(\tau) \qquad \text{and}\qquad 
\zeta_\eps(u) = \max\left(0, \min\left(1,\frac{u-a}{\eps}, \frac{b-u}{\eps}\right)\right)
\]
in the weak formulation~\eqref{eq:weak}. This provides 
\[
\int_\bbT \rho(s,u) \zeta_\eps(u) \d u = 
\int_\bbT \rho(t,u) \zeta_\eps(u) \d u + \int_s^t\frac1\eps \int_{a}^{a+\eps} \p_u \cm{(\rho^m)} \d u \d \tau 
- \int_s^t \frac1\eps \int_{b-\eps}^{b} \p_u\cm{(\rho^m)} \d u \d \tau.
\]
Using~\eqref{eq:appB_1}, one gets that 
\be\label{eq:appB_2}
\int_\bbT \rho(s,u) \zeta_\eps(u) \d u = 
\int_\bbT \rho(t,u) \zeta_\eps(u) \d u + \frac1\eps\int_s^t \left( \cm{\rho^m}(\tau, a+\eps)  + \cm{\rho^m}(\tau, b-\eps)\right) \d\tau.
\ee
It follows from \eqref{eq:appB_1} and from the Lipschitz continuity of $\Pr$ that there exists $C>0$ such that 
\begin{align*}
0 \leq  \rho^m(\tau, a+\eps) 	& = \big(\rho^{m-1}(\tau,a+\varepsilon)\big)^{\frac{m}{m-1}}\\
& = \big(\rho^{m-1}(\tau,a+\varepsilon)-\rho^{m-1}(\tau,a)\big)^{\frac{m}{m-1}}\leq C \eps^\frac{m}{m-1}, 
\end{align*}
and in the same way 
\[ 0 \leq  \rho^m(\tau, b-\eps) \leq C \eps^\frac{m}{m-1}.\]
These estimates together with the convergence of $\zeta_\eps$ in $L^1(\bbT)$ towards ${\bf 1}_{(a,b)}$ allow
to pass to the limit $\eps \to 0$ in~\eqref{eq:appB_2}, leading to 
\[
\int_a^b \rho(s,u)\d u = 
\int_a^b \rho(t,u) \d u >0, \qquad \text{for any } s \in [0,t].
\]
Since $\rho(s,\cdot)$ is continuous  and because of~\eqref{eq:appB_1}, 
this implies that there exists (at least) one interval $(\alpha, \beta) \subset (a,b)$ such that 
$\rho(s,\alpha) = \rho(s,\beta) = 0$ and $\rho(s,u) >0$ on $(\alpha, \beta)$. Such an interval $(\alpha, \beta)$ belongs to 
$\mathcal I_s$, and the mapping from $\mathcal I_t$ to $\mathcal I_s$ sending $(a,b)$ to $(\alpha, \beta)$ is injective. 
\end{proof}

\end{document}